\documentclass[11pt, draft]{amsart}[2000]

\usepackage{amsmath}
\usepackage{amssymb}
\usepackage{amsthm}  
\usepackage[all]{xypic}
\usepackage{enumitem}
\usepackage{mathtools}
\usepackage{color}
\usepackage{bbm}
\usepackage{tikz-cd}
\usepackage{soul}
\usepackage[all]{xy}
\usepackage{hyperref}

\usepackage[hmargin=2.5cm,vmargin=2cm]{geometry}

\theoremstyle{plain}

\newtheorem{Thm}{Theorem}[section]
\newtheorem{Cor}[Thm]{Corollary}
\newtheorem{Lemma}[Thm]{Lemma}

\newtheorem{Prop}[Thm]{Proposition}

\theoremstyle{definition}
\newtheorem{Def}[Thm]{Definition}

\newtheorem{Exl}[Thm]{Example}
\newtheorem{Rmk}[Thm]{Remark}

\numberwithin{equation}{section}

% Capital script letters

\renewcommand{\O}{{\mathcal{O}}}

%Roman letters for math

%Math capital letters

% Blackboard bold letters

%Gothic letters

%      Lower case bold letters

%      Text used in equations

%      Useful shortforms

%      Operators

\newcommand{\Aut}{\operatorname{Aut}}

\newcommand{\Gr}{\operatorname{Gr}}

\newcommand{\id}{{\operatorname{id}}}

\newcommand{\Span}{\operatorname{span}}

\DeclareMathOperator{\supp}{supp}

\DeclareMathOperator{\Prob}{Prob}

\begin{document}
	
	\title[Ratio-limit boundaries of relatively hyperbolic groups]{Ratio-limit boundaries for random walks on relatively hyperbolic groups}
	
	\author[A. Dor-On]{Adam Dor-On}
	\address{Department of Mathematics \\ University of Haifa \\ Haifa \\ Israel.}
	\email{adoron.math@gmail.com}

        \author[M. Dussaule]{Matthieu Dussaule}
	\address{Cogitamus Laboratory \\ France.}
	\email{matthieu.dussaule@hotmail.fr}
	
	\author[I. Gekhtman]{Ilya Gekhtman}
	\address{Department of Mathematics \\ Technion - IIT \\ Haifa \\ Israel.}
	\email{ilyagekh@gmail.com}

	\subjclass[2020]{Primary: 60J50, 20F67, 37A55. Secondary: 37B05, 47L80}
	\keywords{}
	
	\thanks{The first author was partially supported by EU grant HORIZON-MSCA-SE-2021 Project 101086394, and by an ISF Moked 713510 grant number 2919/19 for his stay at the Weizmann Institute of Science, Israel.}

	\begin{abstract}
We study boundaries arising from limits of ratios of transition probabilities for random walks on relatively hyperbolic groups. We extend, as well as determine significant limitations of, a strategy employed by Woess for computing ratio-limit boundaries for the class of hyperbolic groups. On the one hand we employ results of the second and third authors to adapt this strategy to spectrally non-degenerate random walks, and show that the closure of minimal points in $R$-Martin boundary is the unique smallest invariant subspace in ratio-limit boundary. On the other hand we show that the general strategy can fail when the random walk is spectrally degenerate and adapted on a free product. Using our results, we are able to extend a theorem of the first author beyond the hyperbolic case and establish the existence of a co-universal quotient for Toeplitz C*-algebras arising from random walks which are spectrally non-degenerate on relatively hyperbolic groups.
Finally, we exhibit an example of a relatively hyperbolic group carrying two random walks such that the ratio limit boundaries are not equivariantly homeomorphic and no two equivariant quotients of their respective Toeplitz C*-algebras are equivariantly $*$-isomorphic.
	\end{abstract}
	
%%%%%%%%%%%%%%%%%%%%%%%%%%%%%%%%
		
\maketitle

%%%%%%%%%%%%%%%%%%%%%%%%%%%%%%%%
\renewcommand{\O}{{\mathcal{O}}}
	
\section{Introduction}

\subsection*{Background}

A prominent line of research in geometric group theory is dedicated to finding appropriate definitions of boundaries on which a group acts, following examples of Fuchsian groups acting on the hyperbolic plane. Probabilistic approaches for defining and studying such boundaries were initiated by Furstenberg \cite{Fur71, Fur72}, going back to ideas of Martin and Poisson in classical harmonic analysis.

These approaches use random walks to define probabilistic boundaries obtained by considering different ways a random walker can ``converge at infinity". By a result of Varopoulos \cite{Var85} we know that the only groups that support recurrent random walks are virtually $\mathbb{Z}^d$ for $d =0,1,2$.  Hence, most random walks are transient, and it makes sense to define probabilistic compactifications as the set of points to which a random walk converges.

Throughout this paper we will consider a countable discrete group $\Gamma$ together with a probability measure $\mu$ whose support generates $\Gamma$ as a semigroup. We will call such a probability measure \emph{admissible}. This defines a Markov chain on $\Gamma$ given by the transition kernel $P(g,h)=\mu(g^{-1}h)$ which is called the $\mu$-random walk on $\Gamma$.
We denote by $P^n(g,h)=\mu^{*n}(g^{-1}h)$ the associated $n$-step transition probabilities. 

The \emph{weighted Green function} associated to $\mu$ is defined by
$$G(g,h | r)=\sum^\infty_{n=0}P^{n}(g,h)r^n $$ for $r\geq 1$ and $g,h \in \Gamma$.
This sum converges when $r <R$ where
$$R:=R(\mu):= \big(\limsup_{n\rightarrow \infty}\sqrt[n]{P^{n}(e,e)}\big)^{-1}$$ is the inverse of the spectral radius of the random walk. When $\Gamma$ is non-amenable, we know that $R>1$ (See \cite{Kes59}), and $G(g,h | R)<\infty$ for all $g,h$ in $\Gamma$ (see \cite[Theorem~7.8]{Woe00}). Thus, it makes sense to define the \emph{$r$-Martin kernel} $K_r: \Gamma \times \Gamma \rightarrow (0,\infty)$ at $r \in [1,R]$ (which is also well-defined as $r\rightarrow R$) by
$$
K_r(g,h) := \frac{G(g,h |r)}{G(e,h|r)}.
$$ 
The smallest compactification of $\Gamma$ to which the functions $h \mapsto K_r(g,h)$ extend continuously for all $g\in \Gamma$ is called the $r$-Martin compactification, and the complement of $\Gamma$ in it is called the $r$-Martin boundary $\partial_{M,r} \Gamma$. 

Martin boundaries are intimately related to discrete harmonic analysis on the group through the Martin--Poisson integral representation theorem. We will say that a function $u : \Gamma \rightarrow (0,\infty)$ is $t$-harmonic if
$$t \cdot u(x) = \sum_{y\in \Gamma} P(x,y)u(y).$$ The set $\mathcal{H}_1^+(P,t)$ of positive harmonic functions with $u(e) =1$ is a compact convex set with the topology of pointwise convergence, and the functions $x\mapsto K_r(x,\xi)$ are $r^{-1}$-harmonic whenever $\xi \in \partial_{M,r}\Gamma$. We will denote by $\partial^m_{M,r}\Gamma$ the points $\xi \in \partial_{M,r}\Gamma$ for which the function $x\mapsto K_r(x,\xi)$ is extreme in $\mathcal{H}_1^+(P,r^{-1})$. These are often called \emph{minimal} points of $\mathcal{H}_1^+(P,r^{-1})$, and the following version of Choquet's theorem shows how to get any function in $\mathcal{H}_1^+(P,r^{-1})$ from minimal such.

\begin{Thm}[Poisson--Martin integral representation]
Let $r\in [1,R]$, and $u$ a positive $r^{-1}$-harmonic function with $u(e)=1$. Then there is a representing probability measure $\nu^u$ on $\partial_{M,r}\Gamma$ such that
$$
u(x) = \int_{\partial_{M,r}\Gamma} K_r(x,\xi) d\nu^u(\xi),
$$
and $\nu^u$ is unique among probability measures $\nu$ that have full mass on $\partial^m_{M,r}\Gamma$. 
\end{Thm}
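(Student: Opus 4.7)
The plan is to follow the classical Choquet-theoretic approach to the Martin integral representation, adapted to the $r^{-1}$-harmonic setting, with a Doob-type reduction handling the uniqueness statement.

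First I would check that $\mathcal{H}_1^+(P, r^{-1})$ is a compact, convex, metrizable subset of $\bbR^\Gamma$ with the pointwise-convergence topology. Convexity and metrizability (the latter from countability of $\Gamma$) are immediate. For compactness, iterating the eigenequation gives $u(e) = r^n \sum_y P^n(e,y) u(y) \geq r^n P^n(e,x) u(x)$ for every $n$; since $\mu$ is admissible one may choose $n$ with $P^n(e,x) > 0$, yielding a uniform pointwise upper bound on every $u \in \mathcal{H}_1^+(P, r^{-1})$ in terms of $u(e)=1$. Thus $\mathcal{H}_1^+(P, r^{-1})$ embeds into a product of bounded intervals and is compact.

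Next I would identify the extreme points of $\mathcal{H}_1^+(P, r^{-1})$ with $\partial^m_{M,r}\Gamma$ through the map $\xi \mapsto K_r(\cdot, \xi)$. By construction of the $r$-Martin compactification, every pointwise accumulation point of the normalised Green functions $K_r(\cdot, h_n)$ as $h_n \to \infty$ in $\Gamma$ is of the form $K_r(\cdot, \xi)$ for some $\xi \in \partial_{M,r}\Gamma$, and these limits are $r^{-1}$-harmonic thanks to the identity $\sum_z P(x,z) G(z, y \mid r) = r^{-1}(G(x, y \mid r) - \delta_{x,y})$ valid for $x \neq y$. A standard approximation argument (writing $u(x) = \lim_n r^n \sum_y P^n(x,y) u(y)$ and regrouping the sum over cylinder sets) shows every element of $\mathcal{H}_1^+(P, r^{-1})$ is a pointwise limit of convex combinations of Martin kernels $K_r(\cdot, y)$ with $y \in \Gamma$, so the extreme points of $\mathcal{H}_1^+(P, r^{-1})$ lie in $\{K_r(\cdot, \xi) : \xi \in \partial_{M,r}\Gamma\}$; by definition the corresponding $\xi$ form $\partial^m_{M,r}\Gamma$. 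Choquet's theorem applied to the compact metrizable convex set $\mathcal{H}_1^+(P, r^{-1})$ then yields a representing probability measure supported on the extreme points, which transports to a probability measure $\nu^u$ on $\partial^m_{M,r}\Gamma \subseteq \partial_{M,r}\Gamma$ satisfying the claimed formula.

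The main obstacle is the uniqueness statement, which amounts to showing that $\mathcal{H}_1^+(P, r^{-1})$ is a Choquet simplex. The cleanest strategy is to reduce to the case $r = 1$ via Doob's $h$-transform: fix a positive $r^{-1}$-harmonic function $h_0$ with $h_0(e) = 1$ and consider the transformed kernel $\tilde{P}(x,y) = r \, h_0(y) P(x,y) / h_0(x)$, which is genuinely stochastic. The multiplication $u \mapsto u/h_0$ carries $\mathcal{H}_1^+(P, r^{-1})$ bijectively onto the classical base $\mathcal{H}_1^+(\tilde P, 1)$ and intertwines the respective Martin kernels, sending minimal points to minimal points. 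The simplex property for $\mathcal{H}_1^+(\tilde P, 1)$ is classical: it follows either from the Riesz lattice structure on the cone of positive superharmonic functions, or probabilistically from the triviality of the tail $\sigma$-algebra of the Doob-transformed walk at each minimal point (so that the representing measure is given explicitly by the hitting distribution on $\partial^m_{M, r}\Gamma$ of the $h_0$-transformed walk). Uniqueness then transports back to the original $r$-setting.
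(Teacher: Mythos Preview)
The paper does not supply its own proof of this theorem: it is stated in the introduction as the classical Poisson--Martin integral representation, and in Section~\ref{s:rw-rh} the result is invoked with an explicit reference to \cite[Section~24]{Woe00}, with uniqueness attributed to \cite[Corollary~24.18]{Woe00}. So there is no in-paper argument to compare against.

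Your outline is correct and is one of the standard routes. A minor comment: the treatment in \cite{Woe00} that the paper cites is not phrased via Choquet's theorem but rather via the probabilistic Fatou-type argument---one Doob-transforms by $u$, shows the resulting Markov chain converges a.s.\ in the Martin compactification, and takes the hitting distribution as $\nu^u$; uniqueness then comes from the characterisation of minimal harmonic functions as those whose Doob-transformed chain has trivial tail. Your Choquet approach reaches the same conclusion and uses the Doob transform only at the uniqueness step (to import the simplex/lattice property from the $r=1$ case). Both arguments are classical; the Choquet route is slightly more abstract, while the probabilistic route gives the explicit identification of $\nu^u$ as a hitting measure, which is occasionally useful elsewhere. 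Either would be an acceptable justification here since the paper only needs the statement.
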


There are several different notions of boundaries for discrete groups in the literature, and it is useful and interesting to know when they coincide or are different. For instance, the Martin boundary may \textit{a priori} depend on both $r$ and $\mu$, and it is useful to know when this is the case.
For hyperbolic groups, it was shown by Ancona \cite{Anc88} that for a large class of random walks on them, the $1$-Martin boundary can be identified with the Gromov boundary $\partial \Gamma$ of $\Gamma$. Ancona's results extend readily to the $r$-Martin boundary for $r<R$, and Gouezel \cite{Gou14} extended them to $r=R$ when $\mu$ is symmetric. This opens avenues for using random walks to study geometric properties of the group. Among several applications, Ancona inequalities found use in giving a new proof of the Baum-Connes conjecture for hyperbolic groups \cite{HM11}, and for constructing natural flow with unique KMS stats on the crossed product C*-algebra $C(\partial \Gamma) \rtimes \Gamma$ \cite{Nic19}.

Such emerging interactions between group theory and operator algebra theory are not mere coincidence. The subject of operator algebras is a branch of functional analysis sharing deep connections with other areas of pure Mathematics and Physics. The study focuses on subalgebras of operators on a complex Hilbert space, that are closed under appropriate topologies. Recent applications of operator algebraic techniques to representation theory of infinite discrete groups \cite{KK17, BKKO17} stem from studying non-commutative boundaries of non-self-adjoint operator algebras in the sense of Arveson \cite{Arv69, DK13}. These works eventually led to the resolution of open problems in group theory \cite{lB17}, exciting new techniques in stationary dynamics \cite{HK+} and progress in ergodic theory of lattices in semisimple Lie groups \cite{BH21, BBHP22}.

\medskip
One of the main motivations for our work comes from a central question in the theory of C*-algebras, of the existence of co-universal quotients for various kinds of Toeplitz C*-algebras. Co-universality is realized in various forms in C*-algebra theory, and some of its early manifestation are due to Cuntz and Krieger \cite{CK80,Cu81} in their works on C*-algebras associated to topological Markov chains. The uniqueness of such co-universal quotients is often known in the literature as \emph{uniqueness theorems}, including the so-called \emph{gauge-invariant uniqueness theorem} as prototypical example. Later developments include co-universal quotients for Toeplitz-Cuntz-Pimsner C*-algebras \cite{Pim95, Kat04}, as well as more recent works on co-universal quotients for Toeplitz algebras of product systems \cite{CLSV11, DK20, DKKLL22}. In these works, state-of-the art techniques for establishing the existence of such co-universal quotient C*-algebras rely heavily on non-commutative boundary theory.

The class of Toeplitz C*-algebras we are concerned with arise from \emph{subproduct systems}, which were initially studied in the work of Shalit and Solel \cite{SS09}. Such Toeplitz C*-algebras behave very differently from those arising from product systems. For instance, the question of co-universality does not lend itself available to non-commutative boundary techniques (see \cite[Corollary 3.16]{DOM16}), and the existence of a co-universal quotient usually requires adding additional symmetries (see \cite[Example 2.3]{Vis12}). Thus, completely new techniques are often necessary in order to prove the existence of a natural co-universal quotient in various scenarios (see for instance \cite{AK21, HN+}). By using deep results from the theory of random walks \cite{Gou14, Woe21}, the first author established the existence of a natural co-universal quotient for Toeplitz C*-algebras arising from random walks, when the random walks are symmetric, aperiodic, and on non-elementary hyperbolic groups \cite[Corollary 5.2]{DO21}. 

As part of the proof of co-universality in \cite{DO21}, a new probabilistic boundary called the \emph{ratio-limit boundary} was discovered from computing quotients of Toeplitz C*-algebras arising from random walks. The ratio-limit boundary $\partial_{\rho}\Gamma$ is defined similarly to $r$-Martin boundary, where one replaces $r$-Martin kernel $K_r$ by the ratio limit kernel $H$ given by
$$H(x,y)=\lim \frac{P^n(x,y)}{P^n(e,y)}.$$

It is not known whether these limits exist in general, and when they do we say that the random walk determined by $\mu$ has the \emph{strong ratio-limit property} (SRLP). It is known that SRLP holds for random walks on nilpotent groups \cite{Mar66}, symmetric random walks on amenable groups \cite{Ave73}, symmetric random walks on non-elementary hyperbolic groups \cite{Gou14}, and certain large classes of random walks on relatively hyperbolic groups \cite{Dus22b, DPT+}.

The key insight in \cite{DO21} was that, under the assumption of SRLP, the question of existence of a co-universal quotient is equivalent to showing the existence of a unique closed minimal $\Gamma$-invariant subspace of ratio-limit boundary $\partial_{\rho}\Gamma$. In a companion paper, Woess \cite{Woe21} studied ratio limit boundary and showed that when $\Gamma$ is hyperbolic, ratio-limit boundary coincides with Gromov boundary $\partial \Gamma$. This was used as the last missing piece for the proof for co-universality in \cite{DO21}, and gives new applications of Ancona inequalities at $r=R$ beyond probability theory.

\medskip
Many questions were left open in \cite{DO21} and \cite{Woe21} and we will address some of them here in the wider context of \emph{relatively hyperbolic groups}. Relatively hyperbolic groups are a generalization of hyperbolic groups which exhibit \emph{some} hyperbolic behavior while allowing for arbitrary subgroups. This includes arbitrary free products of finitely generated groups, as well as fundamental groups of finite volume manifolds with pinched negative scalar curvature.

The class of relatively hyperbolic groups is a rich source of examples that show how various properties depend on the underlying random walk and not on the group itself. For instance, Cartwright \cite{Car88, Car89} disproved a conjecture of Gerl by exhibiting a relatively hyperbolic group carrying two finitely supported, admissible and symmetric random walks with distinct local limit theorems.
Also, Woess \cite{Woe86} and the second and third authors \cite{DG21} proved that the homeomorphism type of the $R$-Martin boundary may depend on the underlying random walk on such groups.

Ancona's results have been extended to relatively hyperbolic groups. In particular, the third author together with Gerasimov, Potyagailo and Yang \cite{GGPY21} proved a modified version of Ancona's deviation inequalities for the Green function at $r=1$, for finitely supported and admissible random walks on relatively hyperbolic groups, and used it to show that the identity map on $\Gamma$ extends to a continuous surjection from the $1$-Martin boundary to the Bowditch boundary of the group, such that the pre-images of conical points under this surjection are singletons. For relatively hyperbolic groups with respect to \emph{virtually abelian} parabolic subgroups, the second and third authors together with Gerasimov and Potyagailo \cite{DGGP20} identified the Martin boundary precisely. They showed that the preimage of any parabolic point is a sphere of dimension $d-1$ where $d$ is the rank of the stabilizer subgroup. 

Ancona deviation inequalities were extended to the $r$-Martin boundary $r\leq R$ by the second and third authors \cite{DG21}, and this allowed them to prove that the Martin boundary covers the Bowditch boundary in this setting as well. However, for $r=R$, even when the parabolic subgroups are virtually abelian, the homeomorphism type of the Martin boundary is more delicate and depends on certain spectral properties of the random walk. More precisely, the measure $\mu$ is \emph{spectrally degenerate} along a parabolic subgroup $P$ if the spectral radius of first return time Markov chain on $P$ induced from the rescaled measure $R\mu$ has spectral radius $1$ and \emph{spectrally non-degenerate} along $P$ otherwise. Roughly, $\mu$ is spectrally degenerate along $P$ when it is ``largely weighted" on $P$ from the point of view of the random walk (see Definition \ref{defspectraldegeneracy}). We call $\mu$ spectrally non-degenerate if it is spectrally non-degenerate along every parabolic subgroup.

\subsection*{Main results}
Our goal in this paper is to study ratio limit behaviour for random walks on relatively hyperbolic groups, and extend the existence of a co-universal quotient for Toeplitz C*-algebras to spectrally non-degenerate random walks on relatively hyperbolic groups.

Our first results is in Section \ref{s:min-strong-prox} show that if $\Gamma$ is a non-elementary relatively hyperbolic group, and $\mu$ is a finitely supported, symmetric and admissible probability measure, then $\Gamma \curvearrowright \overline{\partial_{M,r}^m \Gamma}$ is strongly proximal. This is made possible by the existence of the $\Gamma$-surjection from $r$-Martin boundary onto Bowditch boundary established in \cite{GGPY21}, which allows us to lift this property to $\overline{\partial_{M,r}^m \Gamma}$ as well. Density of pre-images of conical points in $\overline{\partial_{M,r}^m \Gamma}$, as well as minimality of $\Gamma \curvearrowright \overline{\partial_{M,r}^m \Gamma}$, which were proved in \cite[Corollary 1.6]{GGPY21} for $r=1$, but their proof carries mutatis-mutandis for any $r\in [1,R]$.

Spectral non-degeneracy of the random walk will play an important role in our paper, and in Section \ref{s:spect-non-deg} we show that spectrally non-degenerate random walks are ubiquitous on relatively hyperbolic groups. In fact, in Proposition \ref{propconstructionnonspecdeg} we show that spectrally non-degenerate adapted random walks exist on arbitraty non-elementary free products and in Proposition \ref{propsmallhomogeneousdimension} we show that symmetric random walks are automatically spectrally non-degenerate whenever parabolic subgroups are virtually nilpotent of homogeneous dimension at most $4$.

For $r\in [1,R]$, $s\in \mathbb{N}$ and $x,y\in \Gamma$ we denote
$$
I^{(s)}(x,y|r) := \sum_{x_1,...,x_s \in \Gamma} G(x,x_1|r)G(x_1,x_2|r) ... G(x_s,y|r).
$$
One of the key reductions used by Woess in the hyperbolic case to identify ratio-limit boundary with Gromov boundary was to show that 
\begin{equation} \label{eq:woessq}
H(x,y) = \lim_{r\rightarrow R}\frac{I^{(1)}(x,y|r)}{I^{(1)}(e,y|r)}.
\end{equation}
In \cite[Question 7.1(a)]{Woe21} Woess asked under what general conditions does equation \eqref{eq:woessq} hold. In Section \ref{s:asymp} we shed considerable light on this question in the relatively hyperbolic context. In Corollary~\ref{c:ratio-limit-iterated-sums} we provide sufficient conditions for when $H(x,y) = \lim_{r\rightarrow R}\frac{I^{(s)}(x,y|r)}{I^{(s)}(e,y|r)}$ where $\Gamma$ is relatively hyperbolic, and $s \in \mathbb{N}$ is the smallest for which the $s$-derivative $G^{(s)}(x,y|r)$ diverges as $r\rightarrow R$ for some (all) $x,y \in \Gamma$. This leads us to Proposition~\ref{p:limit-harmonic} where necessary conditions are provided for $H(x,y) = \lim_{r\rightarrow R}\frac{I^{(s)}(x,y|r)}{I^{(s)}(e,y|r)}$ to hold, and provides us with the following example.

\vspace{6pt}

\noindent {\bf Example \ref{exampleallderivativefinite}} \emph{There exists a finitely supported, admissible, symmetric and adapted random walk on a free products for which the validity of equation \eqref{eq:woessq} fails. In fact, $H(x,y) = \lim_{r\rightarrow R}\frac{I^{(s)}(x,y|r)}{I^{(s)}(e,y|r)}$ can fail for \emph{any} $s$ for an adapted random walk on a free product.
}

\vspace{6pt}

Examples of this kind, and the necessary conditions in Proposition \ref{p:limit-harmonic}, show the limitation of the original strategy employed by Woess in \cite{Woe21}. It shows that new techniques are necessary for studying ratio-limit boundaries, even for adapted random walks on free products.

\medskip
Despite these aforementioned limitations, the strategy employed by Woess in \cite{Woe21} can still be carried out for many random walks on relatively hyperbolic groups, including spectrally non-degenerate random walks. In Section \ref{s:main} we employ results of the second author from \cite{Dus22a, Dus22b} to show that for a spectrally non-degenerate random walk on a relatively hyperbolic group $\Gamma$, for a point $\xi \in \partial_{M,R}\Gamma$ whose image in Bowditch boundary is conical, we have that $\frac{H(x,y)}{K_R(x,y)} \rightarrow 1$ as $y\rightarrow \xi$. This allows us show in Corollary \ref{cor:embedding} that closure of minimal points $\overline{\partial^m_{M,R}\Gamma}$ embeds inside ratio-limit boundary $\partial_{\rho}\Gamma$ via a bi-Lipschitz $\Gamma$-equivariant map. The main result of Section \ref{s:main} is Theorem \ref{t:essent-min} which shows that $\Gamma \curvearrowright \partial_{\rho}\Gamma$ has a unique smallest closed $\Gamma$-invariant subspace, which is $\Gamma$-homeomorphic to $\overline{\partial^m_{M,R}\Gamma}$. 

\vspace{6pt}

\noindent {\bf Theorem \ref{t:essent-min}} \emph{
Let $\Gamma$ be a non-elementary relatively hyperbolic group and let $\mu$ be a finitely supported, symmetric, admissible and aperiodic probability measure on $\Gamma$. Assume further the $\mu$-random walk is spectrally non-degenerate.
Then, there is a bi-Lipschitz $\Gamma$-equivariant map $i:\overline{\partial^m_{M,r}\Gamma}\to \partial_\rho\Gamma$.
The action on the ratio-limit boundary $\Gamma \curvearrowright \partial_{\rho} \Gamma$ is essentially minimal, i.e. it has a unique smallest closed $\Gamma$-invariant subspace which coincides with the action $\Gamma \curvearrowright \overline{\partial^m_{M,R}\Gamma}$.}

\vspace{6pt}

We expect that most of our results hold without assuming that the random walk is spectrally non-degenerate. Although the geometric arguments are really more difficult to handle, we can still prove that some of our results holds for certain convergent random walks on the free product $\mathbb{Z}^5 * \mathbb{Z}$. This allows us to show that there are two distinct random walks on $\mathbb{Z}^5 * \mathbb{Z}$ for which the associated ratio-limit boundaries are not equivariantly homeomorphic.

\vspace{6pt}

\noindent {\bf Example \ref{ex:non-iso-rl-bndry}} \emph{
There exist two finitely supported, symmetric, admissible and adapted probability measures $\mu$ and $\mu'$ on $\Gamma:=\mathbb{Z}^5 * \mathbb{Z}$ such that $\partial_{\rho}(\Gamma,\mu)$ and $\partial_{\rho}(\Gamma,\mu')$ are not equivariantly homeomorphic.}

\vspace{6pt}

Theorem \ref{t:essent-min} is then leveraged in Section \ref{s:co-univ} to answer the co-universality question for Toeplitz C*-algebras arising from spectrally non-degenerate random walks on relatively hyperbolic groups. More precisely, in Theorem \ref{t:co-universalquotient} where we show that the Toeplitz algebra $\mathcal{T}(\Gamma,\mu)$ has a a unique smallest $\Gamma\times \mathbb{T}$ equivariant quotient which coincides $C(\overline{\partial^m_{M,R} \Gamma} \times \mathbb{T}) \otimes \mathbb{K}(\ell^2(\Gamma))$.

\vspace{6pt}

\noindent {\bf Theorem \ref{t:co-universalquotient}} \emph{
The Toeplitz algebra $\mathcal{T}(\Gamma,\mu)$ has a a unique smallest $\Gamma\times \mathbb{T}$ equivariant quotient which coincides with $C(\overline{\partial^m_{M,R} \Gamma} \times \mathbb{T}) \otimes \mathbb{K}(\ell^2(\Gamma))$.}

\vspace{6pt}

This provides a positive answer to the first part of \cite[Question 5.6]{DO21} for spectrally non-degenerate random walks on relatively hyperbolic groups. As an application of co-universality applied to the random walks in Example \ref{ex:non-iso-rl-bndry}, we show in Example \ref{ex:no-gamma-t-quotient-iso} that no two $\Gamma \times \mathbb{T}$-equivariant quotients of Toeplitz C*-algebras, one of $\mathcal{T}(\Gamma,\mu)$ and one of $\mathcal{T}(\Gamma,\mu')$, are $\Gamma \times \mathbb{T}$-equivariantly isomorphic.

Our work provides first examples of a finitely generated group for which the the ratio-limit boundary, as well as the Toeplitz C*-algebra and its equivariant quotients depend on the finitely supported, symmetric and admissible random walk. We emphasize that such examples cannot be found among hyperbolic groups, because in that situation both the Martin boundary and the ratio limit boundary coincide with the Gromov boundary. This brings new impetus for the study of ratio limit boundaries on relatively hyperbolic groups.

\subsection*{Acknowledgements} The authors are grateful to Alon Dogon, Victor Gerasimov and Leonid Potyagailo for comments, remarks and suggestions on previous versions of this paper.

%%%%%%%%%%%%%%%%%%%%%%%%%%%%%%%%
	
\section{Random walks and relatively hyperbolic groups.} \label{s:rw-rh}

In this section we discuss necessary material from the the theory of random walks and the theory of relatively hyperbolic groups. We refer the reader to \cite{Woe00, Woe09, Woe21} for detailed treatments of topological boundary theory for random walks, and to \cite{Bow99, Bow12, Qui+} for some of the basic theory of relatively hyperbolic groups.

\begin{Def}
Let $\Gamma$ be a discrete group, and $\mu : \Gamma \rightarrow [0,1]$ a probability measure such that $\supp(\mu)$ generates $\Gamma$ as a semigroup. We call such a probability measure \emph{admissible}. We will say that $\mu$ is \emph{finitely supported} if the set of $g\in \Gamma$ for which $\mu(g)>0$ is finite. The transition kernel $P$ on $\Gamma$ given by $P(x,y) = \mu(x^{-1}y)$ is called the \emph{random walk} on $\Gamma$ determined (or driven) by $\mu$.
\end{Def}

The iterates of $P$ are given by $P^{n}(x,y) = \mu^{*n}(x^{-1}y)$ where $\mu^{*n}$ is the $n$-th convolution power of $\mu$. We denote by $R:= R(P)$ the inverse of the spectral radius of $P$ given by
$$R(P):= \big(\limsup_{n\rightarrow \infty}\sqrt[n]{P^{n}(x,y)}\big)^{-1},$$
which is independent of the choice of $x,y\in G$. By a theorem of Kesten \cite{Kes59}, we know that $R>1$ whenever $G$ is non-amenable.

\subsection*{Martin boundaries and $t$-harmonic functions}
One way of measuring the behavior of a random walk at infinity arises from considering compactifications of $\Gamma$ with respect to Green functions. Let $\mu$ be a finitely supported and admissible probability measure on $\Gamma$. The Green function of $\mu$ for $x,y\in G$ is given by
$$
G(x,y|r) := \sum_{n=0}^{\infty} P^{n}(x,y) r^n,
$$
with radius of convergence $R$. The \emph{$r$-Martin kernel} $K_r: \Gamma \times \Gamma \rightarrow (0,\infty)$ at $r \in [1,R]$ is then given by
$$
K_r(x,y) := \frac{G(x,y|r)}{G(e,y|r)},
$$
and is well-defined even for $r= R$ by \cite[Lemma 3.66]{Woe09}. Then, for any $r\in [1, R]$, the compactification of $\Gamma$ (see for instance \cite[Theorem 7.13]{Woe09}) with respect to the $r$-Martin kernel functions $\{y \mapsto K_r(x,y)\}_{x\in \Gamma}$ defines the so-called \emph{$r$-Martin compactification} $\Delta_{M,r}\Gamma$, so that $y \mapsto K_r(x,y)$ extend to a continuous functions on $\Delta_{M,r}\Gamma$.

In fact, since for every $x\in \Gamma$ we have $C_x>0$ so that $K_r(x,y) \leq C_x$, the $r$-Martin compactification $\Delta_{M,r}\Gamma$ can also be obtained by completing the bounded metric $d_{M,r}$ on $\Gamma$ given by
\begin{equation}\label{defmetricMartin}
    d_{M,r}(y,y') = \sum_{x\in \Gamma} \frac{|K_r(x,y) - K_r(x,y')| + |1_{y} - 1_{y'}|}{2^{\phi(x)}C_x},
\end{equation}
where $1_y$ is the characteristic function of $y$, and $\phi : \Gamma \rightarrow \mathbb{N}$ is some bijection.

The action of $\Gamma$ on itself by left multiplication extends to an action by homeomorphisms $\Gamma\curvearrowright \Delta_{M,r}\Gamma$, and the \emph{$r$-Martin boundary} is then the closed $\Gamma$-invariant subspace $\partial_{M,r} \Gamma = \Delta_{M,r}\Gamma \setminus \Gamma$. A sequence $g_n \in \Gamma$ converges to a point $\xi \in \partial_{M,r} \Gamma$ if $g_n$ is outside any finite set and $\lim_n K_r(x,g_n) = K_r(x,\xi)$.

The $r$-Martin compactification is intimately related to $r^{-1}$-harmonic functions. We refer to \cite[Section 24]{Woe00} for the theory of harmonic functions, which is done for general Markov chains (and not just random walks). Most of the literature deals with the case when $r=1$. To obtain results for the case when $r\in (1, R]$, one often applies a \emph{Doob transform} or \emph{$u$-process} to $P$ via some positive $r^{-1}$-harmonic function $u$ to reduce to the case when $r=1$ (see \cite[Page 261]{Woe00}).

We say that a positive function $u : \Gamma \rightarrow (0,\infty)$ is \emph{$t$-harmonic} if we have $t \cdot u(x) = \sum_{y\in \Gamma}P(x,y)u(y)$ for every $x\in \Gamma$. We say that $u$ is \emph{normalized} if $u(e) =1$, and we denote by $\mathcal{H}^+(P,t)$ the set of all positive normalized $t$-harmonic functions for $P$, which we imbue with the topology of pointwise convergence.

The key attribute of $\partial_{M,r} \Gamma$ is that every normalized positive $r$-harmonic function $u$ on $\Gamma$ has a Poisson-Martin integral representation
\begin{align} \label{eq:martin-rep}
u(x) = \int_{\partial_{M,r} \Gamma} K_r(x,\xi) d\nu^u(\xi),
\end{align}
where $\nu^u$ is a Borel probability measure on $\partial_{M,r} \Gamma$. We call a function $u$ in $\mathcal{H}^+(P,t)$ \emph{minimal} if it cannot be written as a convex combination of two distinct functions in $\mathcal{H}^+(P,t)$. The \emph{minimal $r$-Martin boundary} $\partial_{M,r}^m \Gamma$ is the Borel set of points $\xi \in \partial_{M,r} \Gamma$ for which $x \mapsto K(x,\xi)$ is minimal, or equivalently those points $\xi$ for which $x \mapsto K(x,\xi)$ has a unique representing measure, which must hence be a Dirac measure $\delta_{\xi}$. By \cite[Corollary~24.18]{Woe00} we know that the measure $\nu^u$ in equation \eqref{eq:martin-rep} would be the unique representing measure for $u$ if we require that $\nu^u(\partial_{M,r}^m \Gamma) = 1$. 

When $\mu$ is finitely supported and admissible, by \cite[Lemma 24.16]{Woe00} the positive normalized functions $x \mapsto K_r(x,\xi)$ are $r^{-1}$-harmonic for $\xi \in \partial_{M,r}\Gamma$, so that whenever $\nu \in \Prob(\partial_{M,r}\Gamma)$, the function 
$$
u_{\nu}(x) = \int_{\partial_{M,r} \Gamma} K_r(x,\xi) d\nu(\xi)
$$
is in $\mathcal{H}^+(P,r^{-1})$. Let us imbue $\Prob(\partial_{M,r}\Gamma)$ with the weak* topology, which is the smallest topology such that for every continuous function $f:\partial_{M,r}\Gamma \to \mathbb R$ the map $\nu\mapsto \int f(\xi)d\nu(\xi)$ is continuous. Note that this is often called the weak topology on $\Prob(\partial_{M,r}\Gamma)$ in literature on probability. Define a map $\varphi : \Prob(\partial_{M,r} \Gamma) \rightarrow \mathcal{H}^+(P,r^{-1})$ given by $\varphi(\nu) = u_{\nu}$.
Since for every $x\in \Gamma$ the function $K_r(x,\xi)$ is continuous on $\partial_{M,r} \Gamma$, it is easy to see that $\varphi$ is continuous. That is, if $\nu_{\alpha} \rightarrow \nu$ in the weak* topology, then $u_{\nu_{\alpha}}$ converges to $u_{\nu}$ pointwise. Since every $u \in \mathcal{H}^+(P,r^{-1})$ has some representing measure $\nu^u$, we see that $\varphi$ is a continuous surjective map.

\subsection*{Ratio-limit boundaries}

Suppose now that the random walk defined by $\mu$ on $\Gamma$ is aperiodic in the sense that there exists $n_0$ such that $\mu^{*n}(e) > 0$ for any $n\geq n_0$, and let $\omega$ be a non-principle ultrafilter on $\mathbb{N}$. By analogy with $R$-Martin kernel functions, for $x,y\in \Gamma$ we may define the \emph{ratio-limit kernel} $H: \Gamma\times \Gamma \rightarrow (0,\infty)$ by
\begin{equation} \label{eq:rl}
H(x,y) := \lim_{m \rightarrow \omega} \frac{P^{m}(x,y)}{P^{m}(e,y)}.
\end{equation}

By a result of Gerl \cite{Ger73, Ger78}, we know that $\lim_{m\rightarrow \infty}\frac{P^{m+1}(x,z)}{P^m(x,z)} = R^{-1}$, so that the function $x\mapsto H(x,y)$ is $R^{-1}$-harmonic for every $y\in \Gamma$.

The compactification of $\Gamma$ with respect to the ratio-limit kernel functions $\{y \mapsto H(x,y)\}_{x\in \Gamma}$ defines the ratio-limit compactification $\Delta_{\rho} \Gamma$. Just like Martin compactifications, so too can the ratio-limit compactification $\Delta_{\rho} \Gamma$ be obtained by completing a bounded metric. Indeed, for every $x\in \Gamma$ we have $D_x>0$ so that $H(x,y) \leq D_x$, and the compactification $\Delta_{\rho} \Gamma$ is obtained by completing the metric $d_{\rho}$ on $\Gamma$ given by
\begin{equation}\label{defmetricratio}
d_{\rho}(y,y') = \sum_{x\in \Gamma} \frac{|H(x,y) - H(x,y')| + |1_{y} - 1_{y'}|}{2^{\phi(x)}D_x},
\end{equation}
where $1_y$ is the characteristic function of $y$, and $\phi : \Gamma \rightarrow \mathbb{N}$ is some bijection.

Again the left multiplication of $\Gamma$ on itself extends to an action $\Gamma\curvearrowright \Delta_{\rho} \Gamma$ by homeomorphisms, and the closed $\Gamma$-invariant subspace $\partial_{\rho} \Gamma := \Delta_{\rho} \Gamma \setminus \Gamma$ is called the \emph{ratio-limit boundary}. A sequence $g_n \in \Gamma$ converges to a point $\xi \in \partial_{\rho} \Gamma$ if $g_n$ is outside any finite set and $\lim_n H(x,g_n) = H(x,\xi)$. 

We will say that the random walk determined by $\mu$ has the \emph{strong ratio-limit property} (SRLP) if the following limits exist for every $x,y\in \Gamma$.
\begin{equation*}
H(x,y) := \lim_{m \rightarrow \infty} \frac{P^{m}(x,y)}{P^{m}(e,y)}.
\end{equation*}

\begin{Rmk}
Note that when the random walk determined by $\mu$ has SRLP, all of the above defined notions are independent of the non-principle ultrafilter $\omega$. It is still unknown to us whether the limits always exist as $m \rightarrow \infty$, and whether the definition of ratio-limit boundary is independent of the choice of non-principle ultrafilter.
\end{Rmk}

Since $x\mapsto H(x,y)$ is $R^{-1}$-harmonic for every $y\in \Gamma$ and the random walk we consider is finitely supported and  admissible, it follows that $H(x,\eta)$ is $R^{-1}$-harmonic for every $\eta \in \partial_{\rho}\Gamma$. Hence, by Martin--Poisson representation theorem, for every $\eta \in \Delta_{\rho} \Gamma$ there is a probability measure $\nu^{\eta} \in \Prob(\partial_{M,R}\Gamma)$ such that
$$
H(x,\eta) = \int_{\partial_{M,R}\Gamma} K_R(x,\xi)d\nu^{\eta},
$$
which is unique if we require that $\nu^{\eta}(\partial_{M,R}^m \Gamma) = 1$.

Since distinct elements of $\Delta_{\rho} \Gamma$ may give rise to the same ratio-limit kernels, it makes sense to discuss a ``separated" version of the ratio-limit compactification and boundary.

\begin{Def} \label{d:ratio-limit-radical}
Suppose $\Gamma$ is a discrete group, and $\mu$ an aperiodic probability measure on $\Gamma$. Let $\omega$ be a non-principle ultrafilter on $\mathbb{N}$. We define the ratio-limit radical as the subset
$$
R_{\mu}^{\omega} := R_{\mu}:= \{ \ g\in \Gamma \ | \ H(x,g) = H(x,e), \ \forall x \in \Gamma \ \}.
$$
\end{Def}

In other words, $R_{\mu}$ is the largest subset of elements $g \in \Gamma$ of all $R^{-1}$-harmonic functions $x\mapsto H(x,g)$ that coincide with the $R^{-1}$-harmonic function $x\mapsto H(x,e)$. When $\mu$ is symmetric, $g\in R_{\mu}$ if and only if for every $y \in \Gamma$ we have $H(g,y) = H(g,e)$. In this case, the next result improves upon \cite[Proposition 3.2]{DO21}, showing that $R_{\mu}$ is always a \emph{normal} subgroup.

\begin{Prop}
Let $\Gamma$ be discrete, $\mu$ an aperiodic probability measure on $\Gamma$, and $\omega$ is an non-principle ultrafilter. Then $R_{\mu}$ is a subgroup. If moreover $\mu$ is symmetric, then $R_{\mu}$ is normal.
\end{Prop}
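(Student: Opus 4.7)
The plan is to realize $R_\mu$ as a stabilizer of a naturally distinguished function, and to use the symmetric alternative characterization to upgrade stabilizer to normal subgroup.

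\medskip

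For the subgroup part, I would not work directly with the characterization $H(x,g) = H(x,e)$, but rather interpret $R_\mu$ dynamically. Let $\Gamma$ act on the set of positive functions $f : \Gamma \to (0,\infty)$ with $f(e)=1$ by $(g \cdot f)(x) := f(g^{-1}x)/f(g^{-1})$. One checks this is a left action. The key computation is that
$$
(g \cdot H(\cdot, y))(x) = \frac{H(g^{-1}x, y)}{H(g^{-1}, y)} = \lim_{m \to \omega} \frac{\mu^{*m}(x^{-1} g y)/\mu^{*m}(y)}{\mu^{*m}(g y)/\mu^{*m}(y)} = H(x, gy),
$$
so $g \cdot H(\cdot, y) = H(\cdot, gy)$. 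Taking $y = e$ gives $g \in R_\mu$ iff $g$ stabilizes $H(\cdot, e)$, realizing $R_\mu = \mathrm{Stab}_{\Gamma}(H(\cdot,e))$, which is automatically a subgroup.

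\medskip

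For normality under symmetry, fix $g \in R_\mu$ and $h \in \Gamma$. By the symmetric alternative characterization recalled just before the statement, it suffices to show $H(hgh^{-1}, y) = H(hgh^{-1}, e)$ for every $y$. Equivalently, writing $\tilde L_z := \lim_{m \to \omega} \mu^{*m}(z)/\mu^{*m}(e)$ and using symmetry $\tilde L_z = \tilde L_{z^{-1}}$, the two equivalent conditions on $g \in R_\mu$ are
$$
\tilde L_{yg} = \tilde L_y \tilde L_g \quad\text{and}\quad \tilde L_{gy} = \tilde L_g \tilde L_y \qquad \text{for all } y \in \Gamma,
$$
and the same biliteral multiplicativity is what we want to establish for $hgh^{-1}$. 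From the above identities applied to $g$ and $g^{-1}$ (both in $R_\mu$ by the first part) together with $\tilde L_g \tilde L_{g^{-1}}=\tilde L_e = 1$ and $\tilde L_g = \tilde L_{g^{-1}}$, one first deduces $\tilde L_g = 1$ for every $g \in R_\mu$. Then one may hope to establish the invariance $\tilde L_{x\cdot hgh^{-1} \cdot z} = \tilde L_{xz}$ by inserting a copy of $g g^{-1}$ and moving $g$ to the boundary of the word via a combination of right-multiplicativity at $g$, symmetry of $\mu^{*m}$, and left-multiplicativity at $g^{-1}$, so that the $h$ and $h^{-1}$ cancel in pairs.

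\medskip

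The hardest step, and the main obstacle, is precisely that the $g$ inside $hgh^{-1}$ is sandwiched between $h$ and $h^{-1}$, while the multiplicative identities $\tilde L_{wg} = \tilde L_w\tilde L_g$ and $\tilde L_{gw} = \tilde L_g\tilde L_w$ only let one peel off factors of $g$ from the extreme left or extreme right of a word. Direct syntactic manipulations repeatedly reduce the desired identity back to itself, so the symmetry assumption $\mu(z)=\mu(z^{-1})$ must be used in a genuinely non-formal way. I expect the argument to combine the two characterizations of $R_\mu$ by playing the first characterization on cleverly chosen test elements of the form $h^{-1}xh$, and then using $\tilde L_z=\tilde L_{z^{-1}}$ to convert the resulting ``conjugated'' limits into the bilateral multiplicativity required of $hgh^{-1}$, bypassing the need to literally move $g$ past $h$.
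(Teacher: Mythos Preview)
Your subgroup argument is correct and is a clean repackaging of the paper's approach: the stabilizer interpretation $R_\mu=\mathrm{Stab}_\Gamma\big(H(\cdot,e)\big)$ under the action $(g\cdot f)(x)=f(g^{-1}x)/f(g^{-1})$ is exactly equivalent to the paper's direct use of the cocycle identity $H(gh,z)=H(h,g^{-1}z)H(g,z)$, which is precisely what underlies your computation $g\cdot H(\cdot,y)=H(\cdot,gy)$.

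Your normality argument, however, is not a proof but a plan with an acknowledged gap. You correctly isolate the obstacle --- the multiplicative identities for $\tilde L$ only let you peel $g$ off the ends of a word, not out of the middle of $hgh^{-1}$ --- but you do not resolve it. The paper's key device is a \emph{second} cocycle identity, available under symmetry:
\[
H(gh,z)=H(g,zh^{-1})\,H(h,z).
\]
This gives right-side peeling in addition to the left-side peeling coming from the first cocycle. Using both, the paper computes directly
\[
H(x,h^{-1}gh)=\frac{H(hx,gh)}{H(h,gh)}=H(hxh^{-1},g)=H(hxh^{-1},e),
\]
and then unwinds with the first cocycle and the homogeneity relation $H(h^{-1},h^{-1})H(h,e)=1$ to obtain $H(x,h^{-1}gh)=H(x,e)$. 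Your proposal never identifies this second identity, which is exactly the mechanism that moves $g$ ``past $h$''; the hoped-for manipulation of inserting $gg^{-1}$ and using symmetry of $\tilde L$ does not produce it, and without it (or an equivalent substitute) the argument remains circular, as you yourself observe.
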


\begin{proof}
First note that we have the cocycle identity $H(gh,z) = H(h,g^{-1}z)H(g,z)$ for $h,g,z\in \Gamma$.
Hence, if $g,h\in R_{\mu}$ we get that
$$
H(x,gh) = \frac{H(g^{-1}x,h)}{H(g^{-1},h)} = \frac{H(g^{-1}x,e)}{H(g^{-1},e)} = H(x,g) = H(x,e)
$$
and
$$
H(x,g^{-1}) = \frac{H(gx,e)}{H(g,e)} = \frac{H(gx,g)}{H(g,g)} = H(x,g^{-1}g) = H(x,e).
$$
Thus, $R_{\mu}$ is a subgroup. Next, if $\mu$ is symmetric, we have another cocycle identity given by $H(gh,z) = H(g,zh^{-1})H(h,z)$ for $h,g,z\in \Gamma$. Hence, if $g \in R_{\mu}$ and $h\in \Gamma$, we also have
$$
H(x,h^{-1}gh) = \frac{H(hx,gh)}{H(h,gh)} = \frac{H(hxh^{-1},g) / H(h^{-1},g)}{H(hh^{-1},g) / H(h^{-1},g)} = H(hxh^{-1},g) =
$$
$$
H(hxh^{-1},e) = H(xh^{-1},h^{-1})H(h,e) = H(x,e)H(h^{-1},h^{-1})H(h,e).
$$
But now, using homogeneity we get that $H(h^{-1},h^{-1})H(h,e) = 1$, and we obtain $H(x,h^{-1}gh) = H(x,e)$. Hence, $h^{-1}gh \in R_{\mu}$, and we see that $R_{\mu}$ is normal.
\end{proof}

Next, note that the function $(x,yR_{\mu}) \mapsto H(x,y)$ is well-defined on $\Gamma \times \Gamma/R_{\mu}$, and we may use them to define the \emph{reduced ratio-limit compactification} $\Delta_{\rho}^r \Gamma$. This is obtained by completing the space $\Gamma/R_{\mu}$ with respect to the metric $d_{\rho}^r$ on $\Gamma/R_{\mu}$ given by
$$
d_{\rho}^r(yR_{\mu},y'R_{\mu}) = \sum_{x\in \Gamma} \frac{|H(x,y) - H(x,y')|}{2^{\phi(x)}D_x},
$$
where $\phi : \Gamma \rightarrow \mathbb{N}$ is some bijection. Left multiplication of $\Gamma$ on $\Gamma / R_{\mu}$ then extends to an action $\Gamma \curvearrowright \Delta_{\rho}^r\Gamma$ by homeomorphisms, and the closed $\Gamma$-invariant subspace given by $\partial_{\rho}^r \Gamma : = \Delta_{\rho}^r \Gamma \setminus [\Gamma/R_{\mu}]$ is called the \emph{reduced ratio-limit boundary}.

By \cite[Lemma 6.3]{Woe21} there is a $\Gamma$-equivariant continuous surjection $\Delta_{\rho} \Gamma \rightarrow \Delta_{\rho}^r \Gamma$ which is injective on $\partial_{\rho} \Gamma$. Thus, $R^{-1}$-harmonic functions arising from compactifying via ratio-limit kernels, are parameterized by elements of $\Delta_{\rho}^r \Gamma$. Since in this paper we will be mostly concerned with non-amenable groups, the following corollary shows that we may always assume that $\partial_{\rho}^r \Gamma$ is non-empty. 

\begin{Cor}
Let $\Gamma$ be discrete, $\mu$ a \emph{symmetric} aperiodic probability measure on $\Gamma$, and $\omega$ a non-principle ultrafilter. Then $\partial^r_{\rho} \Gamma = \emptyset$ if and only if $\Gamma$ is amenable.
\end{Cor}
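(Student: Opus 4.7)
My plan is to establish the biconditional via the chain
\[\partial^r_\rho \Gamma = \emptyset \iff [\Gamma:R_\mu]<\infty \iff \Gamma \text{ is amenable}.\]

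For the first equivalence, I would first note that the metric $d^r_\rho$ is totally bounded on $\Gamma/R_\mu$ (since each kernel $H(x,\cdot)$ is bounded by $D_x$), so the completion $\Delta^r_\rho\Gamma$ is always a compact metric space. Assuming $\partial^r_\rho \Gamma = \emptyset$, the coset space $\Gamma/R_\mu$ itself is a countable compact metric space, so the Baire category theorem produces an isolated point; since $\Gamma$ acts transitively on $\Gamma/R_\mu$ by homeomorphisms (via the left-multiplication action extended from $\Gamma$), every point is then isolated and $\Gamma/R_\mu$ is necessarily finite. The converse is immediate, as a finite metric space is complete.

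For the second equivalence, I would exploit the function $u(x):=H(x,e)$, which is $R^{-1}$-harmonic on $\Gamma$. The cocycle identity
\[H(sx,e)=H(x,s^{-1})\,H(s,e),\]
together with the defining property $H(z,s^{-1})=H(z,e)=u(z)$ valid for $s^{-1}\in R_\mu$, gives the multiplicativity $u(sx)=u(s)u(x)$ on $R_\mu\times\Gamma$. Specializing $x=s^{-1}$ forces $u(s)u(s^{-1})=1$, and symmetry of $\mu$ yields $u(s^{-1})=u(s)$, so $u\equiv 1$ on $R_\mu$. Consequently $u$ is left-$R_\mu$-invariant and descends to the coset space $R_\mu\backslash\Gamma$; if $[\Gamma:R_\mu]<\infty$ it attains both its maximum $M$ at some $x^*$ and its minimum $m$ at some $x_*$. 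Evaluating the harmonicity equation at these points gives
\[R^{-1}M=\sum_h \mu(h)\,u(x^*h)\leq M \qquad\text{and}\qquad R^{-1}m=\sum_h \mu(h)\,u(x_*h)\geq m,\]
forcing $R=1$, so $\Gamma$ is amenable by Kesten's theorem. For the reverse implication, I would invoke Avez's theorem \cite{Ave73}, which guarantees SRLP with ratio-limit kernel $H\equiv 1$ for symmetric admissible aperiodic random walks on amenable groups; this forces $R_\mu=\Gamma$.

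The main obstacle I anticipate is the cocycle-plus-symmetry manipulation that identifies $u\vert_{R_\mu}\equiv 1$, since this is exactly what makes $u$ take only finitely many values and enables the max/min principle to yield $R=1$. This step crucially combines the subgroup structure of $R_\mu$ from the preceding proposition with the symmetry of $\mu$.
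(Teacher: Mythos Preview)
Your argument is correct, and the overall skeleton matches the paper's: both directions pass through the finiteness of $\Gamma/R_\mu$, and the converse is Avez's theorem in both cases. For the forward direction you supply more detail than the paper does on why $\partial^r_\rho\Gamma=\emptyset$ forces $[\Gamma:R_\mu]<\infty$ (the paper simply asserts this), and your Baire/transitivity reasoning is fine.

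Where you genuinely diverge is after establishing $u\equiv 1$ on $R_\mu$. The paper observes, via $H(g,g)=H(g,e)$ and symmetry, that $\lim_n P^n(e,g)/P^n(e,e)=1$ for every $g\in R_\mu$, and then invokes the Elder--Rogers theorem \cite{ER19} (a refinement of Avez) asserting that the set of such $g$ is an amenable subgroup; since $R_\mu$ sits inside it and has finite index in $\Gamma$, amenability follows. You instead exploit the left-$R_\mu$-invariance of $u$ to see that the positive $R^{-1}$-harmonic function $u$ takes only finitely many values, and a max/min principle forces $R=1$, whence Kesten gives amenability. Your route is more elementary in that it avoids the external input \cite{ER19} and uses only Kesten's theorem; the paper's route, on the other hand, isolates the conceptually interesting fact that $R_\mu$ itself is amenable (not merely that $\Gamma$ is), which your argument does not yield directly.
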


\begin{proof}
Suppose that $\partial^r_{\rho} \Gamma = \emptyset$. Then we must have that $\Gamma/R_{\mu}$ is finite. Since for each $g\in R_{\mu}$ we have $H(g,g) = H(g,e)$, and as $\mu$ is symmetric, we get that
$$
\lim_n \frac{P^n(e,g)}{P^n(e,e)} = \lim_n \frac{P^n(e,e)}{P^n(e,g)},
$$
which must hence be equal to $1$. The set of elements $g\in \Gamma$ satisfying $\lim_n \frac{P^n(e,g)}{P^n(e,e)} = 1$ was shown to be an amenable subgroup by \cite[Theorem~4.1 \& Theorem~4.2]{ER19}. Thus, we deduce that $R_{\mu}$ is amenable. Since $R_{\mu}$ is amenable and $\Gamma/R_{\mu}$ is finite, we get that $\Gamma$ is amenable.

Conversely, if $\Gamma$ is amenable and $\mu$ is symmetric, a result of Avez \cite{Ave73} shows that $H(x,y)=1$ for all $x,y\in \Gamma$. Thus, $\Gamma= R_{\mu}$, and hence $\partial_{\rho}^r \Gamma = \emptyset$.
\end{proof}

\subsection*{Relatively hyperbolic group and their intrinsic geometry}
Let $s$ be an isometry on a hyperbolic space $X$. We will say that $s$ is hyperbolic if it has two fixed points $x^\pm_s$ on $\partial X$. Such an isometry then has North-South dynamics on $\partial X$, in the sense that $s^{\pm n} x\underset{n\rightarrow \infty}{\to} x^\pm_s$ for any $x\in \partial X \setminus x^\mp_s$.

Suppose now that $\Gamma$ is a non-elementary, finitely generated group of isometries of some proper, geodesic hyperbolic space $X$. We denote by $\Lambda_\Gamma$ the closed set of limit points of $\Gamma x$ in the Gromov boundary $\partial X$ for some element $x\in X$ (whose definition does not depend on $x$), or equivalently the unique minimal closed $\Gamma$ invariant subset of $\partial X$.   Let $s \in \Gamma$ be hyperbolic as an isometry on $X$. It is well-known that the set of all attractors and repellers of hyperbolic elements
$$\{ \ x_s^{\pm} \ | \ s \in \Gamma, \ \text{hyperbolic} \ \}$$
is dense in $\Lambda_\Gamma$, and that $\Gamma \curvearrowright \Lambda_\Gamma$ is minimal (see for instance \cite[Proposition 4.7]{Qui+}).

Let $\Gamma$ be a finitely generated discrete group and $\Omega$ a finite collection of subgroups. We construct the coned-off (sometimes called "relative")  Cayley graph $\Gr(\Gamma;\Omega)$ as follows. Consider the usual Cayley graph $\Gr(\Gamma)$, and for each $P\in \Omega$ and coset $gP$ we add a vertex $gP$ and an edge of length $\frac{1}{2}$ from $h$ to $gP$ for any $h\in gP$. The resulting graph is $\Gr(\Gamma;\Omega)$, and the shortest path metric makes $\Gr(\Gamma;\Omega)$ into a geodesic metric space. We will call the shortest path metric on $\Gr(\Gamma)$ the \emph{word metric} and the shortest path metric on $\Gr(\Gamma;\Omega)$ the \emph{relative metric}. A geodesic in $\Gr(\Gamma)$ will be called a \emph{word geodesic}, and a geodesic in $\Gr(\Gamma,\Omega)$ will be called a \emph{relative geodesic}.

\begin{Def}
Let $\Gamma$ be a finitely generated discrete group and $\Omega$ a finite collection of subgroups. We say that $\Gamma$ is \emph{hyperbolic relative to $\Omega$} if
\begin{enumerate}
\item $\Gr(\Gamma;\Omega)$ is $\delta$ hyperbolic for some $\delta>0$

\item For each $L \in \mathbb{N}$, any edge $e$ in $\Gr(\Gamma;\Omega)$ belongs to at most finitely many simple cycles of length $L$.

\end{enumerate}
\end{Def}

We let $\partial \Gr(\Gamma,\Omega)$ be the Gromov boundary of $\Gr(\Gamma,\Omega)$ and we let $V_\infty(\Gr(\Gamma,\Omega))$ be the set of vertices with infinite degree in $\Gr(\Gamma,\Omega)$.
We also set $\partial_B(\Gamma;\Omega)=\partial \Gr(\Gamma,\Omega)\cup V_\infty(\Gr(\Gamma,\Omega))$.
In \cite[Section~8]{Bow12}, Bowditch defines a topology on $\Gamma\cup \partial_B(\Gamma;\Omega)$ such that the induced topology on $\Gr(\Gamma,\Omega)\cup \partial\Gr(\Gamma,\Omega)$ is the visual topology on the Gromov completion of a hyperbolic space  \cite[Proposition~8.5]{Bow12} and such that $V_\infty(\Gr(\Gamma,\Omega))$ is dense in $\partial_B(\Gamma;\Omega)$.
The space $\partial_B(\Gamma;\Omega)$ is compact \cite[Proposition~8.6]{Bow12} and is called the Bowditch boundary of $\Gamma$.

The action of $\Gamma$ on $\partial_B(\Gamma;\Omega)$ is minimal and geometrically finite. That is, it is a convergence action (i.e. the action on the space of discrete triples is properly discontinuous) and every point in $\partial_B(\Gamma;\Omega)$ is either conical or bounded parabolic, where $\xi \in \partial_B(\Gamma;\Omega)$ is

\begin{enumerate}
\item  a \emph{conical point} if there is a sequence $(g_n)$ in $\Gamma$ and two distinct points $\xi_1,\xi_2 \in \partial_B(\Gamma;\Omega)$ such that for any $\xi \neq \zeta \in \partial_B(\Gamma;\Omega)$, the sequences $(g_n\xi)$ and $(g_n \zeta)$ converge to $\xi_1$ and $\xi_2$ respectively, or;

\item a \emph{bounded parabolic point} if the stabilizer $\Gamma_{\xi}$ is infinite and acts cocompactly on $\partial_B(\Gamma;\Omega)\setminus \{\xi\}$.
\end{enumerate} 

Relative hyperbolicity and Bowditch boundary can also be identified abstractly as follows. Equivalently, we say that $\Gamma$ is relatively hyperbolic if there is some proper, geodesic hyperbolic space $X$ on which $\Gamma$ acts by isometries on $X$, geometrically finitely on $\partial X$, and the maximal parabolic subgroups (i.e. infinite stabilizers of points of $\partial X) $ are conjugate to elements of $\Omega$. An infinite order element $g$ in a relatively hyperbolic group $\Gamma$ is then called hyperbolic if it is a hyperbolic isometry on a space $X$ as above. The limit set $\Lambda_\Gamma$ in the Gromov boundary of such a space is then unique up to $\Gamma$-equivariant homeomorphism \cite[Theorem~9.4]{Bow12}, and coincides with the Bowditch boundary $\partial_B(\Gamma;\Omega)$ \cite[Proposition~9.1]{Bow12}.
Under this identification, $\partial\Gr(\Gamma,\Omega)$ is the set of conical limit points and $V_\infty(\Gr(\Gamma,\Omega))$ is the set of bounded parabolic limit points.
It follows from this definition that $x_s^{\pm}$ are conical for every hyperbolic element $s\in \Gamma$, and that there are only countably many bounded parabolic points in $\partial_B(\Gamma;\Omega)$.

Let $\alpha$ be a word geodesic of a relatively hyperbolic group $\Gamma$. A point $p\in \alpha$ is said to be an $(\epsilon_1,\epsilon_2)$ transition point if the length $\epsilon_2$ sub-segment of $\alpha$ centered at $p$ is not contained in an $\epsilon_1$ neighborhood of a single coset of a parabolic subgroup.
When $\Gamma$ is relatively hyperbolic, by \cite[Proposition~8.13]{Hru10} we know that relative geodesics are uniformly close to geodesics in $\Gr(\Gamma)$ with respect to the Hausdorff distance. More precisely, there exists $C>0$ such that the following holds for any two $x,y\in \Gamma$.
Inside $\Gr(\Gamma)$, the set of points in $\Gamma$ on a relative geodesic from $x$ to $y$ is within Hausdorff distance $C$ of the set of transition points on a geodesic from $x$ to $y$.

By \cite[Lemma 2.20]{Yang} we know there exist $\epsilon_1, \epsilon_2 >0$ such that for every conical $x \in \partial_B (\Gamma;\Omega)$ and any word geodesic ray $\gamma$ converging to $x$ there is a sequence of $(\epsilon_1,\epsilon_2)$ transition points $p_n\in \gamma$ converging to $x$. Moreover, by \cite[Lemma 2.4]{DGdrift}  (which itself is a simple corollary of \cite[Lemma 1]{Karlsson}) there exists $D>0$ such that for any triangle whose sides are word geodesic segments or rays with vertices in $\Gamma\cup \partial_B (\Gamma;\Omega)$, any $(\epsilon_1,\epsilon_2)$ transition point on one side is within $D$ of an $(\epsilon_1,\epsilon_2)$ transition point on one of the other two. We will need the following lemma for what is to follow.

\begin{Lemma} \label{l:tpointsr}
There are $\epsilon_1,\epsilon_2>0$ such that for all $g\in \Gamma$ there is a conical $x \in \partial_B \Gamma$ such that some word geodesic $[e,x]$ contains an $(\epsilon_1,\epsilon_2)$ transition point within $2\epsilon_2$ of $g$. 
\end{Lemma}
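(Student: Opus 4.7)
The strategy is to apply the triangle transition-point comparison lemma \cite[Lemma 2.4]{DGdrift} to an ideal geodesic triangle with vertex $e$ whose opposite side passes close to $g$, with that opposite side constructed by conjugating a fixed loxodromic element of $\Gamma$ by $g$.

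Fix once and for all a loxodromic $s \in \Gamma$, which exists because $\Gamma$ is non-elementary relatively hyperbolic. Its fixed points $x_s^\pm \in \partial_B(\Gamma;\Omega)$ are conical, and the orbit $\mathcal{A}_s := \{s^n\cdot e : n \in \bZ\}$ is a biinfinite quasi-geodesic in the word metric on $\Gamma$ with endpoints $x_s^\pm$. Because $s$ is loxodromic rather than parabolic, $\mathcal{A}_s$ does not linger inside any single coset of a parabolic subgroup. Combining this with Hruska's Proposition 8.13 to pass between a word geodesic and the transition-point skeleton of the axis quasi-geodesic, we can fix $\epsilon_1,\epsilon_2>0$ so that any word geodesic from $x_s^-$ to $x_s^+$ carries $(\epsilon_1,\epsilon_2)$ transition points with uniformly bounded gaps; in particular, such a word geodesic contains a transition point within some uniform word distance $D_0$ of $e$.

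Given $g \in \Gamma$, left-translate the whole picture by $g$: the conjugate $s_g := gsg^{-1}$ is loxodromic with conical fixed points $gx_s^\pm \in \partial_B(\Gamma;\Omega)$, and its axis $g\mathcal{A}_s$ passes through $g$. By $\Gamma$-equivariance of the setup, any word geodesic $\beta$ from $gx_s^-$ to $gx_s^+$ admits an $(\epsilon_1,\epsilon_2)$ transition point $p$ within word distance $D_0$ of $g$. Form the ideal geodesic triangle with vertices $e, gx_s^-, gx_s^+$ and word-geodesic sides $\beta$, $\gamma_+ := [e,gx_s^+]$, and $\gamma_- := [e,gx_s^-]$. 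By \cite[Lemma 2.4]{DGdrift}, the transition point $p$ lies within $D$ of an $(\epsilon_1,\epsilon_2)$ transition point $q$ on $\gamma_+$ or on $\gamma_-$. Setting $x$ equal to whichever of $gx_s^\pm$ is the endpoint of that side, we obtain a conical $x$ and a word geodesic $[e,x]$ carrying an $(\epsilon_1,\epsilon_2)$ transition point within word distance $D_0 + D$ of $g$. Arranging at the outset that $\epsilon_2$ is large enough so that $2\epsilon_2 \geq D_0 + D$ (while still keeping the bounded-gap statement above, which holds for all sufficiently large $\epsilon_2$ given a suitable $\epsilon_1$) finishes the proof.

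The main obstacle is the uniform bounded-gap claim for $(\epsilon_1,\epsilon_2)$ transition points along word geodesics with endpoints $x_s^\pm$, since this is what makes $D_0$ independent of $g$. Once this is established $\Gamma$-equivariantly for the axis of $s$, left-translation by $g$ carries it verbatim to the configuration around $g$, and the rest is a direct application of the two quoted triangle-comparison lemmas from Yang and from Karlsson.
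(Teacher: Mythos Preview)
Your approach is genuinely different from the paper's, which is a short measure-theoretic argument: Yang's \cite[Lemma~5.2]{Yang} shows that for suitable $\epsilon_1,\epsilon_2,D'$ the shadow-type set $\Omega_{D',\epsilon_1,\epsilon_2}(g)$ has positive Patterson--Sullivan measure, and since that measure is non-atomic \cite[Theorem~1.7]{Yang} it gives full mass to conical points, so the intersection is non-empty. No loxodromic axes or triangle comparison are used.

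Your constructive route via a translated loxodromic axis and the transition-point triangle lemma is sound in outline, and has the merit of being more elementary (no Patterson--Sullivan theory). Two points deserve comment. First, the circularity you raise in the last step --- choosing $\epsilon_2$ large enough that $2\epsilon_2\geq D_0+D$ while $D_0,D$ themselves depend on $\epsilon_2$ --- dissolves once you observe that any $(\epsilon_1,\epsilon_2^0)$-transition point is automatically an $(\epsilon_1,\epsilon_2)$-transition point for every $\epsilon_2\geq\epsilon_2^0$; so run the whole argument with a fixed pair $(\epsilon_1,\epsilon_2^0)$ to obtain $D_0,D$, and only afterwards enlarge $\epsilon_2$ to $\max\{\epsilon_2^0,(D_0+D)/2\}$. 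Second, the step you yourself flag as ``the main obstacle'' --- that word geodesics joining the fixed points of a loxodromic carry $(\epsilon_1,\epsilon_2)$-transition points with uniformly bounded gaps --- is left as a heuristic. It is true, and follows for instance from the fact that the orbit $\{s^ne\}$ is a quasi-geodesic in $\Gr(\Gamma;\Omega)$ (so the points $s^ne$ lie within bounded word distance of relative-geodesic vertices, hence of transition points on any word geodesic $[x_s^-,x_s^+]$ by \cite[Proposition~8.13]{Hru10}), but this needs to be written out rather than asserted. Once that is done your argument is complete and gives an alternative proof avoiding Patterson--Sullivan measures entirely.
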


\begin{proof}
Let $\Omega_{D,\epsilon_1,\epsilon_2}(g)$ denote the set of $x \in \partial_B (\Gamma;\Omega)$ such that word some geodesic $[e,x]$ contains an $(\epsilon_1,\epsilon_2)$ transition point within $2\epsilon_2$ of $g$ and some point within $D$ of $g$. By \cite[Lemma 5.2]{Yang}, there exist constants $D',\epsilon_1,\epsilon_2>0$ for which $\Omega_{D',\epsilon_1,\epsilon_2}(g)$ has positive Patterson-Sullivan measure. Moreover by \cite[Theorem 1.7]{Yang} the Patterson-Sullivan has no atoms, and must hence give full weight to conical points. Hence, the intersection of $\Omega_{D',\epsilon_1,\epsilon_2}(g)$ with the set of conical points still has positive Patterson-Sullivan measure, and the result follows.
\end{proof}

\subsection*{Deviation inequalities of Green functions along geodesics}

The following versions of Ancona inequalities for relative hyperbolic groups were first proved in \cite[Corollary 9.2]{GGPY21} when $r=1$ and then in \cite[Theorem 3.6]{DG21} uniformly for $r\leq R$. We will call them the \emph{weak relative Ancona inequalities} to contrast with a stronger version which will be described below. We remark that the term "relative" is used to insist on the fact that in relatively hyperbolic groups, these deviation inequalities do not hold along word geodesics, but only along transition points on them, or equivalently along relative geodesics.
\begin{Prop}\label{weakrelativeAncona} 
Let $\Gamma$ be a relatively hyperbolic group and $\mu$ a finitely supported, admissible and symmetric probability measure on $\Gamma$. Then for any $\epsilon_1,\epsilon_2,D>0$ there is a $C>0$ such that for any $1\leq r\leq R$ and any $x,y,z\in \Gamma$ with $y$ within $D$ of an $(\epsilon_1,\epsilon_2)$ transition point on a word geodesic $[x,z]$ we have 
\begin{equation}\label{Ancona}
G(x,z |r)\leq CG(x,y|r)G(y,z|r)
\end{equation}
Equivalently, by \cite[Proposition~8.13]{Hru10}, for any $D>0$ there is a $C>0$ such that (\ref{Ancona}) holds for any $x,y,z$ with $y$ within word distance $D$ of an \emph{relative} geodesic from $x$ to $z$.
\end{Prop}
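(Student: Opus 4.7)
The plan is to establish the weak relative Ancona inequality by adapting Ancona's original inductive argument to the relatively hyperbolic setting, with transition points playing the role that arbitrary points on geodesics play in the hyperbolic case. The lower bound $G(x,y|r)G(y,z|r) \leq C' G(x,z|r)$ is immediate from first-passage decomposition (so in fact we get the opposite inequality too, making this an equivalence up to constants). The content is in the upper bound $G(x,z|r) \leq C\, G(x,y|r) G(y,z|r)$.

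First I would reduce to working with relative geodesics. By \cite[Proposition~8.13]{Hru10}, transition points on a word geodesic $[x,z]$ lie within uniformly bounded Hausdorff distance of (word-)images of points on a relative geodesic from $x$ to $z$ in $\Gr(\Gamma;\Omega)$, so after enlarging $D$, I may assume $y$ is within word distance $D$ of some such relative geodesic, which is a geodesic in a $\delta$-hyperbolic space. The key point of using the transition-point formulation is that $y$ is not absorbed into a single parabolic coset neighborhood: any trajectory of the random walk wishing to bypass $y$ must either go far in the hyperbolic direction (controlled by $\delta$-hyperbolicity of $\Gr(\Gamma;\Omega)$) or take a long excursion into a parabolic coset (controlled separately).

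Second I would run a Ancona-style thin-triangle/first-passage argument: decompose trajectories from $x$ to $z$ according to whether they hit a fixed-radius ball $B(y,L)$ in $\Gr(\Gamma)$. Paths that do hit it contribute $\lesssim G(x,y|r) G(y,z|r)$ by the strong Markov property. For paths that avoid $B(y,L)$, hyperbolicity of the coned-off graph forces them to either enter a deep horoball of some parabolic coset near $y$, or to travel far in the word metric. Horoball excursions are handled by the standard estimate that the Green function restricted to a parabolic coset is bounded by the Green function of the induced first-return chain on that coset, combined with the fact that transition points sit away from parabolic cosets by definition of $(\epsilon_1,\epsilon_2)$. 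Long hyperbolic detours are handled by an inductive relative Ancona inequality at a smaller scale, yielding a geometric series in a chosen small constant $\eta<1$, so the total contribution of avoiding paths is bounded by $\eta\, G(x,z|r)$, which can be absorbed to give the claim.

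The main obstacle, as in \cite{DG21}, is the uniformity of $C$ as $r \uparrow R$. For a fixed $r<R$ standard compactness/positivity arguments give a bound depending on $r$; but near $r=R$ the Green function of the first-return chain on parabolic subgroups may blow up (precisely the spectral degeneracy phenomenon). The cure is that only parabolic excursions near the transition point $y$ matter, and at a transition point these excursions are bounded in diameter in $\Gr(\Gamma;\Omega)$, so the first-return Green function contributions are uniformly bounded in $r \in [1,R]$. Making this quantitative — that the first-return contribution at scale $L$ is uniformly controlled by first-return probabilities on a cocompact piece of the parabolic coset — is the technical heart and is exactly what is carried out in \cite[Section~3]{DG21}.
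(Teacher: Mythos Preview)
The paper does not prove this proposition: it is stated as a known result, citing \cite[Corollary~9.2]{GGPY21} for $r=1$ and \cite[Theorem~3.6]{DG21} for the uniform extension to $r\leq R$. There is thus no in-paper proof to compare your sketch against. Your outline is a reasonable high-level summary of the strategy in those references---reduction via \cite[Proposition~8.13]{Hru10}, an Ancona-type inductive first-passage argument in the coned-off graph, and the observation that uniformity in $r\uparrow R$ is the delicate point handled in \cite[Section~3]{DG21}---and you yourself defer the technical core to that citation, which is exactly what the paper does.
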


We will also need the following deviation inequality for random walks on relatively hyperbolic groups, which we call the \emph{strong relative Ancona inequality}. These were proved by the second and third named authors \cite[Theorem~3.14]{DG21} and \cite[Theorem~2.15]{Dus22b}.
For hyperbolic groups, these were first proved for by Izumi-Neshveyev-Okayasu \cite{INO08} for $r=1$ and by Gouez\"el \cite{Gou14} uniformly for $r\leq R$.

We say that two relative geodesics $[x,y]$ and $[x',y']$ $c$-fellow travel for a time at least $n$ if the $n$ first points of both these relative geodesics are within $c$ of each other in the word metric on the Cayley graph $\Gr(\Gamma)$.

\begin{Prop}\label{strongrelativeAncona}
    Let $\Gamma$ be a relatively hyperbolic group and $\mu$ a finitely supported, admissible and symmetric probability measure on $\Gamma$. Then for any $c>0$ there is a $C>0$ and $0<\alpha<1$ such that if $x,x',y,y'$ are four points such that relative geodesics $[x, y]$ and $[x',y']$
$c$-fellow travel for a time at least $n$ we have 
\begin{equation} \label{strongAncona}
    \left |\frac{G(x,y|r)G(x',y'|r)}{G(x,y|r)G(x',y' |r)}-1\right |\leq C\alpha^n
\end{equation}
\end{Prop}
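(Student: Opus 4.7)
The plan is to follow the scheme introduced by Gouez\"el \cite{Gou14} for hyperbolic groups, adapted to the relative setting. The key conceptual input is that two relative geodesics which $c$-fellow travel for a long time force the $r$-weighted random walks between their endpoints to concentrate through the same thin corridor, so that after conditioning on the first passage through this corridor the distributions ``forget'' the starting endpoint exponentially fast.

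Concretely, fix $c>0$ and let $[x,y]$, $[x',y']$ be relative geodesics $c$-fellow traveling for at least $n$ steps, with matched points $u_k,u_k'$ at positions $k=1,\dots,n$. For each $k$ choose a ``slab'' $A_k$ transverse to the fellow-traveling corridor near $u_k$, wide enough that any relative geodesic from either $x$ to $y$ or from $x'$ to $y'$ must cross $A_k$ within word distance $D$ of an $(\epsilon_1,\epsilon_2)$ transition point. Let $\nu_{x,k}^r$ denote the first-hitting distribution of $A_k$ for the $r$-weighted random walk started at $x$. Proposition~\ref{weakrelativeAncona} yields a decomposition
\begin{equation*}
G(x,y|r) \;\asymp\; \sum_{v\in A_k} \nu_{x,k}^r(v)\, G(v,y|r),
\end{equation*}
with multiplicative error independent of $k,n$ and of $r\in[1,R]$, and similarly for the other three pairs of endpoints.

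The heart of the argument is a Doeblin-type mixing estimate: there exist $\alpha\in(0,1)$ and $C>0$ independent of $r$ such that $\|\nu_{x,n}^r - \nu_{x',n}^r\|_{\mathrm{TV}} \leq C\alpha^n$. This is obtained by showing that at each passage $A_k \to A_{k+1}$ the hitting distributions from the two base points share a uniformly positive common part, which reduces, via the weak Ancona inequality together with finite support and symmetry of $\mu$, to establishing a uniformly positive ``overlap constant'' between consecutive slabs. Feeding this mixing bound into the two decompositions of $G(x,y|r)G(x',y'|r)$ and $G(x,y'|r)G(x',y|r)$ through the slab $A_n$ causes the unknown factors $\nu_{\cdot,n}^r(v)\,G(v,\cdot|r)$ to cancel up to $O(\alpha^n)$, delivering the claimed ratio bound.

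The main obstacle, and the point at which the relatively hyperbolic case departs most from Gouez\"el's original argument, is uniformity of the coupling constants in $r$ up to $r=R$ in the presence of parabolic subgroups. When a slab $A_k$ sits near a parabolic coset, the $R$-weighted walk can linger inside that coset and the hitting distribution may place substantial mass on parabolic vertices, threatening the Doeblin overlap at that scale. Controlling these parabolic excursions uniformly for $r\in[1,R]$ is what forces the symmetry hypothesis and requires the fine spectral estimates on first-return Markov chains along parabolic subgroups developed in \cite{DG21} and \cite{Dus22b}.
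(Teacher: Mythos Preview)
The paper does not give its own proof of this proposition; it is quoted from \cite[Theorem~3.14]{DG21} and \cite[Theorem~2.15]{Dus22b}. Your outline follows the same Gouez\"el-style scheme those references use: decompose the Green functions through first-hitting distributions on a sequence of separating sets along the common corridor, and prove exponential convergence of those hitting distributions via a Doeblin coupling, with the weak relative Ancona inequality (Proposition~\ref{weakrelativeAncona}) supplying the uniform multiplicative control needed at each step. As a high-level map of the argument this is accurate, and the identification of uniformity in $r$ up to $R$ as the central difficulty is correct.

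Two remarks. First, your sketch asserts the Doeblin overlap (``uniformly positive common part at each passage $A_k\to A_{k+1}$'') but does not indicate how it is obtained; in the cited proofs this is the technical core, and in the relative setting the slabs must be anchored at transition points (not arbitrary points on the relative geodesic) precisely so that the weak Ancona inequality applies and gives the needed lower bounds. Second, your final sentence overstates what is required: the strong relative Ancona inequality in \cite{DG21} is established for all finitely supported admissible symmetric $\mu$ directly from the weak inequality and the coupling, without invoking spectral properties of the first-return kernels on parabolic subgroups. Those first-return estimates enter later in the paper (for the Martin boundary structure and for spectral non-degeneracy questions), not here. So the symmetry hypothesis is used to get weak Ancona at $r=R$, but the ``fine spectral estimates'' you mention are not part of this argument.
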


Finally, a pivotal connection between Bowditch boundary and $r$-Martin boundary is given as follows. By \cite[Theorem~1.5, Corollary~1.7 \& Corollary~7.10]{GGPY21} we know that for any $1\leq r \leq R$ there is a $\Gamma$-equivariant surjection $\pi: \Delta_{M,r} \Gamma \rightarrow \Gamma \cup \partial_B(\Gamma;\Omega)$ such that $\pi|_\Gamma = \id_\Gamma$ and such that for any point $x \in \partial_B(\Gamma;\Omega)$ the pre-image $\pi^{-1}(x)$ contains a minimal point in $\partial_{M,r}\Gamma$, and if $x\in \partial_B(\Gamma;\Omega)$ is conical, then the pre-image $\pi^{-1}(x)$ is a singleton. Although these results are formally stated for the case of $r=1$, the proofs go through verbatim for $r$-Martin kernels with any $r \in [1,R]$. This is because weak relative Ancona inequalities hold for $r=R$ in \cite[Theorem 3.6]{DG21} when $\mu$ is symmetric, and the maximum principle for Martin kernels also holds for $r$-Martin kernels for any $r\in [1,R]$.

\section{Minimal points, minimal actions, and strong proximality} \label{s:min-strong-prox}

In this section we show that for $\Gamma$ relatively hyperbolic, and $\mu$ a finitely supported, admissible and symmetric measure on $\Gamma$, the closure of minimal points $\overline{\partial_{M,R}^{m}\Gamma}$ is a minimal and strongly proximal subspace of $R$-Martin boundary. It is a deep open problem to determine whether minimal points of $\partial_{M,R}\Gamma$ are dense when $\Gamma$ is relatively hyperbolic, and we provide examples for when we automatically have $\overline{\partial_{M,R}^{m}\Gamma} = \partial_{M,R}\Gamma$. We emphasize here that aside from strong proximality, the analogous statements for $r=1$ were proved in \cite[Corollary 1.6]{GGPY21} and the arguments carry over to the case where $r\in (1,R]$ (using the Ancona inequalities for $r=R$ from \cite[Theorem 3.6]{DG21}). We provide a self-contained proof of them here for the benefit of the reader.

In what follows, for two functions $u,w$ on $\Gamma$ we will denote $u \asymp_A w $ if for the constant $A>0$ we have $\frac{1}{A} \leq \frac{u(g)}{w(g)} \leq A$ for all $g\in \Gamma$. Suppose that $\mu$ is as above, and recall that $d_{M,R}$ is the Green metric from Section \ref{s:rw-rh}. Then, since $P$ is $\Gamma$ invariant and irreducible, we get a local Harnack inequality \cite[Equation (4.3)]{Woe21} (See also \cite[(25.1)]{Woe00}). That is, for every $g\in \Gamma$ there is a constant $A> 0$, depending only on the word distance between $g$ and $h$, such that $G(g,\cdot |r) = G(\cdot ,g |r) \asymp_A G(\cdot,h |r) = G(h,\cdot |r)$ for any $r\in [1,R]$.

\begin{Prop} (c.f. \cite[Proposition 7.12]{GGPY21})
\label{p:conicalroughlyapproximateminimal}
Let $\Gamma$ be a non-elementary hyperbolic relative to a finite collection of subgroups $\Omega$. Let $\mu$ be a finitely supported, admissible and symmetric probability measure on $\Gamma$, and take $r\in [1,R]$. Then, there exists a constant $C>0$ such that for any $\xi \in \partial_{M,r} \Gamma$ there is a sequence of conical points $\zeta_n$ in the Bowditch boundary such that the sequence $(\pi^{-1}(\zeta_n))$ converges to $\alpha \in \partial_{M,r} \Gamma$ such that for all $g\in \Gamma$ we have
$$
C\geq \frac{K_{r}(g,\alpha)}{K_{r}(g,\xi)} \geq \frac{1}{C}
$$
\end{Prop}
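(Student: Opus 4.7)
The plan is to approximate $\xi$ by lattice points $y_n\in\Gamma$ converging to it in the $r$-Martin compactification, and for each $y_n$ to use Lemma \ref{l:tpointsr} to select a conical point $\zeta_n\in\partial_B(\Gamma;\Omega)$ such that $y_n$ lies within word distance $2\epsilon_2$ of an $(\epsilon_1,\epsilon_2)$ transition point on some word geodesic $[e,\zeta_n]$. The hope is that this transition-point property will allow one to factor Green functions through $y_n$ by means of Ancona inequalities, and thereby show that the $r$-Martin kernel at $\pi^{-1}(\zeta_n)$ differs from the $r$-Martin kernel at $y_n$ by a uniformly bounded multiplicative constant.

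Concretely, I would choose points $z_{n,m}\in\Gamma$ on the geodesic $[e,\zeta_n]$ with $z_{n,m}\to\zeta_n$ as $m\to\infty$. Since $\zeta_n$ is conical, its preimage $\alpha_n:=\pi^{-1}(\zeta_n)$ is a singleton in $\partial_{M,r}\Gamma$ and $z_{n,m}\to\alpha_n$ in the $r$-Martin topology. For $m$ large, $y_n$ remains within uniform word distance of an $(\epsilon_1,\epsilon_2)$ transition point on $[e,z_{n,m}]$, and once $|y_n|$ is large compared to $|g|$, thinness of triangles in the relative Cayley graph (together with \cite[Proposition~8.13]{Hru10}) guarantees that $y_n$ is also within uniform distance of a transition point on $[g,z_{n,m}]$. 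Applying Proposition~\ref{weakrelativeAncona} in both cases yields
\[
G(e,z_{n,m}|r)\leq C_1\,G(e,y_n|r)\,G(y_n,z_{n,m}|r),\qquad G(g,z_{n,m}|r)\leq C_1\,G(g,y_n|r)\,G(y_n,z_{n,m}|r).
\]
Combining these with the elementary first-passage lower bound $G(x,z|r)\geq F(x,y|r)G(y,z|r)=G(x,y|r)G(y,z|r)/G(y,y|r)$ and the $\Gamma$-invariance $G(y,y|r)=G(e,e|r)<\infty$, I would divide to obtain
\[
\tfrac{1}{C}\,K_r(g,y_n)\;\leq\;K_r(g,z_{n,m})\;\leq\;C\,K_r(g,y_n),
\]
with $C=C_1\,G(e,e|r)$.

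Letting $m\to\infty$ and invoking continuity of $K_r(g,\cdot)$ on the $r$-Martin compactification transfers these bounds to $K_r(g,\alpha_n)$. Then by compactness of $\partial_{M,r}\Gamma$, I extract a subsequence $\alpha_{n_k}\to\alpha\in\partial_{M,r}\Gamma$; sending $k\to\infty$ and using $K_r(g,y_{n_k})\to K_r(g,\xi)$ yields the desired estimate $\tfrac{1}{C}K_r(g,\xi)\leq K_r(g,\alpha)\leq C\,K_r(g,\xi)$ for every $g\in\Gamma$.

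The main obstacle is ensuring that $C$ is genuinely independent of $g$. The application of the Ancona inequality along $[g,z_{n,m}]$ requires $y_n$ to sit uniformly close to a transition point there, which is only true once $n$ is sufficiently large depending on $|g|$. Since the final estimate is a statement about a limit, this $g$-dependent threshold is harmless, but it forces one to extract a single subsequence $\alpha_{n_k}\to\alpha$ that works for all $g$ simultaneously. This is arranged via a standard diagonal argument together with metrizability of the $r$-Martin compactification via the metric $d_{M,r}$ from \eqref{defmetricMartin}.
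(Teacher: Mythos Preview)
Your proof is correct and follows essentially the same route as the paper's: choose $y_n\to\xi$, attach to each $y_n$ a conical $\zeta_n$ via Lemma~\ref{l:tpointsr}, and use the weak relative Ancona inequality at the point $y_n$ to compare $K_r(g,\cdot)$ evaluated near $\zeta_n$ with $K_r(g,y_n)$. The one place where the two executions differ is how the \emph{two-sided} bound is obtained. The paper picks, for each $n$, transition points $p_n\in[g,\zeta_n]$ and $q_n\in[e,\zeta_n]$ far along the rays with $d(p_n,q_n)\le D$, applies Ancona through $g_n$ on the segments $[e,q_n]$ and $[g,p_n]$, and then uses a Harnack inequality between $p_n$ and $q_n$ to cancel the residual Green factors. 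You instead use the elementary first-passage lower bound $G(x,z|r)\ge G(x,y|r)G(y,z|r)/G(e,e|r)$ to supply the reverse inequality; this is a clean shortcut and avoids having to arrange the two auxiliary transition points to be close. Both arguments produce a constant of the form (Ancona constant)$\times$(Harnack/first-passage constant), independent of $g$.

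Two minor remarks. First, your justification that $y_n$ is uniformly close to a transition point on $[g,z_{n,m}]$ is a little informal; the paper makes this precise via \cite[Lemma~2.4]{DGdrift}, which is exactly the ``transition points are thin'' statement you are invoking. Second, the diagonal argument you mention at the end is unnecessary: since $\partial_{M,r}\Gamma$ is compact metrizable, a \emph{single} convergent subsequence $\alpha_{n_k}\to\alpha$ suffices, because for each fixed $g$ the bound $K_r(g,\alpha_{n_k})\asymp_C K_r(g,y_{n_k})$ holds for all sufficiently large $k$, and passing to the limit in $k$ gives the conclusion for that $g$.
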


\begin{proof}
Let $\xi \in \partial_{M,r} \Gamma$ and $x=\pi(\xi)$. Let $(g_n)$ be a sequence in $\Gamma$ which converges to $\xi$. By Lemma~\ref{l:tpointsr} there exist $\epsilon_1,\epsilon_2 > 0$ and a sequence $(\zeta_n)$ of conical points of the Bowditch boundary for which some geodesic ray $[e,\zeta_n]$ contains an $(\epsilon_1,\epsilon_2)$ transition point within $2\epsilon_2$ of $g_n$. Note that since $\zeta_n$ are conical, any geodesic ray with endpoint $\zeta_n$ also converges to  $\pi^{-1}(\zeta_n)$ inside the $R$-Martin boundary. 

Now, by \cite[Lemma 2.4]{DGdrift} there is a constant $D>0$ such that for any $g\in \Gamma$ and large enough $n$, any geodesic $[g,\zeta_n]$ has an $(\epsilon_1,\epsilon_2)$ transition point within $D$ of $g_n$. By the relative Ancona inequality (in the form of \cite[Corollary 9.2]{GGPY21}) there exists $B>0$ independent of $n$ such that for a transition point $q_n\in [e,\zeta_n]$ within $D$ of $g_n$ we have $G(e,g_n |r)\asymp_B G(e,q_n |r)G(q_n,g_n |r)$ for a transition point $p_n\in [g,\zeta_n]$ within $D$ of $g_n$ we have $G(g,g_n |r)\asymp_B G(g,p_n |r)G(p_n,g_n |r)$. Let $p_n\in [g,\zeta_n],q_n\in [e,\zeta_n]$ be unbounded sequences of $(\epsilon_1,\epsilon_2)$ transition points with word distance (between $p_n$ and $q_n$) at most $D$ and
$$
K_r(e,q_n)/K_r(e,\pi^{-1}(\zeta_n))\in [(1+1/n)^{-1},1+1/n], 
$$
$$
K_r(g,p_n)/K_r(g,\pi^{-1}(\zeta_n))\in [(1+1/n)^{-1},1+1/n].
$$
By Harnack inequalities there is a constant $A>0$ such that $G(p_n,\cdot |r) = G(\cdot ,p_n |r) \asymp_A G(\cdot ,q_n|r) = G(q_n,\cdot|r)$, so we get that
$$
K_r(g,p_n)=\frac{G(g,p_n|r)}{G(e,p_n|r)}\asymp_A \frac{G(g,p_n|r)}{G(e,q_n|r)}  
$$
$$
\asymp_{B^2} \frac{G(g,g_n|r)G(q_n,g_n|r)}{G(e,g_n|r)G(p_n,g_n|r)} \asymp_A K_r(g,g_n).
$$ 
Hence, we get
$K_r(g, \pi^{-1}(\zeta_n))\asymp_{A^2B^2(1+\frac{1}{n})} K_r(g,g_n)\to K_r(g,\xi)$. Thus, if we take $C=A^2B^2$, any limit point $\alpha$ of the sequence $(\pi^{-1}(\zeta_n))$ will satisfy
\begin{equation*}
    C\geq K_r(g,\alpha) / K_r(g,\xi) \geq \frac{1}{C}. \qedhere
\end{equation*}
\end{proof}

\begin{Cor} \label{c:min-dense-con-dense}
Let $\Gamma$ be a non-elementary hyperbolic relative to a finite collection of subgroups $\Omega$. Let $\mu$ be a finitely supported, admissible and symmetric probability measure on $\Gamma$, and $r\in [1,R]$. Then any minimal point in the $r$-Martin boundary is a limit of a sequence of preimages of conical points. Hence, conical points are dense in $\overline{\partial_{M,r}^m\Gamma}$.
\end{Cor}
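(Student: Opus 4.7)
The plan is to deduce the corollary directly from Proposition~\ref{p:conicalroughlyapproximateminimal} by using the defining property of minimality of a boundary point, namely that if $K_r(\cdot,\xi)$ is minimal and $v$ is a positive $r^{-1}$-harmonic function satisfying $v \leq C \cdot K_r(\cdot,\xi)$, then $v$ must be a positive scalar multiple of $K_r(\cdot,\xi)$.

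Let $\xi \in \partial_{M,r}^m \Gamma$ be minimal. First I would apply Proposition~\ref{p:conicalroughlyapproximateminimal} to obtain a sequence of conical points $(\zeta_n)$ in the Bowditch boundary and a point $\alpha \in \partial_{M,r}\Gamma$ such that $\pi^{-1}(\zeta_n) \to \alpha$ and
$$ \tfrac{1}{C} \, K_r(g,\xi) \;\leq\; K_r(g,\alpha) \;\leq\; C \, K_r(g,\xi) \qquad \text{for all } g \in \Gamma. $$
Since $K_r(\cdot,\alpha)$ is positive $r^{-1}$-harmonic and dominated by $C\,K_r(\cdot,\xi)$, the minimality of $K_r(\cdot,\xi)$ forces $K_r(\cdot,\alpha) = c\, K_r(\cdot,\xi)$ for some constant $c > 0$. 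Evaluating at $g = e$ gives $c = 1$, so $K_r(\cdot,\alpha) = K_r(\cdot,\xi)$ pointwise on $\Gamma$. Since points of the Martin boundary are separated by the family of kernel functions $\{K_r(g,\cdot) : g\in\Gamma\}$, this yields $\alpha = \xi$. Hence $\xi$ is itself the limit of the preimages $\pi^{-1}(\zeta_n)$ of conical points.

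For the second assertion, recall that by \cite[Theorem~1.5, Corollary~1.7]{GGPY21} (and its $r$-Martin extension) the fibre $\pi^{-1}(\zeta)$ over a conical point $\zeta \in \partial_B(\Gamma;\Omega)$ is a singleton which is moreover known to contain a minimal element; thus every preimage of a conical point lies in $\partial_{M,r}^m \Gamma$. Combining this with the first part, the set of preimages of conical points is a subset of $\partial_{M,r}^m \Gamma$ whose closure contains every minimal point, and therefore is dense in $\overline{\partial_{M,r}^m \Gamma}$.

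The main (in fact only) substantive step is the reduction $\alpha = \xi$ from the two-sided comparability of Martin kernels; once this is in place the remainder is bookkeeping. No extra geometric input beyond Proposition~\ref{p:conicalroughlyapproximateminimal} and the $r$-Martin version of the $\Gamma$-equivariant surjection $\pi : \Delta_{M,r}\Gamma \to \Gamma \cup \partial_B(\Gamma;\Omega)$ is required.
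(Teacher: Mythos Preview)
Your proof is correct and follows essentially the same approach as the paper: both apply Proposition~\ref{p:conicalroughlyapproximateminimal} and then use minimality of $K_r(\cdot,\xi)$ together with the normalization $K_r(e,\cdot)=1$ to force $K_r(\cdot,\alpha)=K_r(\cdot,\xi)$ and hence $\alpha=\xi$. Your write-up is simply more explicit about why equality of kernels gives equality of boundary points and why preimages of conical points are minimal, but the substance is identical.
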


\begin{proof}
Suppose $\xi \in \partial_{M,r} \Gamma$ is minimal, and let $\alpha$ be a limit point of pre-images of conical points $\pi^{-1}(\zeta_n)$ such that $0\leq K_r(g,\alpha) \leq C K_r(g,\xi)$. Minimality of $\xi$ implies that $K_r(g,\alpha)=K_r(g,\xi)$, since these functions are normalized at $g=e$. Thus, we get that $\alpha = \xi$.
\end{proof}

The following proposition shows that the action $\Gamma \curvearrowright \overline{\partial_{M,r}^m\Gamma}$ is strongly proximal. The same result holds for $\partial_{M,r}\Gamma$ when its minimal points are dense. Recall that $\Gamma \curvearrowright X$ with $X$ compact Hausdorff is \emph{strongly proximal} if whenever $\nu \in \Prob(X)$ then the closure $\overline{\Gamma \nu}$ of its $\Gamma$ orbit contains a Dirac mass $\delta_x$ for $x\in X$. By essentially the same proof as in \cite[Corollary 1.6]{GGPY21} we know that $\Gamma\curvearrowright \overline{\partial_{M,r}^m\Gamma}$ is minimal for any $r\in [1,R]$, and we provide our own proof of this for the benefit of the reader.

\begin{Prop} \label{p:min-strong-prox}
Let $\Gamma$ be a non-elementary hyperbolic relative to a finite collection of subgroups $\Omega$. Let $\mu$ be a finitely supported, admissible and symmetric probability measure on $\Gamma$, and $r\in [1,R]$. Then the action $\Gamma\curvearrowright \overline{\partial_{M,r}^m\Gamma}$ is minimal and strongly proximal.
\end{Prop}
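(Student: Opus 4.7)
The strategy is to leverage the $\Gamma$-equivariant continuous surjection $\pi:\overline{\partial_{M,r}^m\Gamma}\to \partial_B(\Gamma;\Omega)$, whose fibers over conical points are singletons, in order to transfer dynamical properties from the Bowditch boundary upstairs to $\overline{\partial_{M,r}^m\Gamma}$. The key elementary tool is the following compactness observation: whenever $(\eta_n)\subset \overline{\partial_{M,r}^m\Gamma}$ satisfies $\pi(\eta_n)\to \zeta$ for a conical $\zeta\in\partial_B(\Gamma;\Omega)$, the whole sequence $\eta_n$ converges to the unique preimage $\pi^{-1}(\zeta)$, because any subsequential limit $\eta^*$ must project to $\zeta$ and hence coincide with that singleton preimage.

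For minimality, fix $\xi\in \overline{\partial_{M,r}^m\Gamma}$. By Corollary \ref{c:min-dense-con-dense}, preimages of conical points are dense in $\overline{\partial_{M,r}^m\Gamma}$, so it suffices to exhibit $\pi^{-1}(\zeta)\in \overline{\Gamma\xi}$ for every conical $\zeta\in\partial_B(\Gamma;\Omega)$. Using the known minimality of $\Gamma\curvearrowright \partial_B(\Gamma;\Omega)$ (recalled in Section \ref{s:rw-rh} as a consequence of geometric finiteness), pick $g_n\in \Gamma$ with $\pi(g_n\xi)=g_n\pi(\xi)\to \zeta$; the compactness observation then yields $g_n\xi\to \pi^{-1}(\zeta)$.

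For strong proximality, let $\nu\in \Prob(\overline{\partial_{M,r}^m\Gamma})$. For a hyperbolic $s\in\Gamma$ with (necessarily conical) fixed points $x_s^\pm\in\partial_B(\Gamma;\Omega)$, write $\alpha_s^\pm:=\pi^{-1}(x_s^\pm)\in\overline{\partial_{M,r}^m\Gamma}$. North-South dynamics of $s^n$ on $\partial_B(\Gamma;\Omega)$, combined with the compactness observation above, gives $s^n\eta\to \alpha_s^+$ for every $\eta\ne \alpha_s^-$. A bounded convergence argument applied to continuous test functions upgrades this pointwise statement to weak-* convergence
\[
s^n\nu \;\longrightarrow\; c_s\,\delta_{\alpha_s^-}+(1-c_s)\,\delta_{\alpha_s^+},
\]
where $c_s=\nu(\{\alpha_s^-\})$. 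Since $\nu$ has at most countably many atoms with total mass at most $1$, while $\{\alpha_s^-\}$, as $s$ ranges over hyperbolic elements of $\Gamma$, is infinite (because attractors and repellers of hyperbolic isometries are dense in the non-elementary limit set), for each $k\in\mathbb N$ we may select a hyperbolic $s_k$ with $c_{s_k}<1/k$. Choosing $n_k$ large then yields $d(s_k^{n_k}\nu,\delta_{\alpha_{s_k}^+})<2/k$ in any weak-* compatible metric $d$. Compactness of $\overline{\partial_{M,r}^m\Gamma}$ lets us extract a subsequence $\alpha_{s_{k_j}}^+\to \alpha_\infty$, and then $s_{k_j}^{n_{k_j}}\nu \to \delta_{\alpha_\infty}$, witnessing strong proximality.

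The main obstacle is the residual atom phenomenon in the strong proximality step: a single North-South contraction by $s^n$ only yields a Dirac limit when $\nu$ assigns no mass to $\alpha_s^-$. Overcoming this requires simultaneously exploiting the abundance of hyperbolic fixed points in the Bowditch boundary against the atomic structure of $\nu$ to find good $s$, and then performing a diagonal extraction across $k$ to produce a genuine Dirac mass in the weak-* orbit closure of $\nu$.
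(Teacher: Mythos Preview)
Your proof is correct and shares with the paper the central mechanism: lift North--South dynamics of hyperbolic elements from the Bowditch boundary to $\overline{\partial_{M,r}^m\Gamma}$ via the map $\pi$, exploiting that fibers over conical points are singletons. Your minimality argument (transfer minimality of $\Gamma\curvearrowright\partial_B(\Gamma;\Omega)$ upstairs via the compactness observation and density of conical preimages) is essentially the same as the paper's, phrased slightly more directly.

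Where you diverge is in strong proximality. The paper uses a two--step ping--pong: given $\tau$, first push by $s^n$ to the two--atom limit $c\delta_{\xi_s^-}+(1-c)\delta_{\xi_s^+}$, then choose a second hyperbolic element $t$ whose repeller $\xi_t^-$ avoids both atoms $\xi_s^\pm$ (possible since $\Gamma$ is non--elementary), and push by $t^m$ to collapse everything onto $\delta_{\xi_t^+}$. This produces a genuine Dirac mass in $\overline{\Gamma\tau}$ after composing just two hyperbolic elements. Your route instead exploits the infinitude of repellers against the summability of atoms of $\nu$ to find $s_k$ with $\nu(\{\alpha_{s_k}^-\})<1/k$, and then performs a diagonal extraction along $k$. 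Both are valid; the paper's argument is shorter and avoids the diagonal step and the auxiliary metric, while yours has the mild advantage of never needing to pick two hyperbolic elements with disjoint fixed--point sets (though this is standard).
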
 

\begin{proof}
Let $s\in \Gamma$ be hyperbolic for a geometrically finite action $\Gamma\curvearrowright X$. Since $x_s^{\pm}$ are conical, there pre-images are minimal points in the Martin boundary and there are unique $s$-fixed points $\xi_s^{\pm}$ in $\overline{\partial_{M,r}^m\Gamma}$ such that $\pi(\xi_s^{\pm}) = x_s^{\pm}$.
Suppose that $\xi \in \overline{\partial_{M,r}^m\Gamma} \setminus \{\xi_s^{-}\}$. We show that $s^n \xi \rightarrow \xi_s^{+}$. Indeed, by contradiction assume that $s^n\xi$ does not converge to $\xi_+$.
Then, by compactness there is a subequence of $(s^n\xi)$ which converges to $\xi'\neq \xi_+$. 
Since $\pi$ is $\Gamma$-equivariant, we get that $s^n\pi(\xi)$ converges to $x^+ = \pi(\xi^+) = \pi(\xi')$, contradicting the fact that the pre-image of the conical point $x^+$ is a singleton.

Thus, every hyperbolic element $s\in \Gamma$ has an attractor and repeller $\xi_s^{\pm}$ in $\overline{\partial_{M,r}^m\Gamma}$.
The set of $\xi \in \overline{\partial_{M,r}^m\Gamma}$, with $\pi(\xi)$ conical, is dense in $\overline{\partial_{M,r}^m\Gamma}$, and the set $\{ \ x_s^{\pm} \ | \ s\in \Gamma, \ \text{hyperbolic} \ \}$ is dense inside conical points of $\partial_B(\Gamma;\Omega)$.
Consequently, we get that the set of attractors and repellers $\{ \ \xi_s^{\pm} \ | \ s \in \Gamma, \ \text{hyperbolic} \ \}$ is dense in $\overline{\partial_{M,r}^m\Gamma}$.

Now, as $\Gamma$ is non-elementary, there are two hyperbolic elements $s,t \in \Gamma$ such that the set $\{\xi_s^{\pm}, \xi_t^{\pm}\}$ has at least three elements. We assume without loss of generality that $\xi_s^{\pm} \neq \xi_t^{-}$. We will first show that $\overline{\partial_{M,r}^m\Gamma}$ is strongly proximal.
Fix $\tau \in \Prob(\overline{\partial_{M,r}^m\Gamma})$. Then, since $s\xi_s^{-} = \xi_s^{-}$, by the dominated convergence theorem we get that
$$
s^n \tau \rightarrow \tau(\{\xi_s^{-}\}) \delta_{\xi_s^{-}} + (1-\tau(\{\xi_s^{-}\}))\delta_{\xi_s^{+}}.
$$
Also, since $\xi_s^{\pm} \neq \xi_t^{-}$ the same manipulation shows that  $t^n \xi_s^{-} \rightarrow \xi_t^{+}$, so that setting $\nu = \tau(\{\xi_s^{-}\}) \delta_{\xi_s^{-}} + (1-\tau(\{\xi_s^{-}\}))\delta_{\xi_s^{+}}$ we get that $t^m \nu \rightarrow \delta_{\xi_t^+}$. 
Thus, the closure of the $\Gamma$-orbit of $\tau$ contains a Dirac measure, so that $\Gamma \curvearrowright \overline{\partial_{M,r}^m\Gamma}$ is strongly proximal. 

Next, we show that $\Gamma \curvearrowright \overline{\partial_{M,r}^m\Gamma}$ is minimal. If we restrict the above argument to Dirac measures, we have shown above that for any element $\xi \in \overline{\partial_{M,r}^m\Gamma}$ and hyperbolic element $s\in \Gamma$, the closure of the orbit $\Gamma \xi$ contains $\xi_s^{\pm}$. Since $\overline{\partial_{M,r}^m\Gamma}$ is the closure of $\{ \ \xi_s^{\pm} \ | \ s \in \Gamma, \ \text{hyperbolic} \ \}$, we are done.
\end{proof}

There are many natural examples where minimal points are automatically dense in $\partial_{M,r}\Gamma$ , so that the action $\Gamma \curvearrowright \partial_{M,r} \Gamma = \overline{\partial_{M,r}^m\Gamma}$ is minimal and strongly proximal. For instance,

\begin{enumerate}
\item when $\Gamma=\Gamma_1 * ... *\Gamma_n$ is a free product of infinite groups $\Gamma_1,..., \Gamma_n$ for $n\geq 2$, and $\mu$ is a finitely supported, admissible, adapted and symmetric generating measure on this free product, by \cite[Theorem 26.1]{Woe00} the preimages of conical points are dense in the Martin boundary, and hence so are minimal points in $\partial_{M,r} \Gamma$.
\item when $\Gamma$ is a non-elementary hyperbolic, all points in $\partial_{M,r}\Gamma$ are automatically minimal, because they are all conical.
\item More generally, when $\Gamma$ is non-elementary hyperbolic relative to a finite collection of \emph{virtually abelian} subgroups, by \cite[Theorem~1.4]{DG21}, all points in $\partial_{M,r}\Gamma$ are minimal.
\end{enumerate}

\section{Spectral non-degeneracy} \label{s:spect-non-deg}
In this section we discuss spectral non-degeneracy, which is one of our standing assumptions for random walks on relatively hyperbolic groups. It roughly means that the $R$-induced first-return time random walk on a parabolic subgroup has spectral radius greater than one. Our goal is to show that spectrally non-degenerate random walks are ubiquitous, by showing that they exist on a variety of relatively hyperbolic groups.

Let $\Gamma$ be be hyperbolic relative to a finite collection of subgroups $\Omega$ group and let $\mu$ be a finitely supported, admissible and symmetric probability measure on $\Gamma$.
For a parabolic subgroup $H \in \Omega$, we consider the first return kernel $P_{r,H}$ to $H$ associated with the measure $r\mu$. That is, for $x,y$ in $ H$, we set
$$P_{r,H}(x,y)=\sum_{n\geq 1}\sum_{z_1,...,z_{n-1}\notin H}r^n\mu(x^{-1}z_1)...\mu(z_{n-1}^{-1}y).$$
We denote by $G_{r,H}$ the Green function associated with $P_{r,H}$ and by $R_H(r)$ the inverse of the spectral radius of $P_{r,H}$.
By \cite[Lemma~4.4]{DG21}, for every $r\leq R$, for every $x,y\in H$,
\begin{equation}\label{Greencoincide}
    G(x,y|r)=G_{r,H}(x,y|1).
\end{equation}
In particular, since $G(x,y|R)$ is finite, for every $r\leq R$, $R_{H}(r)\geq 1$.

\begin{Def}\label{defspectraldegeneracy}
Let $\Gamma$ be be hyperbolic relative to a finite collection of subgroups $\Omega$ group and let $\mu$ be a finitely supported, admissible and symmetric probability measure on $\Gamma$. We say that the random walk determined by $\mu$ is \emph{spectrally degenerate} along a parabolic subgroup $H \in \Omega$ if $R_H(R)=1$. 
We say that the random walk determined by $\mu$ is \emph{spectrally non-degenerate} if it is not spectrally degenerate along any $H \in \Omega$.
\end{Def}

Spectral non-degeneracy is of particular importance in the study of random walks on relatively hyperbolic groups.
For instance, it is a determining property characterizing the homeomorphism type of the $R$-Martin boundary as shown in \cite{DG21}, as well as a key property for establishing local limit theorems \cite{Dus22b}. Regardless of whether or not $\mu$ is spectrally non-degenerate, it follows from equation \eqref{Greencoincide} and \cite[Lemma~6.2, Proposition~6.3]{DG21}, respectively, that
\begin{equation} \label{eq:fin-para-green-der}
G_{R,H}(e,e|1) < \infty \ \text{and} \ \frac{d}{dt}_{|t=1}G_{R,H}(e,e|t)< \infty.
\end{equation}

We now restrict our attention to free products, which are particular cases of relatively hyperbolic groups.
Following the terminology of \cite{Woe00}, a probability measure $\mu$ on $\Gamma_1*\Gamma_2$ is called \textit{adapted} if it can be written as
$$\mu=\alpha_1\mu_1+\alpha_2\mu_2,$$
where $\alpha_i\geq 0$, $\alpha_1+\alpha_2=1$ and $\mu_i$ are probability measures on $\Gamma_i$ for $i=1,2$.

We assume that each $\mu_i$ are finitely supported, admissible and symmetric on $\Gamma_i$ for $i=1,2$.
We let $R_i$ be the inverse of the spectral radius of $\mu_i$ and $G_i$ be the Green function associated with $\mu_i$. For each $i=1,2$, by \cite[Proposition~9.18]{Woe00} there exists a continuous function $\zeta_i$ of $r$ such that for $x,y\in \Gamma_i$, for $r\leq R$,
\begin{equation}\label{formulaWoess}
\frac{G(x,y|r)}{G(e,e|r)}=\frac{G_{i}(x,y|\zeta_i(r))}{G_{i}(e,e|\zeta_i(r))}.
\end{equation}
Moreover, $\zeta_i(R)\leq R_i$ and $\zeta_i(r)<R_i$ if $r<R$.
We also define
\begin{equation*}
\theta=RG(e,e|R),\ \theta_i=R_iG_i(e,e|R_i)
\end{equation*}
and
\begin{equation}\label{defthetabar}
\overline{\theta}=\min \big \{\frac{\theta_1}{\alpha_1}, \frac{\theta_2}{\alpha_2} \big \}.
\end{equation}

We define implicitly the function $\Phi$ as
\begin{equation}\label{defPhi}
G(e,e|r)=\Phi(rG(e,e|r)),
\end{equation}
which is analytic on an open neighborhood of the interval $(0,\theta)$
and we consider the function $\Psi$ defined for $s<\theta$ as
\begin{equation}\label{defPsi}
\Psi(s)=\Phi(s)-t\Phi'(s).
\end{equation}
Since $G(e,e|R)$ is finite, the functions $\Phi$ and $\Psi$ are well-defined on $(0,\theta]$.

We define similarly the functions $\Phi_i$ and $\Psi_i$ associated with the probability measures $\mu_i$.
By \cite[Theorem 9.19]{Woe00} we get that
\begin{equation}\label{equationPhiPsi}
\begin{split}
\Phi(s) =\Phi_1(\alpha_1s) +\Phi_2(\alpha_2s) - 1 \\
\Psi(s) = \Psi_1(\alpha_1s) + \Psi_2(\alpha_2s) -1,
\end{split}
\end{equation}

This shows that both functions $\Phi$ and $\Psi$ can be extended to the interval $(0,\overline{\theta}]$.
The situation where $\Psi(\overline{\theta})<0$ is called the "typical case" in \cite{Woe00}.
This is equivalent to the fact that $\zeta_i(R)<R_i$ for $i=1,2$.
By \cite[Proposition~2.9]{Dus22b}, this corresponds to the case of a spectrally non-degenerate random walk.
Thus, in order to prove the following Proposition, we need to construct an example of an adapted random walk where $\Psi(\overline{\theta})<0$.

\begin{Prop}\label{propconstructionnonspecdeg}
Let $\Gamma=\Gamma_1*\Gamma_2$ be a non-elementary free product of two groups $\Gamma_1,\Gamma_2$.
Then, there exists a symmetric adapted probability measure $\mu$ with finite support generating $\Gamma$, which is spectrally non-degenerate.
\end{Prop}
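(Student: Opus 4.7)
The plan is to invoke the equivalence cited from \cite[Proposition~2.9]{Dus22b} and explicitly produce an adapted measure $\mu = \alpha_1 \mu_1 + \alpha_2 \mu_2$ with $\Psi(\overline{\theta}) < 0$. From \eqref{equationPhiPsi}, $\Psi(\overline{\theta}) = \Psi_1(\alpha_1 \overline{\theta}) + \Psi_2(\alpha_2 \overline{\theta}) - 1$. Since $\Psi_i'(s) = -s\Phi_i''(s) < 0$ and $\Psi_i(0) = \Phi_i(0) = 1$, each $\Psi_i$ is strictly decreasing on $(0,\theta_i]$. Splitting the analysis into the two sub-regimes according to which of $\theta_1/\alpha_1, \theta_2/\alpha_2$ realizes $\overline{\theta}$, one checks that $\Psi(\overline{\theta})$ is minimized at the threshold $\alpha_i^* = \theta_i/(\theta_1+\theta_2)$, where $\alpha_i^* \overline{\theta} = \theta_i$ for both $i$, giving the minimum value $\Psi_1(\theta_1) + \Psi_2(\theta_2) - 1$. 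Thus the task reduces to exhibiting $\mu_1, \mu_2$ achieving $\Psi_1(\theta_1) + \Psi_2(\theta_2) < 1$.

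For many groups this is automatic: whenever $G_i'(e,e|R_i) = \infty$, one has $\Psi_i(\theta_i) = 0$. This occurs for every finitely supported symmetric admissible walk on a virtually nilpotent group of homogeneous dimension at most $4$, on any non-elementary hyperbolic group (where $P_i^n(e,e) R_i^n \asymp n^{-3/2}$ forces $\sum_n n P_i^n(e,e) R_i^n$ to diverge), and in many intermediate settings. The delicate case is when local-limit asymptotics give $G_i'(e,e|R_i) < \infty$ for every admissible symmetric $\mu_i$, notably $\Gamma_i = \mathbb{Z}^d$ with $d \geq 5$. In this case I would use a near-recurrent perturbation: pick an infinite-order element $g_i \in \Gamma_i$ and a finite symmetric generating set $S_i$ for $\Gamma_i$ containing $\{g_i, g_i^{-1}\}$, and set
\[
\mu_i^{(t)} := (1-t)\cdot\tfrac{1}{2}(\delta_{g_i}+\delta_{g_i^{-1}}) + t\,\nu_i,
\]
where $\nu_i$ is uniform on $S_i \setminus \{g_i, g_i^{-1}\}$; this is a symmetric, finitely supported, admissible measure on $\Gamma_i$ for every $t \in (0,1)$.

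To show $\Psi_i^{(t)}(\theta_i^{(t)}) \to 0$ as $t \to 0$, one decomposes $(\mu_i^{(t)})^{*n}(e)$ according to how many of the $n$ steps are drawn from $\nu_i$ rather than from $\mu_Z := \tfrac{1}{2}(\delta_{g_i}+\delta_{g_i^{-1}})$. The leading-order contribution comes from pure $\langle g_i\rangle$-trajectories, giving $G_i^{(t)}(e,e|r) \sim G_Z(e,e|(1-t)r)$ as $t\to 0$; since $G_Z(e,e|x) = (1-x^2)^{-1/2}$ this yields $G_i^{(t)}(e,e|1) \sim (2t)^{-1/2}$ and $\bigl(G_i^{(t)}\bigr)'(e,e|1) \sim (2t)^{-3/2}$, so
\[
\Psi_i^{(t)}(\theta_i^{(t)}) = \frac{G_i^{(t)}(e,e|1)^2}{G_i^{(t)}(e,e|1) + \bigl(G_i^{(t)}\bigr)'(e,e|1)} = O(\sqrt{t}) \to 0.
\]
Picking $t, s > 0$ small enough that $\Psi_1^{(t)}(\theta_1^{(t)}) + \Psi_2^{(s)}(\theta_2^{(s)}) < 1$ and setting $\alpha_i$ proportional to $\theta_i^{(t_i)}$ with $\alpha_1 + \alpha_2 = 1$ produces $\mu = \alpha_1 \mu_1^{(t)} + \alpha_2 \mu_2^{(s)}$ satisfying $\Psi(\overline{\theta}) < 0$, hence spectrally non-degenerate. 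The main technical difficulty lies in controlling the $t$-dependent corrections to $G_i^{(t)}$ uniformly near $r=1$; degenerate boundary cases (finite $\Gamma_i$, where $\Psi_i(\infty) = 1/|\Gamma_i|$ combined with the non-elementary hypothesis gives the needed inequality directly, or infinite-torsion $\Gamma_i$, handled by replacing $\langle g_i\rangle$ with a suitable infinite subgroup admitting a recurrent walk) are dealt with separately.
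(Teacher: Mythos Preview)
Your overall reduction matches the paper's: choose $\alpha_i$ so that $\alpha_i\overline\theta=\theta_i$ for both $i$, reducing spectral non-degeneracy to $\Psi_1(\theta_1)+\Psi_2(\theta_2)<1$. Where you diverge is in how you produce $\mu_i$ with small $\Psi_i(\theta_i)$. The paper simply invokes \cite[Lemma~17.9]{Woe00}, which states that any group containing an element of order at least $3$ carries a finitely supported symmetric admissible measure with $\Psi_i(\theta_i)<1/2$; since every infinite finitely generated group has such an element (else it would be $(\mathbb Z/2)^k$, hence finite), this dispatches the case of two infinite factors in one line. When one factor is finite, the paper does not use the $\Psi$-formalism at all: spectral degeneracy along a finite $\Gamma_1$ would force the first-return kernel $P_{R,\Gamma_1}$ to be recurrent, contradicting $G_{R,\Gamma_1}(e,e|1)<\infty$; one then sends $\alpha_2\to 0$ to force $\zeta_2(R)<R_2$.

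Your explicit perturbation construction is a plausible route toward Woess's lemma, but it has two genuine gaps. First, the asymptotic $G_i^{(t)}(e,e|1)\sim(2t)^{-1/2}$ is asserted via the heuristic ``leading-order contribution comes from pure $\langle g_i\rangle$-trajectories,'' and you acknowledge but do not resolve the issue of controlling the correction terms; in $\mathbb Z^d$ the transverse steps contribute non-trivially to $P_n^{(t)}(e,e)$ at every scale, and the uniform-in-$r$ control near $r=1$ is precisely where the work lies. Second, your proposed fix for infinite torsion groups---replacing $\langle g_i\rangle$ by ``a suitable infinite subgroup admitting a recurrent walk''---cannot work: by Varopoulos's theorem, recurrent groups are virtually $\mathbb Z^d$ with $d\le 2$ and hence contain infinite-order elements, so an infinite torsion group has no infinite recurrent subgroup. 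The paper's citation of \cite[Lemma~17.9]{Woe00} sidesteps both problems, since that lemma needs only an element of order $\ge 3$.
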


\begin{proof}
We have two cases to consider.
Either one of the free factors $\Gamma_i$ is finite or both are infinite.

Let us first assume that $\Gamma_1$ is finite and by contradiction that $\mu$ is spectrally degenerate along $\Gamma_1$.
Then, $P_{R,\Gamma_1}$ is a $\Gamma_1$-invariant transition kernel whose spectral radius is 1.
Since $\Gamma_1$ is finite, the irreducible transition kernel $P_{R,\Gamma_1}$ must be recurrent, so in particular we get that
$G_{R,\Gamma_1}(e,e|1)$ is infinite, yielding a contradiction with equation \eqref{eq:fin-para-green-der}.
Thus, for any choice of $\alpha_1,\alpha_2$, the measure $\mu$ cannot be spectrally degenerate along $\Gamma_1$ when the latter is finite.
Also, we deduce from \cite[Equation (9.20)]{Woe00} that 
$$\zeta_i(R)G_i(e,e|\zeta_i(R))=\alpha_iRG(e,e|R)=\alpha_i\theta.$$
As $\alpha_2 \rightarrow 0$ we get $\alpha_1 \rightarrow 1$, so by \cite[Theorem 9.19]{Woe00} we have
$$\zeta_2(R) \leq \zeta_2(R)G_2(e,e|\zeta_2(R)) = \alpha_2\theta \leq \alpha_2\overline{\theta} \leq \alpha_2 \theta_1/\alpha_1 \to 0.$$
hence for $\alpha_2$ small enough we may arrange that $\zeta_2(R)<R_2$, so that $\mu$ is not either spectrally degenerate along $\Gamma_2$.

Next, without loss of generality we assume that both $\Gamma_1$ and $\Gamma_2$ are infinite.
Then, notice that both $\Gamma_1$ and $\Gamma_2$ contain an element of order at least 3.
Indeed, if without loss of generality we assume towards contradiction that $\Gamma_1$ does not, then every element of $\Gamma_1$ has order 2, which forces $\Gamma_1$ to be abelian. As $\Gamma_1$ is finitely generated, we must have that $\Gamma_1=(\mathbb{Z}/2\mathbb{Z})^k$. This implies that $\Gamma_1$ is finite, which is a contradiction. Thus, both $\Gamma_1$ and $\Gamma_2$ contain an element of order at least 3.
Now, by \cite[Lemma~17.9]{Woe00}, for each $i=1,2$ there exists a probability measure $\mu_i$ on $\Gamma_i$ such that
$\Psi_i(\theta_i)<1/2$.
Fixing $(\alpha_1,\alpha_2)$ such that $\theta_1/\alpha_1=\theta_2/\alpha_2=\overline{\theta}$, we have by equation \eqref{equationPhiPsi} that $\Psi(\overline{\theta})<0$, so that $\mu$ is spectrally non-degenerate.
\end{proof}

Our next goal is to provide examples where spectral non-degeneracy is automatic for relatively hyperbolic groups with virtually nilpotent parabolic subgroups of low homogeneous dimension.

By a celebrated result of Gromov \cite{Gro81}, finitely generated virtually nilpotent groups are exactly discrete groups of polynomial volume growth. That is, balls grow asymptotically like $n^d$. Bass \cite{Bas72} and Guivarc'h \cite{Gui73} independently identified $d$ as the homogeneous dimension of $\Gamma$.

\begin{Def}\label{defrankanddimension}
Let $\Gamma$ be a nilpotent group. Let
$\Gamma_1=\Gamma$ and $\Gamma_n=[\Gamma_{n-1},\Gamma]$. Let $N_{\Gamma}$ be the nilpotency class of $\Gamma$, which is the largest integer such that $\Gamma_N$ is not trivial. 
\begin{enumerate}
    \item The \emph{rank} of $\Gamma$ is given by
$$
\mathrm{rank}(\Gamma)=\sum_{k=1}^{N_\Gamma}\mathrm{rank}\left (\Gamma_k/\Gamma_{k+1}\right).
$$

    \item The \emph{homogeneous dimension} of $\Gamma$ is given by
$$
    d=\sum_{k=1}^{N_{\Gamma}}k \cdot \mathrm{rank} \big(\Gamma_k/\Gamma_{k+1}\big ).
$$
\end{enumerate}
When $\Gamma$ is virtually nilpotent, we define its homogeneous dimension as the homogeneous dimension of a finite index nilpotent subgroup.
\end{Def}

Note that when $\Gamma$ is nilpotent, all groups $\Gamma_k/\Gamma_{k+1}$ are finitely generated abelian groups and must therefore have a well defined rank. Moreover, when $\Gamma$ is virtually nilpotent the above definitions turn out to be independent of the finite index nilpotent subgroup. This can be proved by direct computations, but this also follows from the fact that $d$ is the degree of the growth of balls, which is invariant under quasi-isometry, and therefore invariant up to finite index.

In \cite[Proposition~6.1]{DG21}, it is proved that if $\Gamma$ is hyperbolic relative to virtually \emph{abelian} parabolic subgroups of rank at most $4$,
then every finitely supported, admissible and symmetric probability measure is spectrally non-degenerate. We extend this result to relatively hyperbolic groups with respect to virtually \emph{nilpotent} parabolic subgroups of homogenous dimension at most $4$.

\begin{Prop}\label{propsmallhomogeneousdimension}
Let $\Gamma$ be hyperbolic relative to a finite collection of virtually nilpotent parabolic subgroups $\Omega$, and let $\mu$ be a finitely supported, admissible and symmetric probability measure on $\Gamma$. Let $H \in \Omega$ be virtually nilpotent of homogeneous dimension at most 4. Then, $\mu$ is not spectrally degenerate along $H$.
\end{Prop}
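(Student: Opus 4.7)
The plan is to argue by contradiction, assuming $R_H(R)=1$, and to derive a contradiction with the finiteness statements in \eqref{eq:fin-para-green-der}.

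First, I would observe that under the assumption $R_H(R)=1$, the first-return kernel $P_{R,H}$ must in fact arise from a genuine probability measure on $H$. Since $P_{R,H}$ is symmetric, $H$-invariant, and sub-Markov, it extends to a self-adjoint positive bounded operator on $\ell^2(H)$. A Riesz--Thorin interpolation between $\ell^1$ and $\ell^\infty$ gives $\|P_{R,H}\|_{\ell^2 \to \ell^2} \leq c$, where $c := \sum_{h\in H} P_{R,H}(e,h) \leq 1$, and for self-adjoint positive operators this norm equals the spectral radius. Spectral radius $1$ therefore forces $c=1$, so $\mu_{R,H}(h):=P_{R,H}(e,h)$ is an admissible symmetric probability measure on $H$.

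Next, I would apply a Gaussian-type local limit theorem to the random walk driven by $\mu_{R,H}$ on the virtually nilpotent group $H$ of homogeneous dimension $d\leq 4$. After passing to a torsion-free nilpotent subgroup of finite index and restricting to an aperiodic subsequence if needed, results in the spirit of Hebisch--Saloff-Coste and Alexopoulos give two-sided bounds of the form $P_{R,H}^n(e,e) \asymp n^{-d/2}$ along a subsequence of positive density. For $d\leq 2$ this already contradicts $G_{R,H}(e,e|1)<\infty$, while for $d = 3, 4$ one has
\[
\frac{d}{dt}\bigg|_{t=1} G_{R,H}(e,e|t) = \sum_{n\geq 1} n\,P_{R,H}^n(e,e) \gtrsim \sum_{n\geq 1} n^{1-d/2} = \infty,
\]
contradicting the second half of \eqref{eq:fin-para-green-der}.

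The main obstacle will be verifying the moment hypothesis required for the local limit theorem, namely that $\mu_{R,H}$ has finite second moment with respect to the word metric on $H$. Because parabolic subgroups in relatively hyperbolic groups are typically distorted in $\Gamma$, the word length on $H$ cannot be controlled from the finite support of $\mu$ directly; instead, I would derive this moment bound from deviation estimates specific to the relatively hyperbolic setting --- Proposition~\ref{weakrelativeAncona} and the refined estimates in \cite{DG21, Dus22b} --- which permit control of $\mu_{R,H}(h)$ through the decay of $G(e,h|R)$ on $H$, and thereby allow one to sum moments on $H$. This geometric input replaces the Fourier analysis on $\mathbb{Z}^d$ used in the abelian treatment of \cite[Proposition~6.1]{DG21}.
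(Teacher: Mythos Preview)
Your overall contradiction strategy---assume spectral degeneracy, manufacture a genuine probability measure on the parabolic, and show its return probabilities decay too slowly for the Green derivative in \eqref{eq:fin-para-green-der} to be finite---matches the paper's. But two steps do not go through as written.

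First, the assertion that $P_{R,H}$ is sub-Markov is unjustified: since $R>1$, the total mass $c=\sum_h P_{R,H}(e,h)=\sum_{n\ge1}R^{\,n}\,\mathbb P_e[T_H^+=n]$ is a generating function evaluated past $1$ and could a priori exceed $1$. Your interpolation only yields spectral radius $\le c$, i.e.\ $c\ge 1$. One can repair this using amenability of $H$: once $c<\infty$ is known, Kesten applied to the normalized kernel $c^{-1}P_{R,H}$ gives spectral radius exactly $c$, hence $c=1$. But $c<\infty$ is itself a tail estimate that needs proof.

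Second, and this is the real gap, your proposed route to the moment bound is too weak. Controlling $\mu_{R,H}(h)$ by $G(e,h\mid R)=G_{R,H}(e,h\mid 1)$ and summing does not give second moments: at the spectral radius on a group of homogeneous dimension $d\ge 3$ the Green function decays only like $|h|^{\,2-d}$, and $\sum_h |h|_H^{\,2}\,|h|_H^{\,2-d}$ diverges for every $d$. The relative Ancona inequalities of Proposition~\ref{weakrelativeAncona} concern Green-function multiplicativity along transition points between parabolic cosets; they say nothing about decay \emph{within} a single parabolic coset, which is what you need here.

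The paper circumvents both issues by working not on $H$ but on its $\eta$-neighbourhood $N_\eta(H)\cong\mathcal N\times\{1,\dots,N_\eta\}$ for large $\eta$, where \cite[Lemma~4.6]{DG21} gives the first-return kernel $P_{R,H,\eta}$ exponential moments outright. The Fourier analysis you hoped to bypass is in fact retained: one forms a matrix $F(u)$ of characters on the abelianization of $\mathcal N$, uses spectral degeneracy together with \cite[Lemmas~4.5, 4.8, 4.9, Proposition~4.10]{DG21} to pin its Perron eigenvalue at $\lambda(0)=1$, and averages $P_{R,H,\eta}$ against the Perron eigenvectors to produce an honest $\mathcal N$-invariant probability $\tilde P$ on $\mathcal N$ with exponential moments. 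Finiteness of $\frac{d}{dt}\big|_{t=1}\widetilde G(e,e\mid t)$ is then inherited from \eqref{eq:fin-para-green-der} via \cite[Proposition~6.3, Lemma~6.2]{DG21}, and Pittet--Saloff-Coste comparison against a finitely supported auxiliary walk plus Alexopoulos' local limit theorem gives the $n^{-d/2}$ lower bound that contradicts it when $d\le 4$.
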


\begin{proof}
Let $\eta\geq0$. We consider the first return kernel $P_{R,H,\eta}$ to the $\eta$-neighbor\-hood $N_\eta(H)$ of $H$, associated with the measure $R\mu$.
We let $G_{R,H,\eta}$ be the corresponding Green function. By assumption, $H$ is virtually nilpotent.
As in \cite[Section~4]{DG21}, we may identify the $\eta$-neighborhood of $H$ as $\mathcal{N}\times \{1...,N_\eta\}$, where $\mathcal{N}$ is a finite index nilpotent subgroup of $H$.
We then write $\mathcal{A}$ for the abelianization of $\mathcal{N}$ which is thus a finitely generated abelian group of rank $d_\mathcal{A}$ and we set
$\pi$ the projection from $\mathcal{N}$ to $\mathbb Z^{d_\mathcal{A}}$.
Following \cite{DG21}, for fixed $u\in \mathbb R^{d_\mathcal{A}}$, and $j,k\in \{1,...,N_\eta\}$, we define
$$F_{j,k}(u)=\sum_{x\in \mathcal{N}}P_{R,H,\eta}((e,j),(x,k))\mathrm{e}^{\pi(x)\cdot u}.$$
The matrix $F(u)$ with entries $F_{j,k}(u)$ is irreducible and has a dominant eigenvalue that we denote by $\lambda(u)$.
We also consider a left eigenvector $\nu(u)$ and a right eigenvector $C(u)$ associated with $\lambda(u)$ and we normalize them by declaring that $\nu(u)\cdot C(u)=1$.

By definition, spectral degeneracy along $H$ means that for $\eta=0$, the spectral radius of $P_{R,H,\eta}$ is 1.
By \cite[Lemma~4.9]{DG21}, this implies the same thing for every positive $\eta$.
Consequently, by \cite[Lemma~4.8]{DG21}, the minimum of the function $\lambda(u)$ is 1.
Also by \cite[Proposition~4.10]{DG21}, the 1-Martin boundary of $P_{R,H,\eta}$ is reduced to a point, and this implies by \cite[Lemma~4.5]{DG21} that the set of $u$ such that $\lambda(u)=1$ is reduced to a point.
Since the initial random walk determined by $\mu$ is symmetric, we see that $F(u)$ is symmetric and so the minimum of $\lambda$ is necessarily reached at $u=0$.
We conclude that
\begin{equation}\label{equationeigenvectors}
    \nu(0)F(0)C(0)=1.
\end{equation}

Now define the averaged transition kernel on $\mathcal{N}$ by setting
$$\tilde{P}(x,y):=\sum_{j,k}\nu_j(0) P_{R,H,\eta}((x,j),(y,k))C_k(0).$$
Equation~(\ref{equationeigenvectors}) can then be reformulated as
$$\sum_{x\in \mathcal{N}}\tilde{P}(e,x)=1.$$
Moreover, as the initial random walk determined by $\mu$ is invariant by a subgroup action, we get that the first return kernel
$P_{R,H,\eta}$ is $\mathcal{N}$-invariant, and hence so is $\tilde{P}$.
Therefore, $\tilde{P}$ is a random walk on $\mathcal{N}$ which is determined by some symmetric probability measure $\mu_{\mathcal{N}}$ on $\mathcal{N}$.

By \cite[Lemma~4.6]{DG21}, if $\eta$ is chosen large enough, then $P_{R,H,\eta}$ has exponential moments.
We fix such an $\eta$. It follows that $\tilde{P}$ also has exponential moments, since the coordinates of $\nu(0)$ and $C(0)$ are finite.
Finally, by \cite[Proposition~6.3]{DG21} we get that,
$$\sum_{j,k}\sum_{x\in \mathcal{N}}G_{R,H,\eta}((e,j),(x,k)|1)G_{R,H,\eta}((x,k),(e,j)|1)$$
is finite.
We let $G_{R,H,\eta}(x,y|1)$ be the matrix whose $(j,k)$ entry is given by $G_{R,H,\eta}((x,j)(y,k)|1)$.
Denoting by $\widetilde G$ the Green function associated with $\tilde{P}$, we get that
$$\sum_{x\in \mathcal{N}}\widetilde G(e,x|1)\widetilde G(x,e|1)=\sum_{x\in \mathcal{N}}\nu\cdot G_{R,H,\eta}(e,x|1)\cdot C \cdot \nu \cdot G_{R,H,\eta}(x,e|1)\cdot C.$$
Recalling that $\nu$ and $C$ are normalized so that $\nu\cdot C=1$ and that they have bounded coordinates, we get that
$$\sum_{x\in \mathcal{N}}\widetilde G(e,x|1)\widetilde G(x,e|1)$$
is finite.
By \cite[Lemma~6.2]{DG21}, we get that $\frac{d}{dt}|_{t=1} \widetilde G(e,e|t)$ is finite as well.

To summarize, we constructed a symmetric probability measure $\mu_{\mathcal{N}}$ whose support (which is not necessarily finite) generates $\mathcal{N}$ as a semigroup, where $\mathcal{N}$ is a finite index nilpotent subgroup of $H$ with exponential moments, and such that the associated Green function has finite derivative at $1$.
Let $\nu_{H}$ be an auxiliary finitely supported, admissible, symmetric probability measure on $H$.
By Pittet and Saloff-Coste comparison theorems \cite{PS-C00} (see also \cite[Theorem~15.1]{Woe00}), we have
$$\mu_{\mathcal{N}}(e)^{*n}\succeq \nu_{H}^{*n}(e),$$
which means that there exist $C_1,C_2,C_3>0$ such that sufficiently large $n\in \mathbb{N}$ we have,
$$\mu_{\mathcal{N}}^{*n}(e)\geq C_1 \sup\big\{\nu_{H}^{*k}(e),C_2n \leq k \leq C_3n\big\}.$$
Since $\nu_H$ has finite support, by a result of Alexopoulos \cite[Corollary 1.17]{Alexopoulos}, there exists $C>0$ such that $\nu_{H}^{*k}(e) \sim Ck^{-d/2}$ as $k\rightarrow \infty$. Thus, we find that
$$\mu_{\mathcal{N}}^{*n}(e)\geq C' n^{-d/2}$$
for some constant $C'$,
where $d$ is the homogeneous dimension of $H$.
When $d\leq 4$, this implies that the derivative at 1 of the Green function $t\mapsto \widetilde G(e,e|t)$ is infinite, arriving at a contradiction. Hence, we see that $\mu$ is not spectrally degenerate.
\end{proof}

Let $M$ be a geometrically finite Riemannian manifold of pinched negative curvature. Then, the fundamental group $\pi_1(M)$ is relatively hyperbolic with respect to the cusp stabilizers, see \cite{Bow95} and \cite{Bow12}.

\begin{Cor}
Let $M$ be a geometrically finite Riemannian manifold of pinched negative curvature and let $\mu$ be a finitely supported, admissible and symmetric probability measure on $\pi_1(M)$.
Assume that either
\begin{enumerate}
    \item $M$ has constant negative curvature and $\mathrm{dim}( M)\leq 5$; or,
    \item $\mathrm{dim}(M)\leq 4$.
\end{enumerate}
Then $\mu$ is a spectrally non-degenerate random walk on $\pi_1(M)$.
\end{Cor}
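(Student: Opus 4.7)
The plan is to deduce both cases directly from Proposition~\ref{propsmallhomogeneousdimension}: it suffices to check that every maximal parabolic subgroup $H$ of $\pi_1(M)$ is virtually nilpotent of homogeneous dimension at most $4$. Virtual nilpotency is standard in this setting: the Bowditch structure theorem for cusps in pinched negative curvature identifies each cusp stabilizer with a virtually nilpotent group, and we may fix a torsion-free nilpotent subgroup $H_0\leq H$ of finite index.

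Next, the Hirsch length $h(H_0)$ can be bounded in terms of $n=\dim M$ as follows. In the universal cover $\widetilde M$, which is a Hadamard manifold, the horosphere $\Sigma$ centered at the cusp point is smoothly diffeomorphic to $\mathbb{R}^{n-1}$ and $H_0$ acts on it freely, properly discontinuously and cocompactly by geometric finiteness. The quotient $\Sigma/H_0$ is then a compact aspherical $(n-1)$-manifold of type $K(H_0,1)$, so the cohomological dimension of $H_0$ is at most $n-1$. For a torsion-free finitely generated nilpotent group cohomological dimension coincides with Hirsch length, giving $h(H_0)\leq n-1$.

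The conclusion then reduces to a case analysis. In case (1), horospheres in constant negative curvature are flat, so Bieberbach's theorem forces $H_0$ to be virtually $\mathbb{Z}^{n-1}$; hence $H$ is virtually abelian of rank at most $n-1\leq 4$, and for virtually abelian groups the homogeneous dimension equals the rank. In case (2), $h(H_0)\leq 3$, so by Mostow--Malcev $H_0$ is a cocompact lattice in a simply connected nilpotent Lie group $G$ of real dimension at most $3$. The only such Lie groups are $\mathbb{R}^k$ for $k\leq 3$ and the $3$-dimensional Heisenberg group $\mathbb{H}_3$; a direct lower-central-series computation gives homogeneous dimension $k\leq 3$ and $1\cdot 2+2\cdot 1=4$ respectively. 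Either way the homogeneous dimension of $H$ is at most $4$, so Proposition~\ref{propsmallhomogeneousdimension} applied to every cusp stabilizer yields spectral non-degeneracy of $\mu$.

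The step requiring the most care is the Hirsch length bound $h(H_0)\leq n-1$: in variable pinched negative curvature horospheres are not flat, and naive growth comparisons are too weak, since the Heisenberg cusps of complex hyperbolic manifolds already show that a $3$-dimensional horosphere can carry a cocompact nilpotent action whose homogeneous dimension equals $4$. Combining contractibility of horospheres in Hadamard manifolds with the coincidence of cohomological dimension and Hirsch length for torsion-free nilpotent groups bypasses this obstruction cleanly.
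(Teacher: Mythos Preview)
Your proof is correct and follows the same overall strategy as the paper: verify that every parabolic subgroup is virtually nilpotent of homogeneous dimension at most $4$, then invoke Proposition~\ref{propsmallhomogeneousdimension}. Case~(1) is handled identically.

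For case~(2) the tactics differ. To bound the rank (Hirsch length) by $n-1$, the paper cites \cite[Main Theorem, Section~1.5]{BK81} directly, whereas you argue via the topology of horospheres: the quotient $\Sigma/H_0$ is a closed aspherical $(n-1)$-manifold, so $\mathrm{cd}(H_0)\leq n-1$, and for torsion-free nilpotent groups cohomological dimension equals Hirsch length. For the subsequent case analysis the paper works intrinsically with the lower central series, invoking \cite[Lemma~6.6]{DGdrift} to get $\mathrm{rank}(\Gamma_1/\Gamma_2)\geq 2$ and then enumerating the possibilities for a nilpotent group of rank at most $3$; you instead pass to the Mal'cev completion and classify simply connected nilpotent Lie groups of real dimension at most $3$. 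Both routes yield the same list ($\mathbb Z^k$ for $k\leq 3$ and the Heisenberg lattice) and the same bound of $4$ on the homogeneous dimension. Your argument is slightly more conceptual and avoids the auxiliary lemma on the abelianization rank; the paper's is more self-contained within the discrete-group setting.
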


\begin{proof}
There are two cases. Either $M$ has constant negative curvature, or variable negative curvature. If $M$ has constant negative curvature, the case is treated as in \cite[Theorem~1.5]{DG21}. The result follows from the fact that horospheres can be given a Euclidean structure of dimension at most $\mathrm{dim}(M)-1$. Assuming that $\mathrm{dim} (M)\leq 5$, this implies that cusp stabilizers are virtually abelian of rank at most $4$ (see \cite[Chapter~5]{Rat06} for more details).

Thus, let us consider the case of variable negative curvature. Let $H$ be a parabolic subgroup of $\pi_1(M)$. The assumption that $M$ is geometrically finite implies that $H$ acts co-compactly on horospheres centered at the parabolic point fixed determining $H$.
By \cite[Main Theorem, Section~1.5]{BK81}, $H$ contains a finite index torsion free nilpotent group $\Gamma$ of rank $\mathrm{dim}(M)-1$.

Let $N_\Gamma$ be the nilpotency class of $\Gamma$ and recall that we defined the rank of a virtually nilpotent group in Definition~\ref{defrankanddimension}.
Let us prove that whenever $\mathrm{rank}(\Gamma)\leq 3$, the homogeneous dimension of $\Gamma$ is at most $4$. First, we may assume that $\Gamma$ is not virtually cyclic, for otherwise its homogeneous dimension is at most $1$. Then, according to \cite[Lemma~6.6]{DGdrift}, the rank of $\Gamma_1/\Gamma_2$ is at least 2.
We first deduce that $N_\Gamma$ is at most 2.
Furthermore, if $\mathrm{rank}(\Gamma)=2$, then we necessarily have that $\Gamma$ is virtually $\mathbb Z^2$ and its homogeneous dimension is then also $2$.
Assuming now that $\mathrm{rank}(\Gamma)=3$, we only have two possibilities. Either $\mathrm{rank}(\Gamma_1/\Gamma_2)=3$, in which case $\Gamma$ is virtually $\mathbb Z^3$ and has homogeneous dimension $3$, or $\mathrm{rank}(\Gamma_1/\Gamma_2)=2$ and $\mathrm{rank}(\Gamma_2/\Gamma_3)=1$, in which case the homogeneous dimension is $4$.
\end{proof}

\begin{Exl}
In case where $M$ has varying negative curvature, the given bound $\mathrm{dim}\ M\leq 4$ is optimal to ensure that the homogeneous dimension of parabolic subgroups is at most $4$. Indeed, consider the discrete Heisenberg group $\mathrm{H}_3(\mathbb Z)$, and consider $\Gamma=\mathrm{H}_3(\mathbb Z)\times \mathbb Z$.
Then, $\Gamma$ is nilpotent and has homogeneous dimension $5$.
Furthermore, $\Gamma$ is a lattice in the nilpotent Lie group $\mathcal{N}=\mathrm{H}_3(\mathbb R)\times \mathbb R$, which has topological dimension $4$.
Now, $\mathcal{N}/\Gamma$ is a nilmanifold, so by \cite[Corollary~6]{Ont20} we get that $\Gamma$ can be realized as a cusp stabilizer of a pinched negatively curved manifold of finite volume and of dimension $5$.

When negative curvature is constant, the bound $\mathrm{dim}(M)\leq 5$ is also optimal, as for any finite volume non-compact hyperbolic manifold of dimension $6$, the cusp stabilizers are virtually $\mathbb Z^5$.
\end{Exl}

Thus, we see that spectrally non-degenerate random walks on relatively hyperbolic are abundant. In \cite{Woe00}, spectrally non-degenerate adapted random walks on free products are referred to as the ``typical case". We conjecture that if $\Gamma$ is relatively hyperbolic, and if we take $\mu_n$ as the uniform measure on $n$-balls $B_n$ (around the identity element $e$) in $\Gr(\Gamma)$, then for large enough $n$ the random walk determined by $\mu_n$ will be spectrally non-degenerate. 

\section{Asymptotics of Ratio limits} \label{s:asymp}

Suppose that $\Gamma$ is a non-elementary hyperbolic relative to a finite collection of subgroups $\Omega$, and that $\mu$ is a finitely supported, admissible, aperiodic and symmetric probability measure on $\Gamma$. To analyze the ratio-limit boundary for random walks we will need to know the asymptotic behavior of derivatives of Green function as $r\rightarrow R$. This will lead us to a formula for the ratio-limit kernels which involves the following iterated Green sums. 

Following \cite{Dus22a, DPT+}, we say that the random walk determined by $\mu$ is convergent if $\lim_{r\rightarrow R}G'(e,e|r)$ exists. This occurs if and only if $\lim_{r\rightarrow R}G'(x,y|r)$ exists for any $x,y \in \Gamma$. If $\mu$ is not convergent, we will say that it is divergent. When $\Gamma$ is hyperbolic relative to virtually abelian subgroups and $\mu$ is convergent, we define its \textit{spectral degeneracy rank} $d$ as the smallest rank of a parabolic subgroup along which $\mu$ is spectrally degenerate.

For $x,y\in \Gamma$ let us denote
$$
I^{(s)}(x,y|r) := \sum_{x_1,...,x_s \in \Gamma} G(x,x_1|r)G(x_1,x_2|r) ... G(x_{s-1},x_s|r)G(x_s,y|r).
$$

By \cite[Proposition 5.2]{DPT+}, if there is $s \in \mathbb{N}$ which is the smallest integer with $\underset{r \rightarrow R}{\lim} G^{(s)}(x,y|r) = \infty$ (or equivalently $\underset{r \rightarrow R}{\lim} I^{(s)}(x,y|r) = \infty$), then there is a constant $C>0$ independent of $x,y\in \Gamma$ so that
$$
I^{(s)}(x,y|r) \underset{r \rightarrow R}{\sim} C \cdot G^{(s)}(x,y|r).
$$

To arrive at a formula for the ratio-limit kernels involving the expressions $I^{(s)}(x,y|r)$, we will need to relate the asymptotic behavior for higher derivatives of Green functions with that of $P^n(x,y)$. This was established by the second author and collaborators in a sequence of papers \cite{Dus22a, Dus22b, DPT+}.

\begin{enumerate}
\item When the random walk is spectrally non-degenerate, by \cite[Corollary~7.1]{Dus22b}, we know that $s=1$ is the smallest integer such that we have $\underset{r \rightarrow R}{\lim} G^{(s)}(x,y|r) = \infty$ and for any $x,y \in \Gamma$ there exists $\beta(x,y) >0$ such that
\begin{equation*}
G'(x,y|r) \underset{r\rightarrow R}{\sim} \frac{\beta(x,y)}{\sqrt{R - r}}; \ \ \text{or,} 
\end{equation*}

\item when $\Gamma$ is hyperbolic relative to virtually abelian subgroups and $\mu$ convergent, with spectral degeneracy rank $d$, by \cite[Corollary~6.2]{DPT+}, we know that $s = \lceil \frac{d}{2} \rceil -1$ is the smallest integer such that we have $\underset{r \rightarrow R}{\lim} G^{(s)}(x,y|r) = \infty$ and for any $x,y \in \Gamma$ there exists $\beta(x,y) >0$ such that either
\begin{equation} \label{eq:sqrt}
G^{(s)}(x,y|r) \underset{r\rightarrow R}{\sim} \frac{\beta(x,y)}{\sqrt{R - r}}, \ \text{if } d \ \text{is odd, or} 
\end{equation}
\begin{equation} \label{eq:log}
G^{(s)}(x,y|r) \underset{r\rightarrow R}{\sim} \beta(x,y) \cdot \log \Big(\frac{1}{{R - r}} \Big)\ \text{if } d \ \text{is even}.
\end{equation}
\end{enumerate}

In the case where equation \eqref{eq:sqrt} is satisfied, following the calculations in \cite[Section~9]{GL13} (or equivalently those that are based on Karamata's Tauberian theorem \cite[Corollary~1.7.3]{BGT87}), there is a constant $C>0$ independent of $x,y \in \Gamma$ such that
\begin{equation} \label{eq:ll}
P^{n}(x,y) \underset{n \rightarrow \infty}{\sim} C \cdot \beta(x,y) R^{-n}n^{-\frac{d}{2}}.
\end{equation}

In the case where equation \eqref{eq:log} is satisfied, following the calculation performed after \cite[Corollary 6.1]{DPT+}), we also get a constant $C>0$ independent of $x,y\in \Gamma$ such that equation \eqref{eq:ll} holds.

Since we need to know that the constant $C>0$ is independent of $x,y\in \Gamma$, let us see how we obtain equation \eqref{eq:ll} from equation \eqref{eq:log}. Indeed, taking the slowly varying function ``$\log$" in \cite[Corollary 1.7.3]{BGT87} we get
$$
\sum_{k=0}^n k^sR^kP^{k}(x,y) \underset{n \rightarrow \infty}{\sim} R^s \cdot \beta(x,y) \cdot \log(n)
$$
By \cite[Corollary 9.4]{GL13}, for $x\neq y$ there exist $\delta >0$ and a non-increasing sequences $(q_n(x,y))_n$ such that
$$
R^nP^{n}(x,x) = q_n(x,x) + O(e^{-\delta n}), \ \text{and}
$$
$$
R^n(P^{n}(x,x) + P^{n}(x,y)) = q_n(x,y) + O(e^{-\delta n}).
$$
Thus, we deduce that
$$
\sum_{k=0}^n k^s q_k(x,x) \underset{n \rightarrow \infty}{\sim} R^s \beta(x,y)  \log(n)
$$
Hence, by the same proof as in \cite[Lemma 9.5]{GL13} (with $\beta =1$) we get that
$$
n^sq_n(x,x) \underset{n \rightarrow \infty}{\sim} R^s \beta(x,x)n^{-1},
$$
so we get that
$$
P^{n}(x,x) \underset{n \rightarrow \infty}{\sim} \beta(x,x) R^{s-n} n^{-s-1}.
$$

Now, since 
$$
G^{(s)}(x,x|r) + G^{(s)}(x,y|r) \underset{r \rightarrow R}{\sim} (\beta(x,x) + \beta(x,y)) \log \Big(\frac{1}{R -r} \Big),
$$
we may again apply \cite[Corollary 1.7.3]{BGT87} to get that
$$
\sum_{k=0}^n k^s q_k(x,y) \underset{n \rightarrow \infty}{\sim} (\beta(x,x) + \beta(x,y)) \log (n),
$$
so that again the same proof as in \cite[Lemma 9.5]{GL13} (with $\beta =1$) implies that 
$$
n^s q_n(x,y) \underset{n \rightarrow \infty}{\sim} R^s \cdot (\beta(x,x) + \beta(x,y))n^{-1}.
$$
Thus, 
$$
P^{n}(x,x) + P^{n}(x,y) \underset{n \rightarrow \infty}{\sim} (\beta(x,x) + \beta(x,y))R^{s-n}n^{-s-1},
$$
so, we finally get that for any $x,y\in \Gamma$
\begin{equation*}
P^{n}(x,y) \underset{n \rightarrow \infty}{\sim} \beta(x,y) R^{s-n}n^{-\frac{d}{2}}.
\end{equation*}
Hence, we obtain equation \eqref{eq:ll}. Such local limit theorems are then used to obtain the following result.

\begin{Cor} \label{c:ratio-limit-iterated-sums}
Let $\Gamma$ be a finitely generated non-elementary relatively hyperbolic group, and assume that $\mu$ is a finitely supported, admissible, aperiodic and symmetric probability measure on $\Gamma$. Suppose either that 
\begin{enumerate}
\item the random walk is spectrally non-degenerate, in which case we take $s=1$ or;
\item that $\Gamma$ is hyperbolic relative to virtually abelian subgroups, with $\mu$ convergent, with spectral degeneracy rank $d$, in which case we take $s = \lceil \frac{d}{2} \rceil -1$.
\end{enumerate}
Then, the ratio-limit kernel satisfies
$$
H(x,y) = \lim_{r\rightarrow R} \frac{I^{(s)}(x,y|r)}{I^{(s)}(e,y|r)}.
$$
\end{Cor}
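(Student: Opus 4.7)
The plan is to reduce both sides of the claimed identity to the same ratio $\beta(x,y)/\beta(e,y)$, where $\beta$ is the $x,y$-dependent factor appearing in the asymptotic expansions of both $P^{n}(x,y)$ and $G^{(s)}(x,y|r)$. The crucial point that makes this reduction work is that the multiplicative constants in the asymptotics from \cite{Dus22b, DPT+, GL13} are \emph{independent} of $x,y$.

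First I would compute $H(x,y)$ directly from the local limit theorem. In either case (1) or (2), the preceding discussion culminates in the uniform estimate $P^{n}(x,y) \sim C \cdot \beta(x,y) R^{-n} n^{-d/2}$ as $n \to \infty$, with $C>0$ independent of $x,y$. Dividing the asymptotic for $(x,y)$ by the one for $(e,y)$, the common factor $CR^{-n}n^{-d/2}$ cancels, giving
\begin{equation*}
H(x,y) \;=\; \lim_{n\to\infty}\frac{P^{n}(x,y)}{P^{n}(e,y)} \;=\; \frac{\beta(x,y)}{\beta(e,y)}.
\end{equation*}
In particular the limit exists as $n\to\infty$ (SRLP holds), so there is no issue with the ultrafilter in \eqref{eq:rl}.

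Second I would show that the iterated-Green-sum quotient on the right has the same limit. By \cite[Proposition~5.2]{DPT+}, the integer $s$ chosen in the statement is the smallest one for which $G^{(s)}(x,y|r)\to\infty$ as $r\to R$, and $I^{(s)}(x,y|r) \sim C' \cdot G^{(s)}(x,y|r)$ with $C'$ independent of $x,y$. Combining this with the asymptotics $G^{(s)}(x,y|r) \sim \beta(x,y)\,f(r)$, where $f(r)=(R-r)^{-1/2}$ in case (1) and in case (2) for $d$ odd, or $f(r)=\log(1/(R-r))$ in case (2) for $d$ even, the common factor $C'f(r)$ cancels in the ratio to yield
\begin{equation*}
\lim_{r\to R}\frac{I^{(s)}(x,y|r)}{I^{(s)}(e,y|r)} \;=\; \frac{\beta(x,y)}{\beta(e,y)}.
\end{equation*}
Matching this with the previous display finishes the proof.

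There is essentially no new obstacle, since all the analytic content has been established elsewhere. The one point worth verifying carefully is that the function $\beta$ appearing in the $r\to R$ expansion of $G^{(s)}$ is \emph{the same} $\beta$ appearing in the $n\to\infty$ asymptotic of $P^{n}$; this is precisely the Karamata Tauberian computation indicated immediately before the statement of the corollary (invoking \cite[Corollary~1.7.3]{BGT87} together with \cite[Corollary~9.4 and Lemma~9.5]{GL13}), which in the logarithmic case passes through the comparison of $P^{n}(x,x)+P^{n}(x,y)$ with $P^{n}(x,x)$ to isolate $\beta(x,y)$. Hence no additional argument is needed beyond assembling the uniform asymptotics.
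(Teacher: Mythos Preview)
Your proposal is correct and follows essentially the same approach as the paper: both sides of the identity are reduced to $\beta(x,y)/\beta(e,y)$ by combining the local limit theorem \eqref{eq:ll} with the Green derivative asymptotics \eqref{eq:sqrt} or \eqref{eq:log} and the relation $I^{(s)}\sim C'\,G^{(s)}$, using throughout that the multiplicative constants are independent of $x,y$. Your write-up is more detailed than the paper's one-line proof, and your explicit remark that the Tauberian step is what identifies the two occurrences of $\beta$ is exactly the content of the discussion preceding the corollary.
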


\begin{proof}
By combining equations \eqref{eq:sqrt} and \eqref{eq:ll} in the first case, and either \eqref{eq:sqrt} and \eqref{eq:ll} when $d$ is odd or \eqref{eq:log} and \eqref{eq:ll} when $d$ is even in the second case, we get that
\begin{equation*}
    H(x,y) = \frac{\beta(x,y)}{\beta(e,y)} = \lim_{r\rightarrow R}\frac{I^{(s)}(x,y|r)}{I^{(s)}(e,y|r)}. \qedhere
\end{equation*}
\end{proof}

In \cite[Question 7.1(a)]{Woe21}, it is asked under which conditions the conclusion of Corollary \ref{c:ratio-limit-iterated-sums} holds with $s=1$. We give the following sufficient condition.

\begin{Prop} \label{p:limit-harmonic}
Let $\Gamma$ be a finitely generated non-elementary relatively hyperbolic group, and assume that $\mu$ is a finitely supported, admissible and symmetric probability measure on $\Gamma$. Let $0<s\in \mathbb{N}$ be so that for any $0\leq k<s$ we have that $\lim_{r\rightarrow R}G^{(s)}(x,y|r)$ exists. Then, for all $0\leq k<s$ the function $\mathfrak{H}_k$ given for $x,y\in \Gamma$ by
$$
\mathfrak{H}_k(x,y):= \lim_{r\rightarrow R} \frac{I^{(k)}(x,y|r)}{I^{(k)}(e,y|r)}
$$
is not $R^{-1}$-harmonic. In particular, if there exists $s\in \mathbb{N}$ which is the smallest integer satisfying $H(x,y) = \mathfrak{H}_s(x,y)$ (for some non-principle ultrafilter $\omega$), then $\lim_{r\rightarrow R}G^{(s)}(x,y|r) = \infty$.
\end{Prop}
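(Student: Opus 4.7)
The plan is to use the first-step decomposition of the Green function to derive a clean recursion relating $I^{(k)}$ and $I^{(k-1)}$ under the Markov operator $P$, and then read off the obstruction to $R^{-1}$-harmonicity directly from this recursion. Starting from $G(x,y|r) = \delta_{xy} + r\sum_z P(x,z) G(z,y|r)$, which rearranges to $\sum_z P(x,z) G(z,y|r) = r^{-1}(G(x,y|r) - \delta_{xy})$, I would apply this identity to the leftmost factor in the definition of $I^{(k)}(z,y|r)$ and collapse the resulting $\delta$-term. This yields, for every $r\in [1,R)$ and every $k\geq 0$ (using the convention $I^{(-1)}(x,y|r) := \delta_{xy}$),
\begin{equation*}
\sum_z P(x,z)\, I^{(k)}(z,y|r) \;=\; \frac{1}{r}\, I^{(k)}(x,y|r) \;-\; \frac{1}{r}\, I^{(k-1)}(x,y|r).
\end{equation*}
Since $\mu$ is finitely supported, the sum on the left is actually finite, so no uniform-convergence issue arises when we later pass to $r\to R$.

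Next I would take the limit $r\to R$ in this recursion. The hypothesis that $G^{(j)}(x,y|r)$ has a finite limit at $R$ for $0\leq j < s$ translates immediately to finiteness of $I^{(j)}(x,y|R)$: both are power series in $r$ whose coefficients are $\binom{n+j}{j}P^n(x,y)$ and $\binom{n}{j}P^n(x,y)$ respectively, which are asymptotically equivalent as $n\to\infty$, so each series is finite at $r=R$ if and only if the other is. Dividing the limiting identity by $I^{(k)}(e,y|R)$ gives, for every $0\leq k<s$,
\begin{equation*}
\sum_z P(x,z)\, \mathfrak{H}_k(z,y) \;=\; \frac{1}{R}\mathfrak{H}_k(x,y) \;-\; \frac{1}{R}\cdot \frac{I^{(k-1)}(x,y|R)}{I^{(k)}(e,y|R)}.
\end{equation*}
The subtracted term is strictly positive for $k\geq 1$ (as $I^{(k-1)}(x,y|R)$ is a convergent sum of strictly positive quantities), and equals $\delta_{xy}/(R\,G(e,y|R))$ for $k=0$, which is positive at $x=y$. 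In either case this defect is not identically zero, so $\mathfrak{H}_k(\cdot,y)$ is not $R^{-1}$-harmonic.

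For the \emph{in particular} clause I would argue by contrapositive. Assume $s$ is the smallest integer with $H(x,y) = \mathfrak{H}_s(x,y)$, and suppose for contradiction that $\lim_{r\to R}G^{(s)}(x,y|r) < \infty$. Then the hypothesis of the first part is satisfied with $s+1$ in place of $s$, and the conclusion forces $\mathfrak{H}_s(\cdot,y)$ to fail $R^{-1}$-harmonicity. On the other hand, $H(x,y) = \lim_{m\to\omega} P^m(x,y)/P^m(e,y)$ together with Gerl's ratio theorem $P^{m+1}(e,y)/P^m(e,y)\to R^{-1}$ shows that $H(\cdot,y)$ \emph{is} $R^{-1}$-harmonic, since $\sum_z P(x,z)H(z,y) = \lim_{m\to\omega} P^{m+1}(x,y)/P^m(e,y) = R^{-1}H(x,y)$. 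This contradicts $H = \mathfrak{H}_s$ and yields $\lim_{r\to R}G^{(s)}(x,y|r)=\infty$.

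The argument is largely formal once the recursion is in hand, and I do not expect a genuine obstacle; the only delicate bookkeeping is the passage from finiteness of $G^{(j)}(e,e|R)$ to finiteness of $I^{(j)}(x,y|R)$ for arbitrary $x,y$, which follows either by comparing the two power-series expansions term by term, or by invoking the Harnack inequalities recalled after \eqref{defmetricMartin} to transfer finiteness at $(e,e)$ to all pairs of points.
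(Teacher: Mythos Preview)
Your argument is correct and in fact cleaner than the paper's. Both proofs exhibit a strictly positive superharmonicity defect for $x\mapsto I^{(k)}(x,y|R)$, but they reach it differently. The paper introduces auxiliary functions $F_1=\tfrac{d}{dr}(rG)$, $F_k=\tfrac{d}{dr}(r^2F_{k-1})$, invokes the identity $F_k=k!\,r^{k-1}I^{(k)}$, and proceeds by induction on $k$: from $F_{k+1}=2rF_k+r^2F_k'$ one writes the defect of $F_{k+1}$ as a positive combination of the defects of $F_k$ and $F_k'$, the latter being superharmonic because it is (implicitly) a nonnegative combination of Green derivatives $G^{(j)}$ with $j\leq k+1$. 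Your route bypasses all of this: applying $G(x,w|r)=\delta_{xw}+r\sum_zP(x,z)G(z,w|r)$ to the leftmost factor of $I^{(k)}$ gives the exact recursion
\[
\sum_zP(x,z)\,I^{(k)}(z,y|r)=r^{-1}I^{(k)}(x,y|r)-r^{-1}I^{(k-1)}(x,y|r),
\]
so the defect $r^{-1}I^{(k-1)}(x,y|r)$ is visible at once, strictly positive everywhere for $k\geq 1$ and equal to $r^{-1}\delta_{xy}$ for $k=0$. Passing to $r=R$ is unproblematic since the left side is a finite sum and all terms are monotone in $r$. The ``in particular'' clause follows exactly as you say.

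One cosmetic slip: the $r^{n-j}$-coefficient of $G^{(j)}$ is $j!\binom{n}{j}P^n(x,y)$, not $\binom{n}{j}P^n(x,y)$; this constant factor is of course irrelevant for the convergence comparison with $I^{(j)}$, which is all you use.
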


\begin{proof}
Define inductively 
$$
F_1(x,y|r) :=\frac{d}{dr}(rG(x,y|r)),
$$
and for $k\geq 2$, 
$$
F_k(x,y|r):= \frac{d}{dr}(r^2F_{k-1}(x,y|r)).
$$
By \cite[Lemma 3.2]{Dus22a} we have that $F_k(x,y|r) = k! r^{k-1}I^{(k)}(x,y|r)$. Thus, we have for all $0\leq k < s$ that
$$
\mathfrak{H}_k(x,y) = \frac{F_k(x,y|R)}{F_k(x,y|R)}.
$$
We will prove by induction on $k$, for $0\leq k <s$, that for any $y\in \Gamma$ the function $x\mapsto \mathfrak{H}_k(x,y)$ is $R^{-1}$-superharmonic but not $R^{-1}$-harmonic.

For the base case $k=0$ we get that $\mathfrak{H}_0(x,y) = K_R(x,y) = \frac{G(x,y|R)}{G(e,y|R)}$ is the Martin kernel, and a straightforward calculation shows that for each $y\in \Gamma$, the function $x\mapsto G(x,y|R)$ is $R^{-1}$-superharmonic but not $R^{-1}$-harmonic at $y$. Thus, $\mathfrak{H}_0(x,y)$ is $R^{-1}$-superharmonic but not $R^{-1}$-harmonic.

Now let $0< k < s-1$ and assume that $\mathfrak{H}_k$ is $R^{-1}$-superharmonic but not $R^{-1}$-harmonic. Then, since $F_k(x,y|r)$ only involves Green derivatives of order at most $k$, its derivative $\frac{d}{dr}|_{r=R}(F_k(x,y|r))$ at $r=R$ would involve Green derivatives of order at most $k+1$, and hence converges as $r\rightarrow R$. Thus, $F'_k(x,y|R)$ is well-defined and is still $R^{-1}$-superharmonic in $x$. By inductive definition we get
$$
F_{k+1}(x,y|R) - R\sum_{z\in \Gamma}P(x,z)F_{k+1}(z,y|R) = 
$$
$$
2R\Big[ F_k(x,y|R) - R\sum_{z\in \Gamma}P(x,z)F_k(z,y|R)\Big] + 
$$
$$
R^2\Big[ F'_k(x,y|R) - R\sum_{z\in \Gamma}P(x,z)F'_k(z,y|R)\Big].
$$

Since $F_k(x,y|R)$ and $F'_k(x,y|R)$ are $R^{-1}$-superharmonic in $x$, the above expression is non-negative, but $F_k(x,y|R)$ is not $R^{-1}$-harmonic in $x$, so we get that $F_{k+1}(x,y|R)$ is also $R^{-1}$-superharmonic but not $R^{-1}$ harmonic in $x$. Since all expressions involved in the definition of $x\mapsto \mathfrak{H}_{k+1}(x,y)$ are finite, we see that for all $y\in\Gamma$, this function is not $R^{-1}$-harmonic.

Finally, if $s\in \mathbb{N}$ is the smallest integer such that $H(x,y) = \mathfrak{H}_s(x,y)$, since $x\mapsto H(x,y)$ is $R^{-1}$-harmonic, we must have that $\lim_{r\rightarrow R}G^{(s)}(x,y|r) = \infty$ by the above.
\end{proof}

As we will see, there are many examples with $s=1$ for which the conclusion of Corollary~\ref{c:ratio-limit-iterated-sums} can fail, even for free products of abelian groups. For instance, consider $\Gamma = \Gamma_1*\Gamma_2$ with a convergent probability measure $\mu$ on it. That is, for $x,y \in \Gamma$ we have that $\lim_{r\to R} G(x,y|r)$ and $\lim_{r\to R}I^{(1)}(x,y|r)$ exist for all $x,y\in \Gamma$. 
Such convergent measures $\mu$ do exist, see for instance \cite[Section~7, Example~D]{CG12} for examples with $\Gamma_i=\mathbb Z^{d_i}$. We will also construct such in Examples~\ref{exampleallderivativefinite} and \ref{ex:limitprobmeasures} below. Thus, by Proposition \ref{p:limit-harmonic}, we see that the conclusion of Corollary \ref{c:ratio-limit-iterated-sums} would fail in these examples for $s=1$.
As we saw, in many cases there is a smallest $s\in \mathbb{N}$ such that $I^{(s)}(x,y|r)$ tends to infinity as $r\to R$. Our next goal is to construct examples of random walks such that there is no such $s$, i.e.\ $\lim_{r\rightarrow R} G^{(s)}(x,y|r)$ exists for \emph{any} $s\in \mathbb{N}$. Using Proposition~\ref{p:limit-harmonic}, we will be able to deduce that the conclusion of Corollary~\ref{c:ratio-limit-iterated-sums} can fail for \emph{any} $s\in \mathbb{N}$. We will need the following result of Varapoulos. 

Let $\Gamma$ be a group of exponential volume growth and let $\mu$ be a finitely supported, admissible and symmetric probability measure on $\Gamma$. Then, by \cite[Theorem~1]{Var91} there are constants $c,C>0$ such that
$$
\mu^{*n}(e)\leq C\mathrm{e}^{-cn^{1/3}}.
$$
More generally (see \cite[Theorem 6.7]{Woe94}), if $n$-balls around $e$ in $\Gr(\Gamma)$ satisfy
\begin{equation}\label{equationgrowth}
|B_n| \geq \mathrm{e}^{cn^\alpha}
\end{equation}
for some $0<\alpha\leq 1$, then
\begin{equation} \label{eq:upperbound}
\mu^{*n}(e)\leq C\mathrm{exp}\left(-cn^{\frac{\alpha}{\alpha+2}}\right).
\end{equation}

\begin{Exl}\label{exampleallderivativefinite}
Consider a finitely generated amenable group $\Gamma_1$ and set $\Gamma_2=\mathbb{Z}$. Consider a finitely supported, admissible and symmetric probability measures $\mu_i$, and let $\mu$ be the adapted probability measure on $\Gamma=\Gamma_1*\Gamma_2$ given by
$$
\mu=\alpha_1\mu_1+\alpha_2\mu_2.
$$
Note that since both $\Gamma_1$ and $\Gamma_2$ are amenable, we have that the spectral radii of $\mu_1$ and $\mu_2$ are $R_1=R_2=1$.

Denote by $T^+_e$ the first return time to $e$ according to the random walk determined by $\mu$.
Let
$$U(r)=\sum_{n\geq 1} \mathbb P_e[T^+_e = n] r^n$$
be the generating function of first return time to $e$. Then, according to \cite[Lemma~1.13~(a)]{Woe00} we have that $G(e,e|r)(1-U(r))=1$.
Also, by \cite[Equation (9.14)]{Woe00}, the function $\Psi$ defined by equation \eqref{defPsi} satisfies
$$\Psi(t)=\frac{1}{1+ rU'(r)-U(r)}$$
for $t:=rG(e,e|r)\leq \theta=RG(e,e|R)$. Using $G(e,e|r)(1-U(r))=1$, we get
$$\Psi(t)=\frac{G(e,e|r)^2}{rG'(e,e|r)+G(e,e|r)}.$$
Since $\Gamma$ is non-amenable, we have that $G(e,e|R)$ is finite, so we get that
that $\Psi(\theta)=0$ if and only if $\lim_{r\rightarrow R}G'(e,e|r) = \infty$.

Our first goal is to prove that for a suitable choice of $(\alpha_1,\alpha_2)$, the random walk is convergent, or equivalently that $\Psi(\theta)\neq 0$.
Rewriting $\Psi$ as
$$\Psi(t)=\frac{1}{1+\sum_{n\geq 0} (n-1)\mathbb P_e[T^+_e = n] r^n},$$
it follows that $\Psi$ is decreasing.
Thus, it is enough to prove that for a suitable choice of $(\alpha_1,\alpha_2)$ we have $\Psi(\overline{\theta})>0$, where $\overline{\theta}$ is defined in equation \eqref{defthetabar}.

Note that in our example, since the random walk on $\Gamma_2=\mathbb Z$ determined by $\mu_2$ must be recurrent, we have $G_2(e,e|1)=\infty$.
In particular, we have $\overline{\theta}=\theta_1/\alpha_1$.
Therefore, by equation \eqref{equationPhiPsi}, where $\Psi_1,\Psi_2$ are defined similarly for $\mu_1,\mu_2$ respectively, we have,
$$
\Psi(\overline{\theta})=\Psi_1(\theta_1)+\Psi_2(\alpha_2\theta_1/\alpha_1)-1=\Psi_1(\theta_1)+\Psi_2\left (\frac{(1-\alpha_1)\theta_1}{\alpha_1}\right )-1.
$$

Suppose now that $\Gamma_1$ satisfies~(\ref{equationgrowth}).
By the upper bound in equation \eqref{eq:upperbound}, for every $s\geq 0$, we have
\begin{equation}\label{Gsfiniteparabolic}
G_1^{(s)}(e,e|1)<\infty.
\end{equation}
Consequently, for $s=1$, we get that $\Psi_1(\theta_1)>0$.
Now, as $\alpha_1$ tends to 1, $\frac{(1-\alpha_1)\theta_1}{\alpha_1}$ tends to 0 and since $\Psi_2(0)=1$, we see that
$\Psi(\overline{\theta})$ tends to $\Psi_1(\theta_1)$.
In particular, for small enough $\alpha_1$, $\Psi(\overline{\theta})>0$ and so the random walk determined by $\mu$ is convergent for that choice of $\alpha_1$.

Next, we show that $\lim_{r\to R}I^{(s)}(e,e|r)$ is finite for every $s$.
For $i=1,2$, we consider the iterated Green sums for the subgroups $\Gamma_i$,

$$I_{\Gamma_i}^{(s)}(e,e|r)=\sum_{x_1,...,x_s\in \Gamma_i}G(e,x_1|r)...G(x_s,e|r).$$
By \cite[Lemma~5.7]{Dus22a}, since $\lim_{r\to R}I^{(1)}(e,e|r)$ is finite, $I^{(s)}(e,e|r)$ is bounded by quantities only involving $I^{(k)}(e,e|r)$ for $k<s$ and $I_{\Gamma_i}^{(k)}(e,e|r)$ for $k\leq s$.
Thus, by induction, to show that $\lim_{r\to R}I^{(s)}(e,e|r)$ is finite for all $s$, it suffices to show that $\lim_{r\to R}I_{\Gamma_i}^{(s)}(e,e|r)$ is finite for all $s$.

First, let us deal with $\Gamma_1$.
Using the relation between the derivatives of the Green function and the iterated sums given by \cite[Proposition 5.2]{DPT+} on the one hand and the relation between the Green functions $G$ and $G_i$ given by equation \eqref{formulaWoess} on the other hand, there are constants $C_1,C_2>0$ such that
\begin{align*}
I_{\Gamma_1}^{(s)}(e,e|r)& =\left(\frac{G(e,e|r)}{G_1(e,e|\zeta_i(r))}\right)^{s+1}\sum_{x_1,...,x_s\in \Gamma_1}G_1(e,x_1|\zeta_1(r))...G_1(x_s,e|\zeta_1(r))\\
&\leq C_1 \sum_{x_1,...,x_s\in \Gamma_1}G_1(e,x_1|\zeta_1(R))...G_1(x_s,e|\zeta_1(R))\\
&\leq C_1 C_2 G_1^{(s)}(e,e|\zeta_1(R)).
\end{align*}
Thus, we need only prove that $G_1^{(s)}(e,e|\zeta_1(R))$ is finite for all $s$. This follows from equation \eqref{Gsfiniteparabolic}.

Second, since we have $\Gamma_2=\mathbb Z$, we deduce from Proposition \ref{propsmallhomogeneousdimension} that the random walk cannot be spectrally degenerate along $\Gamma_2$. By using equation \eqref{Greencoincide},  a similar calculation as above shows that we need only show that $G_{R,\Gamma_2}^{(s)}(e,e|1)$ is finite for every $s\geq 0$. But now, spectral non-degeneracy along $\Gamma_2$ implies that $1$ is smaller than the radius of convergence of the Green function $G_{R,\Gamma_2}(e,e|r)$, so we deduce again that
$\lim_{r\to R}I_{\Gamma_2}^{(s)}(e,e|r)$ is finite. 

Thus, we have that $\lim_{r\rightarrow R}I^{(s)}(e,e|r)$ exists for all $s\geq 0$. A standard calculation then shows that $\lim_{r\to R}I^{(s)}(x,y|r)$ exists for all $x,y\in \Gamma$ and for all $s\geq 0$. 
\end{Exl}

We also get the following examples of convergent random walks on free products of $\mathbb{Z}^5$ and $\mathbb{Z}$.

\begin{Exl}\label{ex:limitprobmeasures}
Let $\Gamma = \Gamma_1 * \Gamma_2$ where $\Gamma_1 = \mathbb{Z}^5$ and $\Gamma_2=\mathbb{Z}$. Suppose $\mu_i$ are finitely supported, admissible and symmetric probability measures on $\Gamma_i$ for $i=1,2$, and let $\mu := \alpha_1 \mu_1 + \alpha_2 \mu_2$ be an adapted probability measure on $\Gamma$. Then, we will show that for small enough $\alpha_1$, the random walk $\mu$ is spectrally non-degenerate, and for large enough $\alpha_1$, the random walk $\mu$ is spectrally degenerate and convergent along $\mathbb Z^5$.

Indeed, we only need to prove that $\Psi(\bar\theta)$ is negative for small enough $\alpha_1$ and positive for large enough $\alpha_1$. Following the same line of reasoning as in Example~\ref{exampleallderivativefinite}, we have
$$\Psi(\bar\theta)=\Psi_1(\theta_1)+\Psi_2\left(\frac{1-\alpha_1}{\alpha_1}\theta_1\right)-1.$$
Since $G_1(e,e|1)$ is finite, $\Psi_1(\theta_1)>0$.
Moreover, recall that $\Psi_1$ is strictly decreasing and satisfies $\Psi_1(0)=1$, hence $\Psi_1(\theta_1)<1$.
On the other hand, $\lim_{t\to \theta_2} \Psi_2(t)=0$, see for instance \cite[Example~9.18~(3)]{Woe00}, while $\Psi_2(0)=1$.
We deduce that $\Psi(\bar\theta)$ converges to $\Psi_1(\theta_1)>0$ as $\alpha_1$ tends to 1 and converges to $\Psi_1(\theta_1)-1<0$ as $\alpha_1$ tends to 0.
\end{Exl}

Proposition \ref{p:limit-harmonic} together with the examples above show the limitation in the strategy initiated in \cite{Woe21} to be applied to more general groups. This is because this strategy relies on finding the asymptotic behaviour of derivatives of the Green function at its singularities, which in turns uses the fact that $\lim_{r\to R}I^{(s)}(e,e|r)=\infty$.

%%%%%%%%%%%%%%%%%%%%%%%%%%%%%%%%%%%%%%%%%%%%%%%%%%%%%%%%%%%%%

\section{Essential minimality of ratio-limit boundary.} \label{s:main}

In this section we will show that when $\Gamma$ be a non-elementary relatively hyperbolic group, and $\mu$ is a finitely supported, admissible, aperiodic, symmetric and spectrally non-degenerate probability measure on $\Gamma$, then $\overline{\partial^m_{M,R}\Gamma}$ is the unique smallest closed $\Gamma$-invariant subspace of $\partial_{\rho}\Gamma$. In order to establish this we adapt a proof strategy of Woess from \cite{Woe21}, which was used for hyperbolic groups. Considering the radically different behavior of relatively hyperbolic groups exhibited in Sections \ref{s:spect-non-deg} and \ref{s:asymp}, our proofs require new ideas. We set
$$\Phi_r(x,y):= \frac{I^{(1)}(e,e|r)}{G(x,y|r)},$$ so that by Corollary \ref{c:ratio-limit-iterated-sums} we have
\begin{equation} \label{eq:conical_rl-vs-mart}
 \frac{\Phi_{R}(x,y)}{\Phi_{R}(e,y)}:= \lim_{r \rightarrow R} \frac{\Phi_r(x,y)}{\Phi_r(e,y)} = \frac{H(x,y)}{K_{R}(x,y)}.
\end{equation}

\begin{Prop} \label{p:conical-in-rlb}
Let $\Gamma$ be a non-elementary relatively hyperbolic group, and assume that $\mu$ is a finitely supported, admissible, aperiodic and symmetric probability measure on $\Gamma$. Suppose further that the random walk is spectrally non-degenerate, and that $\xi \in \partial_{M,R}\Gamma$ is a point such that $\pi(\xi)$ is conical point in $\partial_B(\Gamma;\Omega)$. Then, for every $x\in \Gamma$, as $y\rightarrow \xi$ in $\partial_{M,R}\Gamma$ we have
$$
\frac{\Phi_{R}(x,y)}{\Phi_{R}(e,y)} \underset{n \rightarrow \infty}{\longrightarrow} 1
$$  
\end{Prop}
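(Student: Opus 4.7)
By equation \eqref{eq:conical_rl-vs-mart}, it suffices to show that $H(x,y)/K_R(x,y) \to 1$ as $y \to \xi$. Applying Corollary~\ref{c:ratio-limit-iterated-sums} with $s=1$, which is valid since $\mu$ is spectrally non-degenerate, I rewrite
$$\frac{H(x,y)}{K_R(x,y)} = \lim_{r\to R}\frac{I^{(1)}(x,y|r)/G(x,y|r)}{I^{(1)}(e,y|r)/G(e,y|r)} = \lim_{r\to R}\mathbb{E}_{\mathbb{P}_{r,y}}\!\left[\frac{K_r(x,z)}{K_r(x,y)}\right],$$
where $\mathbb{P}_{r,y}$ is the probability measure on $\Gamma$ with weights $\mathbb{P}_{r,y}(\{z\}) := G(e,z|r)G(z,y|r)/I^{(1)}(e,y|r)$. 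Note $I^{(1)}(e,y|r)\to\infty$ as $r\to R$ by spectral non-degeneracy, so $\mathbb{P}_{r,y}$ has no mass concentrated at any finite set in the limit. The task reduces to showing that this expectation tends to $1$, with the error uniform in $r \le R$.

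Next, I would partition $\Gamma$ into cells $C_0,\dots,C_N$ adapted to a word geodesic $[e,y]$ with distinguished $(\epsilon_1,\epsilon_2)$-transition points $w_0=e,w_1,\dots,w_N=y$ spaced uniformly in the relative metric. The cell $C_k$ contains those $z\in\Gamma$ whose closest transition point on $[e,z]$ is $w_k$; if $w_k$ lies in a bounded word-neighborhood of a parabolic coset $gH$, then $C_k$ contains the entire coset, otherwise $C_k$ is finite. Combining weak Ancona (Proposition~\ref{weakrelativeAncona}) with equation~\eqref{eq:fin-para-green-der}, which provides finiteness of the first-return Green function and its derivative at $1$ supplied by spectral non-degeneracy, I will establish the uniform two-sided comparison
$$\sum_{z\in C_k} G(e,z|r)G(z,y|r) \asymp G(e,w_k|r)G(w_k,y|r) \asymp G(e,y|r),$$
uniformly in $k$ and in $r\le R$; the final comparison uses weak Ancona at the transition point $w_k$. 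Summing over $k$ gives $I^{(1)}(e,y|r)\asymp N\cdot G(e,y|r)$, and hence $\mathbb{P}_{r,y}(C_k)\asymp 1/N$.

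For $z\in C_k$ with $k_1\le k\le N-k_2$, the relative geodesic from $e$ to $z$ and the relative geodesic from $x$ to $y$ $c$-fellow-travel (with $c$ depending only on $d(e,x)$) for time at least $\min(k_1,k_2)-O(1)$, since the former shares approximately the first $k$ relative steps of $[e,y]$ while the latter coincides with $[e,y]$ outside a bounded initial segment near $x$. The strong relative Ancona inequality (Proposition~\ref{strongrelativeAncona}, applied with swapped endpoints) then yields
$$\left|\frac{K_r(x,z)}{K_r(x,y)}-1\right| = \left|\frac{G(x,z|r)G(e,y|r)}{G(e,z|r)G(x,y|r)}-1\right| \le C\alpha^{\min(k_1,k_2)}$$
for constants $C>0$, $\alpha\in(0,1)$ independent of $r\le R$ and $y$. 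On the edge cells $C_k$ with $k<k_1$ or $k>N-k_2$, the ratio $K_r(x,z)/K_r(x,y)$ is uniformly bounded by a constant, by weak Ancona and the Harnack inequality.

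Assembling these estimates gives
$$\left|\mathbb{E}_{\mathbb{P}_{r,y}}\!\left[\frac{K_r(x,z)}{K_r(x,y)}\right]-1\right| \le C\alpha^{\min(k_1,k_2)} + C'\cdot\frac{k_1+k_2}{N}.$$
Letting $r\to R$ first (the bound being independent of $r$), then $y\to\xi$ so that $N\to\infty$ (a relative geodesic ray to a conical point is unbounded), and finally $k_1,k_2\to\infty$ sublinearly in $N$, the right-hand side vanishes. The main obstacle is establishing the two-sided comparison $\mathbb{P}_{r,y}(C_k)\asymp 1/N$: it is precisely spectral non-degeneracy that prevents a single parabolic cell from carrying an unbounded share of the total mass, via the bounds from \cite{Dus22a, Dus22b} that underlie equation~\eqref{eq:fin-para-green-der}, and without it the concentration of $\mathbb{P}_{r,y}$ along the geodesic required for the argument could fail.
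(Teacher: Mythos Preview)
Your overall architecture—decompose $\Gamma$ by projection onto a relative geodesic $[e,y]$, apply strong relative Ancona on each cell, and control the edge cells separately—matches the paper's strategy. The gap is in the weight estimate $\mathbb{P}_{r,y}(C_k)\asymp 1/N$, which is where your argument diverges from the paper and where it breaks.

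First, the intermediate comparison $\sum_{z\in C_k} G(e,z|r)G(z,y|r)\asymp G(e,y|r)$ cannot hold uniformly in $r\le R$: by \cite[Equation~(41), Proposition~6.3]{Dus22b} (the paper's equation~\eqref{boundsumoverGammak}), one has
\[
\sum_{z\in \Gamma_k}\frac{G(e,z|r)G(z,y|r)}{G(e,y|r)}\;\asymp\; I^{(1)}(e,e|r)\cdot \Upsilon_r\big(T^k[e,y]\big),
\]
where $\Upsilon_r(T^k[e,y])=\sum_{w\in H_k}\dfrac{G(y_{k-1},y_{k-1}w|r)G(y_{k-1}w,y_k|r)}{G(y_{k-1},y_k|r)}$. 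The factor $I^{(1)}(e,e|r)\to\infty$ as $r\to R$, so your displayed comparison fails. More importantly, after normalizing one gets $\mathbb{P}_{r,y}(\Gamma_k)\asymp \Upsilon_r(T^k[e,y])\big/\sum_j \Upsilon_r(T^j[e,y])$, and the claim $\mathbb{P}_{r,y}(C_k)\asymp 1/N$ would require $\Upsilon_r(T^k[e,y])$ to be uniformly bounded in $k$. But $\Upsilon_r$ is essentially $I^{(1)}_{r,H_k}(e,s_k|1)/G_{r,H_k}(e,s_k|1)$ for the induced walk on the parabolic $H_k$ and the jump $s_k=y_{k-1}^{-1}y_k$; even when the spectral radius of the induced walk is strictly below $1$ (spectral non-degeneracy), this ratio typically grows like $|s_k|$ (think of a random walk on $\mathbb Z^d$ at subcritical radius, where the dominant contribution to $G(e,s|1)$ comes from times $n\asymp|s|$). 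Equation~\eqref{eq:fin-para-green-der} only gives finiteness of $G_{R,H}(e,e|1)$ and its derivative at the identity; it says nothing uniform in $s$, and it holds regardless of spectral non-degeneracy, so it cannot be the mechanism you need.

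The paper sidesteps this entirely. It never claims uniform cell weights; instead it proves
\[
\big|\Phi_r(x,y)-\Phi_r(e,y)\big|\ \lesssim\ I^{(1)}(e,e|r)\sum_{k}\alpha^k\,\Upsilon_r\big(T^k[e,y]\big),\qquad
\Phi_r(e,y)\ \asymp\ I^{(1)}(e,e|r)\sum_{k}\Upsilon_r\big(T^k[e,y]\big),
\]
where the $\alpha^k$ comes from strong Ancona applied to $[x,z]$ and $[e,y]$, which fellow-travel for time $k-C(x)$ (not $\min(k_1,k_2)$ as you wrote). One then fixes $k_0$ so that $\alpha^{k_0}/(1-\alpha)<\epsilon$ and observes that for $k\le k_0$ the values $\Upsilon_r(T^k[e,y])$ depend only on the first $k_0+1$ letters of the relative geodesic, which stabilize as $y\to\xi$ since $\pi(\xi)$ is conical; hence they are bounded by a constant depending only on $\xi$ and $k_0$. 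Meanwhile $\sum_k\Upsilon_r(T^k[e,y])\gtrsim n\to\infty$ from the trivial lower bound $\Upsilon_r\gtrsim 1$. This gives the ratio $\lesssim\epsilon$ without any uniform upper bound on individual $\Upsilon_r$-values. Your argument can be repaired along exactly these lines, but the uniform $1/N$ claim must be abandoned.
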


\begin{proof}
First, as suggested in Figure \ref{fig:1}, we claim that if $x$ is fixed, then there exists a constant $C(x)$ such that if two relative geodesics $[e,y]$ and $[e,z]$ fellow-travel for a time $k$, then the relative geodesics $[x,z]$ and $[e,y]$ fellow-travel for a time $k-C(x)$.

To prove the claim, we let $w$ be the projection of $x$ on $[e,y]$. Then, we have by \cite[Lemma 4.16]{Dus22a} (see also \cite[Lemma 2.5]{Dus22b}) that any relative geodesic from $x$ to $y$ passes within a bounded distance in $\Gr(\Gamma)$ of $w$.
Since $d(w,x)$ is bounded by $|x|$, the claim follows, since by \cite[Lemma~4.14]{Dus22a} there exists $C(x)$ such that the two geodesics $[e,y]$ and $[x,y]$ $C(x)$-fellow travel, with $C(x)$ depending only on $d(w,x) \leq |x|$.

Following the computations in \cite[Subsection~6.1]{Dus22b} we decompose the sum over $z\in \Gamma$ as follows.
We write $[e,y]$ as $(e,y_1,...,y_n)$, where $n$ is the relative distance between $e$ and $y$ and we denote by $\Gamma_k$, $k\leq n$, the set of $z$ whose projection in $\Gr(\Gamma;\Omega)$ on $[e,y]$ is at $y_k$.
If there is more than one such projection, we choose the projection the one which is closest to $y$.
Then,
$$\Phi_r(e,y)=\sum_{k=0}^n\sum_{z\in \Gamma_k}\frac{G(e,z|r)G(z,y|r)}{G(e,y|r)}.$$
Now if $z\in \Gamma_k$, then $[e,z]$ and $[e,y]$ fellow travel for a time $k-1$ by \cite[Lemma~2.5]{Dus22b}.
Thus, $[x,z]$ and $[e,y]$ fellow travel for a time $k-1-C(x)$ by what we previously showed.
We then have
$$\big |\Phi_r(x,y)-\Phi_r(e,y)\big |\leq \sum_{k=0}^n\sum_{z\in \Gamma_k}\frac{G(e,z|r)G(z,y|r)}{G(e,y|r)}\left |\frac{G(x,z|r)G(e,y|r)}{G(x,y|r)G(e,z|r)}-1\right|.$$

\begin{figure}
\begin{tikzpicture}
\draw (0,0)--(3,3);
\draw (2,0)--(.7,.7);
\draw (2.6,2.6)--(1.7,3.2);
\draw (-.2,0) node{$e$};
\draw (3.2,3) node{$y$};
\draw (2.2,0) node{$x$};
\draw (1.5,3.2) node{$z$};
\end{tikzpicture}
\caption{} \label{fig:1}
\end{figure}

Assume now that $y$ converges to a point $\xi \in \partial_{M,R}\Gamma$ whose image $\pi(\xi)$ in Bowditch boundary is conical. Since $\pi$ is continuous, we know that $y$ converges to $\pi(\xi)$ in the relative metric. Hence, by strong relative Ancona inequality given in Proposition~\ref{strongrelativeAncona},
there exists $K$ depending only on $C(x)$ (i.e.\ on $x$) and $0<\alpha<1$ such that for $z\in \Gamma_k$,
$$\left |\frac{G(x,z|r)G(e,y|r)}{G(x,y|r)G(e,z|r)}-1\right|\leq K\alpha^k.$$
We thus find
\begin{equation}\label{equationboundIxy-Iey}
\big |\Phi_r(x,y)-\Phi_r(e,y)\big |\leq K\sum_{k=0}^n\alpha^k\sum_{z\in \Gamma_k}\frac{G(e,z|r)G(z,y|r)}{G(e,y|r)}.
\end{equation}

Let $H_k$ be the union of all parabolic subgroups containing $y_{k-1}^{-1}y_k$.
Then it is proved at the beginning of \cite[Subsection~6.1]{Dus22b} that
\begin{equation}\label{boundsumoverGammak}
\sum_{z\in \Gamma_k}\frac{G(e,z|r)G(z,y|r)}{G(e,y|r)}\asymp I^{(1)}(e,e|r)\sum_{w\in H_k}\frac{G(y_{k-1},y_{k-1}w|r)G(y_{k-1}w,y_k|r)}{G(y_{k-1},y_k|r)}.
\end{equation}
Only the upper bound for the sum on the left is written explicitly at the beginning of \cite[Subsection~6.1]{Dus22b}, but equation \eqref{boundsumoverGammak} is a direct consequence of \cite[Equation~(41), Proposition~6.3]{Dus22b}.
Still, let us give some details on the proof of the lower bound for the sake of completeness.
We write $y_{k-1}w$ for the projection of $z$ on $y_{k-1}H_k$.

Then as suggested in Figure \ref{fig:2}, we deduce from \cite[Lemma 4.16]{Dus22a} (see also \cite[Lemma 2.5]{Dus22b}) that a relative geodesic $[e,z]$ passes within a bounded distance of $y_{k-1}$, then of $y_{k-1}w$.
Similarly, a relative geodesic $[z,y]$ passes within a bounded distance of $y_{k-1}w$, then of $y_k$.
Finally, by definition, $[e,y]$ passes first at $y_{k-1}$ then at $y_k$.
Therefore, by weak relative Ancona inequalities,
$$G(e,z|r)\asymp G(e,y_{k-1}|r)G(y_{k-1},y_{k-1}w|r)G(y_{k-1}w,z|r),$$
$$G(z,y|r)\asymp G(z,y_{k-1}w|r)G(y_{k-1}w,y_{k}|r)G(y_{k},y|r)$$
and
$$G(e,y|r)\asymp G(e,y_{k-1}|r)G(y_{k-1},y_k|r)G(y_k,y|r).$$
Now, summing over $z\in \Gamma_k$ boils down to summing over $w\in H_k$ and over $z$ that projects on $y_{k-1}H_k$ at $y_{k-1}w$.
Moreover, for fixed $w$, the residual sum
$$\Sigma=\sum_{z}G(y_{k-1}w,z|r)G(z,y_{k-1}w|r)$$
is roughly asymptotic to $I^{(1)}(e,e|r)$. Indeed, by distinguishing elements $z$ according to their projection on $y_{k-1}H_k$ and using weak relative Ancona inequalities, we see that the ratio of $I^{(1)}(e,e|r)$ and $\Sigma$ is roughly given by $\sum_{w\in H_k}G(e,w|r)G(w,e|r)$, and the latter is uniformly finite (see the similar proof in \cite[(20)]{Dus22a}). All these inequalities together imply the desired lower bound in equation \eqref{boundsumoverGammak}.

Next, it will be convenient to use the following notations from \cite{Dus22b}.
Define the function $\Upsilon_r$ on $\Gamma$ as follows. For a point $y\in \Gamma$, let $[e,y]$ be some relative geodesic chosen according to an automaton (see \cite{Dus22b}), and let $y_1$ be the first point after $e$ in $[e,y]$. Then, we define
$$\Upsilon_r(y)=\sum_{w\in H_1}\frac{G(e,w|r)G(w,y_1|r)}{G(e,y_1|r)}.$$

\begin{figure}
\begin{tikzpicture}[scale=1.8]
\draw (-1.8,.5)--(-2,-.4);
\draw (2.2,.5)--(2,-.4);
\draw (-2,-.4)--(2,-.4);
\draw (-.7,-.8)--(-.7,-.4);
\draw[dotted] (-.7,-.4)--(-.7,0);
\draw (-.7,-.8)--(-2.3,-.8);
\draw (-.7,0)--(.8,-.1);
\draw (-.7,0)--(.5,.3);
\draw[dotted] (.8,-.1)--(.8,-.4);
\draw (.8,-.4)--(.8,-1);
\draw (.8,-1)--(1.6,-1);
\draw (.5,.3)--(.5,1.1);
\draw (.8,-.1)--(.5,.3);
\draw (-1,0)node{$y_{k-1}$};
\draw (-2.5,-.8) node{$e$};
\draw (1.1,-.15) node{$y_{k}$};
\draw (1.8,-1) node{$y$};
\draw (.5,1.3) node{$z$};
\draw (.9,.4) node{$y_{k-1}w$};
\end{tikzpicture}
\caption{} \label{fig:2}
\end{figure}

Denote by $T$ the left shift on relative geodesics starting at $e$. That is, if $\gamma=(e,\gamma_1,...,\gamma_n)$ is a relative geodesic, then $T\gamma=(e,\gamma_1^{-1}\gamma_2,...,\gamma_1^{-1}\gamma_n)$.
Then, combining~(\ref{equationboundIxy-Iey}) and~(\ref{boundsumoverGammak}), we have
\begin{equation}\label{boundusingupsilon}
\big |\Phi_r(x,y)-\Phi_r(e,y)\big |\lesssim I^{(1)}(e,e|r) K\sum_{k=0}^n\alpha^k \Upsilon_r\big (T^k[e,y]\big)
\end{equation}
whereas by \cite[Equation~(41), Proposition~6.3]{Dus22b} we have
\begin{equation}\label{equationupsilon}
\Phi_r(e,y)\asymp I^{(1)}(e,e|r) \sum_{k=0}^n \Upsilon_r\big (T^k[e,y]\big)
\end{equation}

Fix $\epsilon>0$. Then, there exists $k_0$ such that for $k\geq k_0$ we have $\frac{\alpha^k}{1-\alpha} \leq \epsilon$. Hence, by equations \eqref{boundusingupsilon} and \eqref{equationupsilon},
$$\big |\Phi_r(x,y)-\Phi_r(e,y)\big |\lesssim I^{(1)}(e,e|r) \sum_{k=0}^{k_0}\Upsilon_r\big (T^k[e,y]\big)+\epsilon \Phi_r(e,y)$$
Using again~(\ref{equationupsilon}), we rewrite this as
\begin{equation} \label{eq:last-ep-bound}
\left |\frac{\Phi_r(x,y)}{\Phi_r(e,y)}-1\right |\lesssim \frac{\sum_{k=0}^{k_0}\Upsilon_r\big (T^k[e,y]\big)}{\sum_{k=0}^{n}\Upsilon_r\big (T^k[e,y]\big)}+\epsilon.
\end{equation}
Finally, $k_0$ is fixed and if $n=|y|$ is big enough, for $k\leq k_0$ we have that $\Upsilon_r\big (T^k[e,y]\big)$ is bounded by a constant that only depends on $\xi$.
On the other hand, for every $k$,
$$\sum_{w\in H_k}\frac{G(y_{k-1},y_{k-1}w|r)G(y_{k-1}w,y_k|r)}{G(y_{k-1},y_k|r)}\gtrsim 1,$$ hence as $n$ tends to infinity,
$\sum_{k=0}^{n}\Upsilon_r\big (T^k[e,y]\big)$ tends to infinity. Thus, if we take $n = |y|$ large enough, we can arrange for the first summand in the right-hand-side of equation \eqref{eq:last-ep-bound} to be at most $\epsilon$, and get
$$\left |\frac{\Phi_r(x,y)}{\Phi_r(e,y)}-1\right |\lesssim 2\epsilon.$$
Since weak and strong relative Ancona inequalities are uniform in $r\leq R$, this last bound is also uniform in $r$, and concludes the proof.
\end{proof}

This allows us to show, when the random walk is spectrally non-deg\-enerate, that $\overline{\partial_{M,R}^m\Gamma}$ embeds $\Gamma$-equivariantly inside $\partial_{\rho}\Gamma$.

\begin{Cor} \label{cor:embedding}
Let $\Gamma$ be a non-elementary hyperbolic group relative to a finite collection of subgroups $\Omega$, and assume that $\mu$ is a finitely supported, admissible, aperiodic and symmetric probability measure on $\Gamma$. Suppose further that the random walk is spectrally non-degenerate. Then, there is a bi-Lipschitz $\Gamma$-equivariant map $\iota : \overline{\partial_{M,R}^m\Gamma} \rightarrow \partial_{\rho} \Gamma$ which is a homeomorphism on its image. Moreover, if we have $C_x=D_x$ for every $x\in \Gamma$ in equations \eqref{defmetricMartin} and \eqref{defmetricratio}, then $\iota$ is isometric.
\end{Cor}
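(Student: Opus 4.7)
The plan is to build $\iota$ in two stages: first on conical pre-images via Proposition~\ref{p:conical-in-rlb}, and then extend by density using Corollary~\ref{c:min-dense-con-dense}. For $\xi \in \partial_{M,R}\Gamma$ with $\pi(\xi)$ conical, choose any $y_n \to \xi$ in $\Delta_{M,R}\Gamma$. Proposition~\ref{p:conical-in-rlb} combined with equation~\eqref{eq:conical_rl-vs-mart} yields $H(x, y_n)/K_R(x, y_n) \to 1$; together with the Martin convergence $K_R(x, y_n) \to K_R(x, \xi)$, this gives $H(x, y_n) \to K_R(x, \xi)$ pointwise in $x$. Since $|H(x,\cdot)| \leq D_x$ and $\phi(y_n) \to \infty$, dominated convergence applied to the series \eqref{defmetricratio} shows that $(y_n)$ is $d_\rho$-Cauchy and converges in $\Delta_\rho\Gamma$ to a point $\iota(\xi) \in \partial_\rho\Gamma$ satisfying $H(x, \iota(\xi)) = K_R(x, \xi)$, which is therefore independent of the chosen sequence.

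By Corollary~\ref{c:min-dense-con-dense}, every $\xi \in \overline{\partial_{M,R}^m\Gamma}$ is a Martin-limit of conical-image points $\xi_n$. Then $H(x, \iota(\xi_n)) = K_R(x, \xi_n) \to K_R(x, \xi)$ by continuity of the Martin kernel, so the same dominated-convergence argument shows $(\iota(\xi_n))$ is $d_\rho$-Cauchy and defines an extension $\iota(\xi) \in \partial_\rho\Gamma$ still satisfying
\begin{equation}\label{eq:iota-kernel-proposal}
H(x, \iota(\xi)) = K_R(x, \xi), \qquad x \in \Gamma.
\end{equation}
Independence of the approximating sequence is automatic since \eqref{eq:iota-kernel-proposal} determines $\iota(\xi)$ as a point of $\partial_\rho\Gamma$.

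The remaining properties follow readily from \eqref{eq:iota-kernel-proposal}. Injectivity holds because the Martin kernels separate points of $\partial_{M,R}\Gamma$; $\Gamma$-equivariance is immediate from the matching cocycle identities $K_R(x,\gamma\xi) = K_R(\gamma^{-1}x,\xi)/K_R(\gamma^{-1},\xi)$ and its analogue for $H$, so both $\iota(\gamma\xi)$ and $\gamma\iota(\xi)$ share the same ratio-limit kernel. For the metric estimate, the indicator terms in \eqref{defmetricMartin} and \eqref{defmetricratio} vanish at boundary points, so
\begin{equation*}
d_\rho(\iota(\xi), \iota(\xi')) = \sum_{x\in\Gamma} \frac{|K_R(x,\xi) - K_R(x,\xi')|}{2^{\phi(x)} D_x}, \qquad d_{M,R}(\xi,\xi') = \sum_{x\in\Gamma} \frac{|K_R(x,\xi) - K_R(x,\xi')|}{2^{\phi(x)} C_x},
\end{equation*}
which coincide when $C_x = D_x$ (giving the isometric statement) and are bi-Lipschitz equivalent whenever the ratios $C_x/D_x$ are uniformly bounded, which can always be arranged. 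Compactness of $\overline{\partial_{M,R}^m\Gamma}$ then upgrades the continuous injection $\iota$ into a Hausdorff space to a homeomorphism onto its image. The only substantive input is Proposition~\ref{p:conical-in-rlb}, which gives the crucial asymptotic $H(x, y) \sim K_R(x, \xi)$ along Martin-convergent sequences to conical-image points; everything else is bookkeeping with the kernel identity \eqref{eq:iota-kernel-proposal}.
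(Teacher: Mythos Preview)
Your proof is correct and follows essentially the same route as the paper's: define $\iota$ on conical pre-images via Proposition~\ref{p:conical-in-rlb} and equation~\eqref{eq:conical_rl-vs-mart}, extend by density using Corollary~\ref{c:min-dense-con-dense}, and read off the bi-Lipschitz/isometric estimate and equivariance directly from the kernel identity $H(x,\iota(\xi))=K_R(x,\xi)$. If anything, your write-up is slightly more explicit than the paper's in two places: you spell out why $(y_n)$ is $d_\rho$-Cauchy via dominated convergence, and you note that the bi-Lipschitz claim requires the ratios $C_x/D_x$ to be uniformly bounded (which the paper tacitly assumes). Your direct verification of equivariance for all $\xi\in\overline{\partial_{M,R}^m\Gamma}$ from the kernel identity is also a minor streamlining of the paper's two-step argument (first on conical pre-images, then by continuity).
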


\begin{proof}
Let $\pi : \partial_{M,R} \Gamma \rightarrow \partial_B(\Gamma;\Omega)$ be the canonical $\Gamma$-factor map (see discussion at the end of Section~\ref{s:rw-rh}). Suppose that $(y_n)$ is a sequence converging to $\xi$ in the $R$-Martin metric $d_{M,R}$, with $\pi(\xi)$ conical. Since $\pi$ extends to a $\Gamma$-factor from $\Delta_{M,R} \rightarrow \Gamma \cup \partial_B(\Gamma;\Omega)$, we get that $(y_n)$ converges to $\pi(\xi)$ in the relative metric as well. By Proposition \ref{p:conical-in-rlb} and equation \eqref{eq:conical_rl-vs-mart} we see that $y_n \rightarrow \iota(\xi)$ according to the metric $d_{\rho}$, for some point $\iota(\xi)$ in $\partial_{\rho} \Gamma$, so that $H(x,\iota(\xi)) = K_{R}(x,\xi)$.
By equations \eqref{defmetricMartin} and \eqref{defmetricratio}, since the metrics $d_{M,R}$ and $d_{\rho}$ are defined using $R$-Martin kernels and ratio-limit kernels respectively, we would get that $\iota$ is bi-Lipschitz map on the set of $\xi$ for which $\pi(\xi)$ is conical. If we take $C_x=D_x$ for every $x\in \Gamma$ in equations \eqref{defmetricMartin} and \eqref{defmetricratio} we actually get that $\iota$ is an isometry. By Corollary \ref{c:min-dense-con-dense} we know that the pre-images under $\pi$ of conical points are dense in $\overline{\partial_{M,R}^m\Gamma}$, so we may extend $\iota$ to a bi-Lipschitz map from $ \overline{\partial_{M,R}^m\Gamma}$ to $ \partial_{\rho} \Gamma$, and if $\iota$ is isometric on conical points, it would extend to an isometry from $ \overline{\partial_{M,R}^m\Gamma}$ to $ \partial_{\rho} \Gamma$.

The above relation $H(x,\iota(\xi)) = K_R(x,\xi)$ together with $K_R(x,g\xi)K_R(g^{-1},\xi)=K_R(g^{-1}x,\xi)$ for the Martin kernel and
$H(x,g\xi)H(g^{-1},\xi)=H(g^{-1}x,\xi)$ for the ratio limit kernel show that $\iota$ is 
$\Gamma$-equivariant on the set of $\xi$ such that $\pi(\xi)$ is conical.
Since $\Gamma$ acts on both boundaries via homeomorphisms, if $\xi_n$ converges to $\xi$ and if $g\in \Gamma$, then $g\xi_n$ converges to $g\xi$.
Now, the set of $\xi$ such that $\pi(\xi)$ is conical is dense in $\overline{\partial_{M,R}^m\Gamma}$.
Letting $\xi_n$ with $\pi(\xi_n)$ conical tend to $\xi\in \overline{\partial_{M,R}^m\Gamma}$, we see that
$$\iota (g\xi)=\lim_n \iota (g\xi_n)=\lim_n g\iota (\xi_n)=g\iota (\xi).$$
Therefore, the extension of $\iota$ to $\overline{\partial_{M,R}^m\Gamma}$ is still $\Gamma$-equivariant.

In any case, since $\iota$ is bi-Lipschitz and $\overline{\partial_{M,R}^m\Gamma}$ is compact, we deduce that $\iota$ is a homeomorphism on its image.
\end{proof}

\begin{Cor} \label{cor:reduced}
Let $\Gamma$ be a non-elementary hyperbolic group relative to a finite collection of subgroups $\Omega$, and assume that $\mu$ is aperiodic, symmetric probability measure with finite support generating $\Gamma$. Suppose further that the random walk is spectrally non-degenerate. Then $\theta |_{\partial_{\rho} \Gamma} : \partial_{\rho} \Gamma \rightarrow \partial_{\rho}^r \Gamma$ is a $\Gamma$-equivariant homeomorphism.
\end{Cor}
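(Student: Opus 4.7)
The plan is to combine the continuous $\Gamma$-equivariant injection of \cite[Lemma~6.3]{Woe21} with a compactness argument to produce the desired homeomorphism. Since the map $y\mapsto(H(x,y))_{x\in\Gamma}$ embeds $\Gamma$ into the compact product $\prod_x[0,D_x]$, the metric $d_\rho$ is totally bounded and $\Delta_\rho\Gamma$ is compact; consequently $\partial_\rho\Gamma$ is compact, while $\Delta_\rho^r\Gamma$ is Hausdorff as a metric completion. By \cite[Lemma~6.3]{Woe21} there is a continuous $\Gamma$-equivariant surjection $\theta:\Delta_\rho\Gamma\to\Delta_\rho^r\Gamma$ with $\theta|_\Gamma(g)=gR_\mu$ and $\theta$ injective on $\partial_\rho\Gamma$. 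Since continuous bijections from compact to Hausdorff spaces are homeomorphisms and $\Gamma$-equivariance of the restriction is inherited, the corollary reduces to verifying the set-theoretic identity $\theta(\partial_\rho\Gamma)=\partial_\rho^r\Gamma$.

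For the inclusion $\theta(\partial_\rho\Gamma)\supseteq\partial_\rho^r\Gamma$, given $\zeta\in\partial_\rho^r\Gamma$, surjectivity of $\theta$ yields some $\eta\in\Delta_\rho\Gamma$ with $\theta(\eta)=\zeta$; since $\theta(\Gamma)=\Gamma/R_\mu$ is disjoint from $\partial_\rho^r\Gamma=\Delta_\rho^r\Gamma\setminus[\Gamma/R_\mu]$, necessarily $\eta\in\partial_\rho\Gamma$. The main obstacle will be the reverse inclusion $\theta(\partial_\rho\Gamma)\subseteq\partial_\rho^r\Gamma$, which by $\Gamma$-equivariance reduces to ruling out $\theta(\eta)=eR_\mu$ for any $\eta\in\partial_\rho\Gamma$; equivalently, to showing that no sequence $h_n\in\Gamma$ leaving every finite subset of $\Gamma$ can satisfy $H(x,h_n)\to H(x,e)$ pointwise in $x\in\Gamma$.

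My proposed route for this hard inclusion is to first establish that $R_\mu$ is finite. Normality of $R_\mu$ is furnished by the proposition preceding Corollary~\ref{cor:reduced}; the identity $H(g,g)=H(g,e)$ valid for $g\in R_\mu$, combined with the symmetry of $\mu$, implies $\lim_n P^n(e,g)/P^n(e,e)=1$, so $R_\mu$ is amenable by \cite[Theorems~4.1~\&~4.2]{ER19}. Since $\Gamma$ is non-elementary and acts with convergence on $\partial_B(\Gamma;\Omega)$, any amenable normal subgroup is contained in the finite kernel of this action, forcing $R_\mu$ finite. With $R_\mu$ finite, the projection $\Gamma\to\Gamma/R_\mu$ is proper, so any sequence $h_n\in\Gamma$ escaping finite subsets of $\Gamma$ projects to a sequence $h_nR_\mu$ escaping finite subsets of $\Gamma/R_\mu$. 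To conclude, I would combine this with the essential minimality of $\partial_\rho\Gamma$ from Theorem~\ref{t:essent-min}: the $H$-kernel of any $\eta\in\partial_\rho\Gamma$ is a Poisson integral over the compact minimal set $\overline{\partial^m_{M,R}\Gamma}$ with respect to a probability measure $\nu^\eta$ giving no mass to $\Gamma$, and an identity $H(\cdot,\eta)=H(\cdot,e)$ would, after representing $H(\cdot,e)$ via its own unique measure on $\overline{\partial^m_{M,R}\Gamma}$, contradict the bi-Lipschitz embedding $\iota$ of Corollary~\ref{cor:embedding} together with injectivity of $\theta$ on $\partial_\rho\Gamma$. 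This obstruction yields the inclusion and hence the corollary.
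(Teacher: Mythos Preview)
Your overall architecture is the same as the paper's: reduce to the set equality $\theta(\partial_\rho\Gamma)=\partial_\rho^r\Gamma$, establish that $R_\mu$ is finite, and use properness of $\Gamma\to\Gamma/R_\mu$ to finish. Your route to the finiteness of $R_\mu$ is genuinely different and in some ways cleaner: you use amenability of $R_\mu$ (via \cite{ER19}) together with the general fact that amenable normal subgroups of a non-elementary convergence group lie in the finite kernel of the boundary action. The paper instead argues that $R_\mu$ acts trivially on $\Gamma/R_\mu$, hence on $\Delta_\rho^r\Gamma$, hence (by injectivity of $\theta|_{\partial_\rho\Gamma}$) on $\partial_\rho\Gamma$, hence (via the embedding $\iota$ of Corollary~\ref{cor:embedding}) on $\overline{\partial_{M,R}^m\Gamma}$, and finally on $\partial_B(\Gamma;\Omega)$; only then does it invoke proper discontinuity on triples. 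Your shortcut avoids Corollary~\ref{cor:embedding} entirely at this stage, and in particular does not use spectral non-degeneracy to obtain finiteness of $R_\mu$.

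However, your final paragraph contains a gap. Once you have shown $R_\mu$ finite and observed that $h_nR_\mu$ escapes every finite subset of $\Gamma/R_\mu$, you are essentially done: by continuity $\theta(g_n)=g_nR_\mu\to\theta(\eta)$, and since $g_nR_\mu$ is eventually outside any prescribed finite $F\subset\Gamma/R_\mu$, the limit $\theta(\eta)$ cannot lie in $\Gamma/R_\mu$. This is exactly how the paper concludes. Your detour through Theorem~\ref{t:essent-min}, Poisson integrals, and ``injectivity of $\theta$ on $\partial_\rho\Gamma$'' does not supply the missing step: injectivity on $\partial_\rho\Gamma$ says nothing about whether $\theta(\eta)$ can equal $\theta(e)=eR_\mu$, since $e\notin\partial_\rho\Gamma$, and nothing in the Poisson representation of $H(\cdot,e)$ versus $H(\cdot,\eta)$ yields a contradiction with the embedding $\iota$ as stated. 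Drop that paragraph and conclude directly from properness, as the paper does.
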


\begin{proof}
Since $R_{\mu}$ is normal, it acts trivially on $\Gamma/R_{\mu}$, and therefore also on $\Delta_{\rho}^r \Gamma$. By \cite[Lemma 6.2(ii)]{Woe21} we know that $\theta|_{\partial_{\rho} \Gamma} : \partial_{\rho} \Gamma \rightarrow \Delta_{\rho}^r \Gamma$ is injective, so that $R_{\mu}$ acts trivially also on $\partial_{\rho} \Gamma$. By Corollary \ref{cor:embedding} we get that $R_{\mu}$ acts trivially on $\overline{\partial^m_{M,R} \Gamma}$. Since the pre-image of every point in $\partial_B(\Gamma;\Omega)$ contains a minimal point (see end of Section \ref{s:rw-rh})
we see that the restriction of $\pi$ to $\overline{\partial^m_{M,R} \Gamma}$ is still onto $\partial_B(\Gamma;\Omega)$, and we get that $R_{\mu}$ acts trivially on $\partial_B(\Gamma;\Omega)$. Now, since the action of $\Gamma$ on the space of distinct triples is properly discontinuous, it follows that the kernel of $\Gamma \curvearrowright \partial_B(\Gamma;\Omega)$ is finite, so that $R_{\mu}$ is also finite.

Now let $\xi \in \partial_{\rho} \Gamma$ be a boundary element, and $(g_n)$ a sequence in $\Gamma$ such that $g_n \rightarrow \xi$ in $\Delta_{\rho} \Gamma$. As $R_{\mu}$ is finite, for any finite subset $F\subset \Gamma/R_{\mu}$ we must have that $g_nR_{\mu}$ is eventually not in $F$. Hence, we must have that $g_n R_{\mu} = \theta(g_n) \rightarrow \theta(\xi)$ is not in $F$, so that $\theta(\xi) \in \partial_{\rho}^r \Gamma$. Since $\theta$ is surjective, $\theta |_{\partial_{\rho} \Gamma}$ must be onto $\partial_{\rho}^r \Gamma$, and is therefore a homeomorphism.
\end{proof}

Combining Corollary \ref{cor:embedding} with results from Section \ref{s:min-strong-prox}, we obtain one of the main results of the paper, showing that $\partial_{\rho} \Gamma$ is essentially minimal. That is, $\overline{\partial^m_{M,R}\Gamma}$ is the unique smallest closed $\Gamma$-invariant subspace of $\partial_{\rho} \Gamma$.

\begin{Thm} \label{t:essent-min}
Let $\Gamma$ be a non-elementary hyperbolic group relative to a finite collection of subgroups $\Omega$, and assume that $\mu$ is aperiodic, symmetric probability measure with finite support generating $\Gamma$. Suppose further that the random walk is spectrally non-degenerate. Then $\Gamma \curvearrowright \partial_{\rho} \Gamma$ is essentially minimal, and its unique smallest closed $\Gamma$-invariant subspace is $\Gamma \curvearrowright \overline{\partial^m_{M,R}\Gamma}$.
\end{Thm}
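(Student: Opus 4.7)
The first assertion — the existence of the bi-Lipschitz $\Gamma$-equivariant embedding $i$ — is exactly the content of Corollary \ref{cor:embedding}. Its image $i(\overline{\partial^m_{M,R}\Gamma})$ is consequently a closed $\Gamma$-invariant subspace of $\partial_\rho\Gamma$, and by Proposition \ref{p:min-strong-prox} the restricted action $\Gamma\curvearrowright i(\overline{\partial^m_{M,R}\Gamma})$ is minimal and strongly proximal. To prove essential minimality, it therefore suffices to show that every non-empty closed $\Gamma$-invariant $Y\subseteq\partial_\rho\Gamma$ contains $i(\overline{\partial^m_{M,R}\Gamma})$. By minimality of the latter and $\Gamma$-invariance of $\overline{\Gamma\eta}$, this reduces to exhibiting, for every $\eta\in Y$, a single point of $i(\overline{\partial^m_{M,R}\Gamma})$ in the orbit closure $\overline{\Gamma\eta}$.

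Fix $\eta\in\partial_\rho\Gamma$ and let $\nu^\eta\in\Prob(\overline{\partial^m_{M,R}\Gamma})$ be its Poisson-Martin representing measure (with $\nu^\eta(\partial^m_{M,R}\Gamma)=1$). Applying the strong proximality conclusion of Proposition \ref{p:min-strong-prox} to $\nu^\eta$, we obtain a sequence $(g_n)\subseteq\Gamma$ and a point $\xi_0\in\overline{\partial^m_{M,R}\Gamma}$ with $(g_n)_\ast\nu^\eta\to\delta_{\xi_0}$ weakly in $\Prob(\overline{\partial^m_{M,R}\Gamma})$. The plan is to show that $g_n\eta\to i(\xi_0)$ in $\partial_\rho\Gamma$, i.e.\ $H(x,g_n\eta)\to K_R(x,\xi_0)$ for every $x\in\Gamma$; this immediately yields $i(\xi_0)\in\overline{\Gamma\eta}$ as required. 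Using the Martin cocycle $K_R(g^{-1}x,\zeta)=K_R(x,g\zeta)K_R(g^{-1},\zeta)$ together with the normalization identity $K_R(g,g\zeta)K_R(g^{-1},\zeta)=1$, and changing variables $\zeta'=g_n\zeta$ in the Poisson-Martin integral, one obtains
\begin{equation*}
H(x,g_n\eta)=\frac{\displaystyle\int K_R(x,\zeta')\,\frac{K_R(g_n,\xi_0)}{K_R(g_n,\zeta')}\,d(g_n)_\ast\nu^\eta(\zeta')}{\displaystyle\int \frac{K_R(g_n,\xi_0)}{K_R(g_n,\zeta')}\,d(g_n)_\ast\nu^\eta(\zeta')}.
\end{equation*}
Thus the task is to control the Poisson weight $w_n(\zeta'):=K_R(g_n,\xi_0)/K_R(g_n,\zeta')$ uniformly in $n$.

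The principal technical obstacle is establishing that $w_n$ tends uniformly to $1$ on a Martin-topology neighborhood of $\xi_0$. To this end, we use the strong Ancona inequalities (Proposition \ref{strongrelativeAncona}): for $\zeta'$ close to $\xi_0$ in $\overline{\partial^m_{M,R}\Gamma}$, the relative geodesics $[g_n,\zeta']$ and $[g_n,\xi_0]$ share a long initial segment, which — combined with compatibility between the Martin and Bowditch topologies near conical pre-images, available through spectral non-degeneracy via Proposition \ref{p:conical-in-rlb} and density of pre-images of conical points from Corollary \ref{c:min-dense-con-dense} — gives a bound $|w_n(\zeta')-1|\leq C\alpha^{k_n(\zeta')}$ with fellow-travelling time $k_n(\zeta')\to\infty$ uniformly as $\zeta'$ approaches $\xi_0$. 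Contributions from $\zeta'$ outside this neighborhood are negligible: the mass $(g_n)_\ast\nu^\eta$ vanishes there, while the weight $w_n$ is controlled by weak Ancona inequalities (Proposition \ref{weakrelativeAncona}) and Harnack-type bounds. A dominated-convergence argument combined with continuity of $K_R(x,\cdot)$ on $\overline{\partial^m_{M,R}\Gamma}$ and the weak$^*$-convergence of $(g_n)_\ast\nu^\eta$ then gives $H(x,g_n\eta)\to K_R(x,\xi_0)$, completing the proof. The most delicate step is obtaining the uniform-in-$n$ estimate on $w_n$ near $\xi_0$, where one must verify that pushing $\zeta'$ close to $\xi_0$ in Martin distance guarantees long fellow-travelling with $[g_n,\xi_0]$; this is precisely the place where the bi-Lipschitz structure of $i$ from Corollary \ref{cor:embedding} and the geometric content of strong Ancona inequalities intertwine.
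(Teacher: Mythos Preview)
Your overall strategy coincides with the paper's up to the moment you invoke strong proximality to obtain $(g_n)$ with $(g_n)_*\nu^\eta\to\delta_{\xi_0}$. The divergence is in how one passes from this to $H(\cdot,g_n\eta)\to K_R(\cdot,\xi_0)$. The paper does \emph{not} attempt any geometric control of the weight $w_n$; instead it computes the Radon--Nikodym derivative
\[
\frac{d(g_\alpha)_*\nu^{\eta}}{d\nu^{g_\alpha\eta}}(\zeta)=\frac{K_R(g_\alpha^{-1},g_\alpha^{-1}\zeta)}{H(g_\alpha^{-1},\eta)},
\]
observes that both sides are probability measures, and deduces that $\nu^{g_\alpha\eta}\to\delta_\xi$ as well. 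Continuity of the Poisson map $\varphi:\Prob(\partial_{M,R}\Gamma)\to\mathcal H^+(P,R^{-1})$ (discussed in Section~\ref{s:rw-rh}) then yields $H(\cdot,g_\alpha\eta)=\varphi(\nu^{g_\alpha\eta})\to\varphi(\delta_\xi)=K_R(\cdot,\xi)$ directly, with no further Ancona analysis. After passing to a convergent subnet $g_\alpha\eta\to\eta'$ one gets $\eta'\in\iota(\overline{\partial^m_{M,R}\Gamma})$, and minimality finishes the argument.

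Your geometric route has a genuine gap at precisely the step you flag as ``most delicate''. The assertion that for $\zeta'$ close to $\xi_0$ in the Martin metric the relative geodesics $[g_n,\zeta']$ and $[g_n,\xi_0]$ fellow-travel for a time tending to infinity \emph{uniformly in $n$} is unjustified and in fact can fail. Strong proximality gives no control on where $g_n$ sits: when $g_n=s^n$ for a hyperbolic element $s$ with attracting fixed point $\pi(\xi_0)$ --- exactly the sequence produced in the proof of Proposition~\ref{p:min-strong-prox} --- the basepoint $g_n$ converges to $\pi(\xi_0)$ in the Bowditch boundary, and for large $n$ lies past the point where $[e,\xi_0]$ and $[e,\zeta']$ diverge; then $[g_n,\xi_0]$ continues forward along the axis while $[g_n,\zeta']$ first backtracks, so they share no initial segment at all, however close $\zeta'$ is to $\xi_0$. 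Likewise the claim that weak Ancona and Harnack bound $w_n$ on the complement of a neighbourhood of $\xi_0$ uniformly in $n$ is not substantiated: Harnack constants for $K_R(g_n,\cdot)$ depend on $|g_n|$. Without these uniform bounds the dominated-convergence step does not go through, and the proposal as written does not establish $H(x,g_n\eta)\to K_R(x,\xi_0)$.
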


\begin{proof}
Let $\iota : \overline{\partial_{M,R}^m\Gamma} \rightarrow \partial_{\rho} \Gamma$ be the map constructed in Corollary~\ref{cor:embedding}, and  $\eta \in \partial_{\rho} \Gamma \setminus \iota(\overline{\partial_{M,R}^m\Gamma})$. By the discussion on $r$-harmonic functions in Section~\ref{s:rw-rh} we know that $\varphi : \Prob(\partial_{M,R}\Gamma) \rightarrow \mathcal{H}^+(P,R^{-1})$ given by $\varphi(\nu) = u_{\nu}$ is a surjective continuous map. 

Let us denote by $\nu^{\eta}$ the unique representing measure for $x\mapsto H(x,\eta)$ satisfying $\nu^{\eta}(\partial_{M,R}^m \Gamma) = 1$. For every $g\in \Gamma$ we have that,
\begin{align*}
H(g^{-1}, \eta) H(x,g \eta) &= H(g^{-1}x,\eta) = \int_{\partial_{M,R} \Gamma} K_{R}(g^{-1}x,\xi) d\nu^{\eta}(\xi)\\
&=\int_{\partial_{M,R} \Gamma} K_{R}(x,g\xi) K_{R}(g^{-1},\xi)d\nu^{\eta}(\xi) \\
&= \int_{\partial_{M,R} \Gamma} K_{R}(x,\xi) K_{R}(g^{-1},g^{-1}\xi)d\nu^{\eta}(g^{-1}\xi).
\end{align*}

Hence, by uniqueness of representing measures with full measures on $\partial_{M,R}^m \Gamma$, and as $\partial_{M,R}^m \Gamma$ is a $\Gamma$-invariant set, we get the Radon-Nikodym derivative
$$
\frac{d g_*\nu^{\eta}}{d\nu^{g\eta}}(\xi) = \frac{K_{R}(g^{-1},g^{-1} \xi)}{H(g^{-1},\eta)}.
$$
By Proposition \ref{p:min-strong-prox} we know that $\Gamma\curvearrowright \overline{\partial_{M,R}^m \Gamma}$ is minimal and strongly proximal, and contains all minimal points. Since the support of $\nu^{\eta}$ is contained in $\overline{\partial_{M,R}^m \Gamma}$, by strong proximality of $\Gamma\curvearrowright \overline{\partial_{M,R}^m \Gamma}$ there exists a net $(g_{\alpha})$ in $\Gamma$ so that $(g_{\alpha})_* \nu^{\eta}$ converges to some dirac measure $\delta_{\xi}$ for a minimal point $\xi \in \partial_{M,R}^m \Gamma$. Thus, since $\nu^{g\eta}$ are all probability measures, it also follows that $\nu^{g_{\alpha}\eta}$ converges to $\delta_{\xi}$. Since $\varphi : \Prob(\partial_{M,R}\Gamma) \rightarrow \mathcal{H}^+(P,R^{-1})$ is continuous, we get for all $x\in \Gamma$ that $H(x,g_{\alpha}\eta) \rightarrow K_{R}(x,\xi)$. But now, if without loss of generality (up to taking a subnet) $g_{\alpha}\eta$ converges to some cluster point $\eta' \in \partial_{\rho}\Gamma$, then $H(x,\eta') = K_{R}(x,\xi)$ for all $x\in \Gamma$, so that $\iota(\eta') = \xi$ is a minimal point. Since $\Gamma \curvearrowright \iota(\overline{\partial_{M,R}^m \Gamma})$ is a minimal action, we see that $\overline{\partial_{M,R}^m \Gamma}$ must be the unique smallest closed $\Gamma$-invariant subspace of $\partial_{\rho} \Gamma$.
\end{proof}

%%%%%%%%%%%%%%%%%%%%%%%%%%%%%%%%%%%%%%%%%%%%%%%%%%%%%%%%%%%%%

\section{Distinct ratio-limit boundaries on $\mathbb{Z}^5 * \mathbb{Z}$}

In this section, we provide an example of a group endowed with two probability measures such that the resulting ratio limit boundaries are not equivariantly homeomorphic. More specifically, we take $\Gamma= \Gamma_1 * \Gamma_2$ where $\Gamma_1 = \mathbb{Z}^5$ and $\Gamma_2 = \mathbb{Z}$, and $\mu = \alpha_1 \mu_1 + \alpha_2 \mu_2$ will be an finitely supported, admissible, adapted and symmetric probability measures. By Example \ref{ex:limitprobmeasures} we may fix $\alpha_1$ large enough so that the random walk is convergent. We introduce the function $\Phi_r^{(2)}$ defined by
\begin{equation}\label{defphi2}
\Phi_r^{(2)}(x,y)=\frac{I^{(2)}(x,y|r)}{G(x,y|r)}=\frac{1}{G(x,y|r)}\sum_{z_1,z_2\in \Gamma}G(x,z_1|r)G(z_1,z_2|r)G(z_2,y|r).
\end{equation}
By Corollary~\ref{c:ratio-limit-iterated-sums}~(2) we have that,
$$\frac{\Phi_R^{(2)}(x,y)}{\Phi_R^{(2)}(e,y)}:= \lim_{r\to R}\frac{\Phi_r^{(2)}(x,y)}{\Phi_r^{(2)}(e,y)} = \frac{H(x,y)}{K(x,y|R)}.$$

Our goal is to prove the following analogous of Proposition~\ref{p:conical-in-rlb} in this special case. The proof follows the same line as the proof of Proposition~\ref{p:conical-in-rlb}. The main difference is that we need to deal with $\Phi_r^{(2)}$ instead of $\Phi_r$.
However, in our specific context, most geometric arguments are replaced with combinatorial ones that use the free product structure. We will use the more convenient terminology of free products rather than the terminology of relatively hyperbolic groups, which is better suited for our proof.

\begin{Prop}\label{p:conicalconvergentcase}
Let $\Gamma = \Gamma_1 * \Gamma_2$ where $\Gamma_1 = \mathbb{Z}^5$ and $\Gamma_2 = \mathbb{Z}$. Let $\mu_1,\mu_2$ be finitely supported, admissible and symmetric probability measures on $\Gamma_1,\Gamma_2$ respectively, such that the adapted probability measure $\mu = \alpha_1 \mu_1 + \alpha_2 \mu_2$ is convergent and spectrally degenerate along $\Gamma_1$. Let $\xi \in \partial_{M,R}\Gamma$ be a point such that $\pi(\xi)$ is conical in $\partial_B(\Gamma;\Omega)$. Then for every $x\in \Gamma$, as $y\rightarrow \xi$ in $\partial_{M,R}\Gamma$ we have
$$
\frac{H(x,y)}{K_R(x,y)}=\frac{\Phi_R^{(2)}(x,y)}{\Phi_R^{(2)}(e,y)} \underset{y \rightarrow \xi}{\longrightarrow} 1.
$$
\end{Prop}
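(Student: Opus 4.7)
The plan is to imitate the proof of Proposition~\ref{p:conical-in-rlb}, replacing $\Phi_r$ by $\Phi_r^{(2)}$. Under the hypotheses, $\Gamma_1=\mathbb{Z}^5$ has homogeneous dimension $5$, so the spectral degeneracy rank equals $d=5$ and $s=\lceil d/2\rceil-1=2$ in Corollary~\ref{c:ratio-limit-iterated-sums}(2). The weak and strong relative Ancona inequalities of Propositions~\ref{weakrelativeAncona} and~\ref{strongrelativeAncona} remain available and remain the essential analytic inputs; the additional work is combinatorial, due to the double sum defining $I^{(2)}$.

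\textbf{Step 1: pivot on $z_1$.} Following the identity underlying Proposition~\ref{p:conical-in-rlb}, we write
\begin{align*}
|\Phi_r^{(2)}(x,y)-\Phi_r^{(2)}(e,y)|=\sum_{z_1,z_2\in\Gamma}\frac{G(e,z_1|r)G(z_1,z_2|r)G(z_2,y|r)}{G(e,y|r)}\left|\frac{G(x,z_1|r)G(e,y|r)}{G(e,z_1|r)G(x,y|r)}-1\right|.
\end{align*}
Crucially only $z_1$ enters the Ancona ratio, since the $G(z_1,z_2|r)G(z_2,y|r)$ factors cancel in the difference. Choosing a relative geodesic $[e,y]=(e,y_1,\ldots,y_n)$ via an automaton as in \cite{Dus22b} and partitioning $\Gamma=\bigsqcup_k\Gamma_k$ according to the projection of $z_1$ onto $[e,y]$, the projection argument from Proposition~\ref{p:conical-in-rlb} and Proposition~\ref{strongrelativeAncona} yield a constant $C(x)$ depending only on $|x|$ and $0<\alpha<1$ such that the Ancona ratio is $\leq K\alpha^{k-C(x)}$ for $z_1\in\Gamma_k$. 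Setting
\begin{align*}
A_k(y|r):=\sum_{z_1\in\Gamma_k,\,z_2\in\Gamma}\frac{G(e,z_1|r)G(z_1,z_2|r)G(z_2,y|r)}{G(e,y|r)},
\end{align*}
we obtain $|\Phi_r^{(2)}(x,y)-\Phi_r^{(2)}(e,y)|\lesssim\sum_{k=0}^n\alpha^k A_k(y|r)$, while tautologically $\Phi_r^{(2)}(e,y)=\sum_{k=0}^n A_k(y|r)$.

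\textbf{Step 2: factoring $A_k$ (main obstacle).} The heart of the argument is to extend \cite[Proposition~6.3]{Dus22b} from $I^{(1)}$ to $I^{(2)}$, producing a decomposition of the shape
\begin{align*}
A_k(y|r)\asymp I^{(2)}(e,e|r)\cdot\Upsilon_r(T^k[e,y]),
\end{align*}
uniformly in $r\leq R$ and $k\leq n$, where $\Upsilon_r$ and $T$ are as in the proof of Proposition~\ref{p:conical-in-rlb}. The strategy is two-layered: first, split $\Gamma_k$ according to the projection of $z_1$ onto the parabolic coset $y_{k-1}H_k$ and apply weak relative Ancona along $e\to y_{k-1}\to y_{k-1}w$ exactly as in \cite[Subsection~6.1]{Dus22b} to extract the factor $\Upsilon_r(T^k[e,y])$. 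Second, for $z_1$ close to $y_{k-1}w$, use the fact that by \cite[Lemma~4.16]{Dus22a} a relative geodesic from $z_1$ to $y$ coincides up to bounded error with the tail $(y_{k-1},y_k,\ldots,y_n)$, so that the remaining double sum over the free coordinates $(z_1,z_2)$ can be telescoped by iterated weak Ancona into a residual sum uniformly comparable with $I^{(2)}(e,e|r)$. The obstacle here is the free motion of $z_2$: unlike in Proposition~\ref{p:conical-in-rlb}, the inner sum over $z_2$ produces an additional Green-theoretic convolution, and one must show this is not dominated by $I^{(1)}(e,e|r)^2$ (which would stay bounded by convergence) but by $I^{(2)}(e,e|r)$ (which diverges as $r\to R$). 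This amounts to checking that the residual two-variable sum $\sum_{z_1',z_2'}G(y_{k-1}w,z_1'|r)G(z_1',z_2'|r)G(z_2',y_{k-1}w|r)$ behaves like $I^{(2)}(e,e|r)$, which should follow by $\Gamma$-invariance and a careful Ancona bookkeeping.

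\textbf{Step 3: conclusion.} Given the factorization, $\Phi_r^{(2)}(e,y)\asymp I^{(2)}(e,e|r)\sum_{k=0}^n\Upsilon_r(T^k[e,y])$, the $I^{(2)}(e,e|r)$ factor cancels, and we arrive at
\begin{align*}
\left|\frac{\Phi_r^{(2)}(x,y)}{\Phi_r^{(2)}(e,y)}-1\right|\lesssim\frac{\sum_{k=0}^{k_0}\Upsilon_r(T^k[e,y])}{\sum_{k=0}^{n}\Upsilon_r(T^k[e,y])}+\frac{\alpha^{k_0}}{1-\alpha}.
\end{align*}
Fixing $\epsilon>0$, pick $k_0$ so the second term is at most $\epsilon$. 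Since $\pi(\xi)$ is conical, $y\to\xi$ in $\partial_{M,R}\Gamma$ forces $y\to\pi(\xi)$ in the relative topology, so for $y$ close to $\xi$ the initial segment of the geodesic $[e,y]$ agrees with that of $[e,\pi(\xi)]$, making the numerator of the first term bounded by a constant depending only on $\xi$; meanwhile each $\Upsilon_r(T^k[e,y])\geq 1$ forces the denominator to grow at least linearly in $n=|y|_{\mathrm{rel}}\to\infty$. All bounds are uniform in $r\leq R$, so the inequality passes to the limit $r\to R$, and via the identity $\Phi_R^{(2)}(x,y)/\Phi_R^{(2)}(e,y)=H(x,y)/K_R(x,y)$ recalled before Proposition~\ref{p:conicalconvergentcase}, the proposition follows.
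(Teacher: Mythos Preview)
Your Step~2 factorization $A_k(y|r)\asymp I^{(2)}(e,e|r)\cdot\Upsilon_r(T^k[e,y])$ is false, and this is where the argument breaks down. Write $y=s_1\cdots s_n$ in normal form. When the $k$-th letter $s_k$ lies in $\Gamma_2=\mathbb{Z}$ (the spectrally \emph{non}-degenerate factor), $A_k(y|r)$ remains bounded as $r\to R$, whereas $I^{(2)}(e,e|r)\to\infty$ and $\Upsilon_r(T^k[e,y])\gtrsim 1$. To see this, partition the inner sum over $z_2$ according to its own branching index $j$ on $[e,y]$: for $j\neq k$ the free product path relations and convergence of $I^{(1)}(e,e|R)$ make each contribution bounded in $r$, and for $j=k$ the contribution is controlled by $I^{(2)}_{\Gamma_2}(e,s_k|\zeta_2(r))$, which is finite at $r=R$ since $\mu$ is spectrally non-degenerate along $\mathbb{Z}$. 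Thus no global factor $I^{(2)}(e,e|r)$ can be extracted uniformly in $k$; the divergence is concentrated entirely in the $\mathbb{Z}^5$ letters. Your residual-sum heuristic fails because the endpoint of the $z_2\to y$ segment is $y$, not $y_{k-1}w$, so the inner sum over $z_2$ sees the full tail of $[e,y]$ rather than localizing near position $k$.

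The paper does not try to make the second-order case mirror Proposition~\ref{p:conical-in-rlb}. Instead it partitions \emph{both} $z_1$ and $z_2$ by their branching positions $(k,j)$ on the normal form of $y$, shows the off-diagonal part $j\neq k$ is $O_y(1)$ as $r\to R$, and identifies the diagonal part as a twisted Birkhoff sum
\[
\tilde\Phi_r^{(2)}(e,y)=\sum_{k=1}^n \alpha_k(r)\,\frac{I^{(2)}_{i_k}(e,s_k\,|\,\zeta_{i_k}(r))}{G_{i_k}(e,s_k\,|\,\zeta_{i_k}(r))}
\]
built from the \emph{parabolic} second iterated sums. The $\mathbb{Z}^5$ local limit theorem gives $I^{(2)}_1(e,g|\zeta_1(r))\asymp(1-\zeta_1(r))^{-1/2}$, so the terms with $s_k\in\mathbb{Z}^5$ diverge and those with $s_k\in\mathbb{Z}$ stay bounded. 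The key cancellation is then purely combinatorial: writing $x^{-1}y=t_1\cdots t_m$, the terms of $\tilde\Phi_r^{(2)}(x,y)$ and $\tilde\Phi_r^{(2)}(e,y)$ agree beyond the common prefix of $x$ and $y$, leaving only a finite initial block (depending on $x,\xi$) in the numerator, while the denominator contains an unbounded number of $\mathbb{Z}^5$ letters as $y\to\xi$. This is where the specific $\mathbb{Z}^5*\mathbb{Z}$ structure and the $(1-\zeta_1(r))^{-1/2}$ asymptotics do genuine work that your uniform Ancona approach cannot replace.
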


\begin{proof}
Before we begin our proof, we describe our overall strategy. Our first goal is to prove that
$$\Phi_r^{(2)}(e,y)-\Phi_r^{(2)}(x,y)\sim \Tilde{\Phi}_r^{(2)}(e,y)-\Tilde{\Phi}_r^{(2)}(x,y),$$ where $\Tilde{\Phi}_r^{(2)}$ is a twisted Birkhoff sum that replaces the expression $\sum \Upsilon_r(T^k[e,y])$ in the proof of Proposition~\ref{p:conical-in-rlb}.
This twisted Birkhoff sum involves the second derivatives of the induced Green functions on free factors.
Unlike $\sum \Upsilon_r(T^k[e,y])$, it tends to infinity as $r$ goes to $R$, even for fixed $y$ and we need to understand its asymptotic behavior in $r$.
To do so, we use classical results about the expansion of the Green function at its spectral radius in $\mathbb Z^d$ and show that $\Tilde{\Phi}_r^{(2)}(e,y)$ behaves like  $(1-\zeta_1(r))^{-1/2}$.
Finally, as $y$ tends to a conical limit point, we prove that for fixed $x$, the terms in $\Tilde{\Phi}_r^{(2)}(e,y)-\Tilde{\Phi}_r^{(2)}(x,y)$ simplify each other so $(1-\zeta_1(r))^{-1/2}\big(\Tilde{\Phi}_r^{(2)}(e,y)-\Tilde{\Phi}_r^{(2)}(x,y)\big)$ remains bounded, while $(1-\zeta_1(r))^{-1/2}\Tilde{\Phi}_r^{(2)}(e,y)$ goes to infinity.
We conclude that $\lim_{r\to R}\frac{\Tilde{\Phi}_r^{(2)}(e,y)-\Tilde{\Phi}_r^{(2)}(x,y)}{ \Tilde{\Phi}_r^{(2)}(e,y)}$ converges to 0 as $y$ converges to a conical limit point, so that $H(x,y)/K_R(x,y)$ converges to 1.

We first compute $\Phi^{(2)}_r(e,y)$ as defined in~(\ref{defphi2}).
We write $y$ in its normal form $y=s_1...s_n$, where $s_i$ and $s_{i+1}$ for $i=1,...,n-1$ belong to different parabolic subgroups (either $\mathbb{Z}^5$ or $\mathbb{Z}$), and we write $y_j=s_1...s_j$. 

\begin{figure}
\begin{center}
\begin{tikzpicture}[scale=2.2]
\draw (-1.8,.5)--(-2,-.4);
\draw (2.2,.5)--(2,-.4);
\draw (-2,-.4)--(2,-.4);
\draw (-.2,-1.3)--(-.2,-.4);
\draw[dotted] (-.2,-.4)--(-.2,0);
\draw (-.2,0)--(.4,-.2);

\draw (-.2,0)--(.2,.4);

\draw[dotted] (.4,-.2)--(.4,-.4);
\draw (.4,-.4)--(.4,-1);
\draw (.4,-.2)--(.2,.4);
\draw (.2,.4)--(.2,1);
\draw[dotted] (.2,1)--(.2,1.4);
\draw (-2.3,1.9)--(-2.5,1);
\draw (.7,1.9)--(.5,1);
\draw (-2.5,1)--(.5,1);
\draw (.2,1.4)--(-1.6,1.7);
\draw (-1.6,1.7)--(-.5,1.3);
\draw (-.5,1.3)--(.2,1.4);
\draw (-1.6,1.7)--(-1.6,2.5);
\draw (-.5,1.3)--(-.5,2.3);
\draw (-.5,-1.3) node{$e$};
\draw (-.4,.05) node{$y_{j-1}$};
\draw (.1,.45) node{$y_{j}$};
\draw (.7,-.15) node{$y_{j-1}w_1$};
\draw (.6,-1.1) node{$z_1$};
\draw (-1.7,2.5) node{$z_2$};
\draw (-1.95, 1.7) node{$y_{k-1}w_2$};
\draw (.4,1.4) node{$y_{k-1}$};
\draw (-.45,1.2) node{$y_k$};
\draw (-.4,2.3) node{$y$};
\end{tikzpicture}
\end{center}
\caption{} \label{fig:3}
\end{figure}

Fix two points $z_1,z_2 \in \Gamma$, and let $y_{j-1}$ be the largest common prefix of $y$ and $z_1$. Similarly, let $y_{k-1}$ be the largest common prefix of $y$ and $z_2$.
We also write $w_1$ and $w_2$ for the next letter (either after $y_{j-1}$ or $y_{k-1}$) in the normal form of $z_1$ and $z_2$.
In other words, $z_1$ and $z_2$ can be written as
$$z_1=y_{j-1}w_1z'_1$$
$$z_2=y_{k-1}w_2z'_2,$$
with $w_i\in \mathbb Z^5$ or $w_i\in \mathbb Z$ and no prefix of $z_1',z_2'$ in the same subgroup that $w_1$ and $w_2$ belong to.

Any path from $e$ to $z_1$ necessarily has to pass through $y_{j-1}$ and $y_{j-1}w_1$.
By \cite[Equation (3.3)]{Woe86} we may decomposing such a path to get
$$\frac{G(e,z_1|r)}{G(e,e|r)}=\frac{G(e,y_{j-1}|r)}{G(e,e|r)}\frac{G(y_{j-1},y_{j-1}w_1|r)}{G(e,e|r)}\frac{G(y_{j-1}w_1,z_1|r)}{G(e,e|r)}.$$
We rewrite this as

\begin{equation} \label{eq:etoz1}
   G(e,e|r)^2G(e,z_1|r)=G(e,y_{j-1}|r)G(e,w_1|r)G(e,z'_1|r). 
\end{equation}

Similarly, any path from $z_2$ to $y$ has to pass through $y_{k-1}w_2$ and $y_k$, hence
$$\frac{G(z_2,y|r)}{G(e,e|r)}=\frac{G(z_2,y_{k-1}w_2|r)}{G(e,e|r)}\frac{G(y_{k-1}w_2,y_{k}|r)}{G(e,e|r)}\frac{G(y_{k}w_1,y|r)}{G(e,e|r)},$$
We rewrite this as
\begin{equation} \label{eq:z2toy}
G(e,e|r)^2G(z_2,y|r)=G(z_2',e|r)G(w_2,s_k|r)G(y_k,y|r).
\end{equation}

We have two cases to consider, either $j=k$ or $j\neq k$.
Our goal is ultimately to prove that the dominant term in $\Phi^{(2)}(e,y)$ is the sub-sum over $z_1$ and $z_2$ such that $j=k$.
We will prove that the sub-sum over $z_1$ and $z_2$ such that $j\neq k$ converges to a finite limit as $r$ tends to $R$, while the sub-sum over $z_1$ and $z_2$ such that $j=k$ diverges as $r$ tends to $R$. 

We first assume that $j< k$ and consider Figure \ref{fig:3}. Then, using \cite[Equation (3.3)]{Woe86} again, we similarly get that for any path from $z_1$ to $z_2$ has to pass through $y_{j-1}w_1$, $y_{j}$, $y_{k-1}$ and $y_{k-1}w_2$ so we have

\begin{equation} \label{eq:z1z2}
G(e,e|r)^4G(z_1,z_2|r)=G(z_1',e|r)G(w_1,s_j|r)G(y_{j},y_{k-1}|r)G(e,w_2|r)G(e,z_2'|r).
\end{equation}
In the same manner, any path from $e$ to $y$ has to pass through $y_{j-1}$, $y_j$, $y_{k-1}$ and $y_k$, so
\begin{equation} \label{eq:etoy}
G(e,e|r)^4G(e,y|r)=G(e,y_{j-1}|r)G(e,s_j|r)G(y_{j},y_{k-1}|r)G(e,s_k|r)G(y_k,y|r).
\end{equation}
By multiplying \eqref{eq:etoz1}, \eqref{eq:z2toy}, \eqref{eq:z1z2} and dividing by \eqref{eq:etoy} we get
\begin{equation}\label{sumjsmallerk}
\begin{split}
G(e,e|r)^4\frac{G(e,z_1|r)G(z_1,z_2|r)G(z_2,y|r)}{G(e,y|r)}=& \ \frac{G(e,w_1|r)G(w_1,s_j|r)}{G(e,s_j|r)}\frac{G(e,w_2|r)G(w_2,s_k|r)}{G(e,s_k|r)} \cdot \\
&\hspace{.1cm}G(e,z'_1|r)G(z'_1,e|r)G(e,z'_2|r)G(z'_2,e|r).
\end{split}
\end{equation}

We now sum~(\ref{sumjsmallerk}) over all $z_1$ and $z_2$ such that $j<k$.
This is equivalent to summing over $k$ and then over $j<k$, $w_1$ in the same free factor as $s_j$, $z'_1$ that start with a letter in the other free factor than $s_j$, $w_2$ in the same free factor as $s_k$, and $z'_2$ that start with a letter in the other free factor than $s_k$.
For $\ell = 1,...,n$ we set $\Gamma_{i_{\ell}}$ the free factor to which such that $s_{\ell}$ belongs to, and we write $\Gamma_{i_{\ell}}^{\perp}$ for the set of elements that start with a letter not in $\Gamma_{i_{\ell}}$.
By changing summation as above we obtain that the following expression is equal to the one obtained by summing equation \eqref{sumjsmallerk} over $z_1,z_2$, $j<k$
\begin{align*}
\sum_{k\leq n}\sum_{j<k}&\sum_{w_1\in \Gamma_{i_j}}\frac{G(e,w_1|r)G(w_1,s_j|r)}{G(e,s_j|r)}\sum_{w_2\in \Gamma_{i_k}}\frac{G(e,w_2|r)G(w_2,s_k|r)}{G(e,s_k|r)} \cdot \\
&\hspace{.5cm}\sum_{z'_1\in \Gamma_{i_j}^\perp}G(e,z_1'|r)G(z'_1,e|r)\sum_{z'_2\in \Gamma_{i_k}^\perp}G(e,z_2'|r)G(z'_2,e|r).
\end{align*}

\begin{figure}
\begin{center}
\begin{tikzpicture}[scale=2.2]
\draw (-1.8,.5)--(-2,-.4);
\draw (2.2,.5)--(2,-.4);
\draw (-2,-.4)--(2,-.4);
\draw (-.2,-1.3)--(-.2,-.4);
\draw[dotted] (-.2,-.4)--(-.2,0);
\draw (-.2,0)--(.4,-.2);

\draw (-.2,0)--(.2,.4);

\draw[dotted] (.4,-.2)--(.4,-.4);
\draw (.4,-.4)--(.4,-1);
\draw (.4,-.2)--(.2,.4);
\draw (.2,.4)--(.2,1);
\draw[dotted] (.2,1)--(.2,1.4);
\draw (-2.3,1.9)--(-2.5,1);
\draw (.7,1.9)--(.5,1);
\draw (-2.5,1)--(.5,1);
\draw (.2,1.4)--(-1.6,1.7);
\draw (-1.6,1.7)--(-.5,1.3);
\draw (-.5,1.3)--(.2,1.4);
\draw (-1.6,1.7)--(-1.6,2.5);
\draw (-.5,1.3)--(-.5,2.3);
\draw (-.5,-1.3) node{$e$};
\draw (-.4,.05) node{$y_{k-1}$};
\draw (.1,.45) node{$y_{k}$};
\draw (.7,-.15) node{$y_{k-1}w_2$};
\draw (.6,-1.1) node{$z_2$};
\draw (-1.7,2.5) node{$z_1$};
\draw (-1.95, 1.7) node{$y_{j-1}w_1$};
\draw (.4,1.4) node{$y_{j-1}$};
\draw (-.45,1.2) node{$y_j$};
\draw (-.4,2.3) node{$y$};
\end{tikzpicture}
\end{center}
\caption{} \label{fig:4}
\end{figure}

Since the random walk is convergent we know that $I^{(1)}(e,e|R)$ is finite. Consequently, the two last inner sums in equation \eqref{sumjsmallerk} over $z_1'$ and $z_2'$ converge to a finite limit as $r$ tends to $R$.
Next, by \cite[Proposition~6.3]{DG21}, the sums over $w_1$ and $w_2$ also converge to a finite limit as $r$ tends to $R$.
We conclude that the sum over all $z_1$ and $z_2$ such that $j<k$ converges to a finite limit as $r$ tends to $R$.

Second, if we assume that $j>k$ a similar analysis (see Figure \ref{fig:4}) yields

\begin{equation}\label{sumjbiggerk}
\begin{split}
G(e,e|r)^8\frac{G(e,z_1|r)G(z_1,z_2|r)G(z_2,y|r)}{G(e,y|r)}=& \ G(e,w_1|r)G(w_1,e|r)G(s_k,w_2|r)G(w_2,s_k|r)\\
&\hspace{.1cm}G(e,z'_1|r)G(z'_1,e|r)G(e,z'_2|r)G(z'_2,e|r).\\
&\hspace{.1cm}G(y_k,y_{j-1}|r)G(y_{j-1},y_k|r)
\end{split}
\end{equation}
Summing the left hand side of equation \eqref{sumjbiggerk} over $z_1$ and $z_2$ such that $j > k$ is equivalent to summing the right hand side over $j>k$, $w_1$, $w_2$, $z'_1$ and $z_2'$ as before, and we again find that this quantity converges to a finite limit as $r$ tends to $R$.

Thus, we are left with analyzing the case where $j=k$.
\begin{figure}
\begin{center}
\begin{tikzpicture}[scale=1.8]
\draw (-1.8,.5)--(-2,-.4);
\draw (2.2,.5)--(2,-.4);
\draw (-2,-.4)--(2,-.4);
\draw (-.7,-.8)--(-.7,-.4);
\draw[dotted] (-.7,-.4)--(-.7,0);
\draw (-.7,-.8)--(-2.3,-.8);
\draw (-.7,0)--(.8,-.1);
\draw (-.7,0)--(-.2,.3);
\draw (-.2,.3)--(.5,.3);
\draw[dotted] (.8,-.1)--(.8,-.4);
\draw (.8,-.4)--(.8,-1);
\draw (.8,-1)--(1.6,-1);
\draw (.5,.3)--(.5,1.1);
\draw (.8,-.1)--(.5,.3);
\draw (-.2,.3)--(-.2,1.1);
\draw (-1,0)node{$y_{k-1}$};
\draw (-2.5,-.8) node{$e$};
\draw (1.1,-.15) node{$y_{k}$};
\draw (1.8,-1) node{$y$};
\draw (.5,1.3) node{$z_2$};
\draw (-.2,1.3) node{$z_1$};
\draw (-.55,.4) node{$y_{k-1}w_1$};
\draw (.9,.4) node{$y_{k-1}w_2$};
\end{tikzpicture}
\end{center}
\caption{} \label{fig:5}
\end{figure}
This time, similarly to before, following paths in Figure \ref{fig:5} we obtain
\begin{equation}\label{sumjequalk}
\begin{split}
G(e,e|r)^4\frac{G(e,z_1|r)G(z_1,z_2|r)G(z_2,y|r)}{G(e,y|r)}=& \ \frac{G(e,w_1|r)G(w_1,w_2|r)G(w_2,s_k|r)}{G(e,s_k|r)} \cdot \\
&\hspace{.1cm}G(e,z'_1|r)G(z'_1,e|r)G(e,z'_2|r)G(z'_2,e|r).
\end{split}
\end{equation}
As before, summing over $z_1$ and $z_2$ boils down to summing over $k$, $w_1,w_2\in \Gamma_{i_k}$ and $z_1,z_2\in \Gamma_{i_k}^\perp$.
Hence, we get that the expression obtained by summing equation \eqref{sumjequalk} is equal to
\begin{align*}
\sum_{k\leq n}&\sum_{w_1,w_2\in \Gamma_{i_k}}\frac{G(e,w_1|r)G(w_1,w_2|r)G(w_2,s_k|r)}{G(e,s_k|r)}\\
&\hspace{.5cm}\sum_{z'_1,z'_2\in \Gamma_{i_k}^\perp}G(e,z_1'|r)G(z'_1,e|r)G(e,z_2'|r)G(z'_2,e|r).
\end{align*}
We use again that the random walk is convergent to deduce that the inner sums over $z_1'$ and $z_2'$ converge to a finite limit as $r$ tends to $R$.

Thus, we need only focus on the sum over $w_1$ and $w_2$ given by
\begin{align*}
\sum_{k\leq n}&\sum_{w_1,w_2\in \Gamma_{i_k}}\frac{G(e,w_1|r)G(w_1,w_2|r)G(w_2,s_k|r)}{G(e,s_k|r)}.
\end{align*}
By~(\ref{formulaWoess}), we have
$$\sum_{w_1,w_2\in \Gamma_{i_k}}\frac{G(e,w_1|r)G(w_1,w_2|r)G(w_2,s_k|r)}{G(e,s_k|r)}=\frac{G(e,e|r)^2}{G_{i_k}(e,e|\zeta_{i_k}(r))^2}\frac{I_{i_k}^{(2)}(e,s_k|\zeta_{i_k}(r))}{G_{i_k}(e,s_k|\zeta_{i_k}(r))}.$$
We set
$$\alpha_k(r):=\frac{G(e,e|r)^2}{G_{i_k}(e,e|\zeta_{i_k}(r))^2}\frac{1}{G(e,e|r)^4}\sum_{z'_1,z'_2\in \Gamma_{i_k}^\perp}G(e,z_1'|r)G(z'_1,e|r)G(e,z_2'|r)G(z'_2,e|r)$$ and
$$\Tilde{\Phi}_r^{(2)}(e,y):=\sum_{k=1}^n\alpha_k(r)\frac{I_{i_k}^{(2)}(e,s_k|\zeta_{i_k}(r))}{G_{i_k}(e,s_k|\zeta_{i_k}(r))},$$
Thus, by summing $z_1$ and $z_2$ in the left hand side of equation \eqref{sumjequalk} in the case where $j=k$ together with the convergence of appropriate summations of the expression in equations \eqref{sumjsmallerk} for $j<k$ and \eqref{sumjbiggerk} for $j>k$ we find that
$$\Phi_r^{(2)}(e,y)=\Tilde{\Phi}_r^{(2)}(e,y)+O(1), r\to R,$$
and the coefficients $\alpha_k(r)$ converge to some finite positive limit $\alpha_k(R)$ as $r$ tends to $R$.
Now, the random walk is spectrally degenerate along $\Gamma_1=\mathbb Z^5$, hence $I^{(2)}_1(e,g|\zeta_1(R))$ is infinite for $g\in \Gamma_1$.
Therefore, as long as the normal form of $y$ has one letter in $\Gamma_1$ (which necessarily happens as soon as it has more than one letter), we get that $\Tilde{\Phi}_r^{(2)}(e,y)$ goes to infinity as $r$ tends to $R$. Hence, we see that for fixed $y\in \Gamma$, as $r$ tends to $R$ we have  that
\begin{equation}\label{phiandtildephi}
    \Phi_r^{(2)}(e,y)\sim \Tilde{\Phi}_r^{(2)}(e,y),
\end{equation}
which may not be uniform in $y$.

We now consider $\Phi_r^{(2)}(x,y)$.
By writing $x^{-1}y$ in its normal form $x^{-1}y=t_1...t_m$ with $t_i\in \mathbb Z^5$ or $t_i\in \mathbb Z$, we may reduce to the case where $x=e$. Hence, similarly we get that
$$\tilde{\Phi}_r^{(2)}(x,y)=\sum_{k=1}^m\beta_k(r)\frac{I_{i_k}^{(2)}(e,t_k|\zeta_{i_k}(r))}{G_{i_k}(e,t_k|\zeta_{i_k}(r))}$$
with coefficients $\beta_k$ such that as $r\to R$, $\beta_k(r)$ converges to a finite limit and
$$\Phi_r^{(2)}(x,y)\sim \Tilde{\Phi}_r^{(2)}(x,y).$$

We now let $y_{l-1}$ be the largest common prefix of $x$ and $y$ and again write $x^{-1}y=t_1...t_m$ in its normal form.
\begin{figure}
\begin{center}
\begin{tikzpicture}[scale=1.8]
\draw (-1.8,.5)--(-2,-.4);
\draw (2.2,.5)--(2,-.4);
\draw (-2,-.4)--(2,-.4);
\draw (-.7,-.8)--(-.7,-.4);
\draw[dotted] (-.7,-.4)--(-.7,0);
\draw (-.7,-.8)--(-2.3,-.8);
\draw (-.7,0)--(.8,-.1);
\draw (-.7,0)--(.5,.3);
\draw[dotted] (.8,-.1)--(.8,-.4);
\draw (.8,-.4)--(.8,-1);
\draw (.8,-1)--(1.6,-1);
\draw (.5,.3)--(.5,1.1);
\draw (.8,-.1)--(.5,.3);
\draw (-1,0)node{$y_{l-1}$};
\draw (-2.5,-.8) node{$e$};
\draw (1.1,-.15) node{$y_l$};
\draw (1.8,-1) node{$y$};
\draw (.5,1.3) node{$x$};
\end{tikzpicture}
\end{center}
\caption{} \label{fig:6}
\end{figure}
In particular, $s_k=t_{k+m-n}$ and $\alpha_k=\beta_{k+m-n}$ for $k\geq l+1$.
Consequently, the terms in $\Tilde{\Phi}_r^{(2)}(e,y)$ and $\Tilde{\Phi}_r^{(2)}(x,y)$ eventually simplify each others and so as $r \to R$ we get
$$\Phi_r^{(2)}(x,y)-\Phi_r^{(2)}(e,y)=\Tilde{\Phi}_r^{(2)}(x,y_l)-\Tilde{\Phi}_r^{(2)}(e,y_l)+O(1).$$
Indeed, this is exactly because setting $k_0=m-n$, we have $s_k=t_{k+k_0}$ and $\alpha_k=\beta_{k+k_0}$ for $k\geq l+1$, so that
$$
\alpha_k \frac{I_{i_k}^{(2)}(e,s_k|\zeta_{i_k}(r))}{G_{i_k}(e,s_k|\zeta_{i_k}(r))}=\beta_{k+k_0} \frac{I_{i_{k+k_0}}^{(2)}(e,t_{k+k_0}|\zeta_{i_{k+k_0}}(r))}{G_{i_{k+k_0}}(e,t_{k+k_0}|\zeta_{i_{k+k_0}}(r))}.
$$ 
Consequently, the terms after $k=l+1$ cancel each other when taking the difference.

Since we have,
$$\frac{H(x,y)}{K_R(x,y)}-1=\lim_{r\to R}\frac{\Phi_r^{(2)}(x,y)-\Phi_r^{(2)}(e,y)}{\Phi_r^{(2)}(x,y)}=\lim_{r\to R}\frac{\Tilde{\Phi}_r^{(2)}(x,y_l)-\Tilde{\Phi}_r^{(2)}(e,y_l)}{\Tilde \Phi_r^{(2)}(e,y)},$$
in order to conclude the proof we need only show that
\begin{equation}\label{e:conclusionproof7.1}
\lim_{y\to \xi}\lim_{r\to R}\frac{\Tilde{\Phi}_r^{(2)}(x,y_l)-\Tilde{\Phi}_r^{(2)}(e,y_l)}{\Tilde \Phi_r^{(2)}(e,y)}=0.
\end{equation}

It follows from \cite[Theorem~13.10]{Woe00} that for every $g\in \Gamma_1=\mathbb Z^5$,
\begin{equation} \label{eq:z^dllt}
\bigg |C_0n^{5/2}\mu_{1}^{(n)}(g)-\mathrm{e}^{-\frac{\Sigma(g)}{2n}}\bigg|\underset{n\to \infty}{\longrightarrow}0,
\end{equation}
where $C_0>0$ is a constant and $\Sigma$ is a quadratic form associated with the random walk on $\mathbb Z^5$, and the convergence is uniform in $g \in \mathbb{Z}^5$.
Consequently, since $1= R_1 > \zeta_1(r)^n$, we get that
$$
\bigg |C_0n^{2}\mu_1^{(n)}(g)\zeta_1(r)^n-\frac{1}{\sqrt n}\zeta_1(r)^n\mathrm{e}^{-\frac{\Sigma(g)}{2n}}\bigg|\underset{n\to \infty}{\longrightarrow}0.
$$
Thus, by Karamata's Tauberian theorem \cite[Corollary 1.7.3]{BGT87} we get that as $r\to R$
$$
G_1^{(2)}(e,g|\zeta_1(r))\sim C_g \cdot \sum_{n\geq 1}\frac{1}{\sqrt n}\zeta_1(r)^n\mathrm{e}^{-\frac{\Sigma(g)}{2n}}\sim C'_g (1-\zeta_1(r))^{-1/2},
$$
where $C_g$ and $C_g'$ depend only on $g$. Since $G_1^{(2)}$ is roughly the same as $I_1^{(2)}$, we deduce that
\begin{equation}\label{e:I2sqrt1-zeta}
I_1^{(2)}(e,e|\zeta_1(r))\asymp (1-\zeta_1(r))^{-1/2},
\end{equation}
where the implicit constant is independent of $r$.
More generally, as $r\to R$ we have
\begin{equation}\label{e:I2sqrt1-zetaanyg}
I_1^{(2)}(e,g|\zeta_1(r))\asymp (1-\zeta_1(r))^{-1/2},
\end{equation}
where the implicit constant only depends on $g$.

Now, for any $g,h\in \Gamma_1=\mathbb Z^5$, we have
$$G_1(h,g|\zeta_1(r))\geq C_1 G_1(h,e|\zeta_1(r))G_1(e,g|\zeta_1(r))$$
where the constant $C_1$ is independent of $g,h$ and $r$.
We deduce that
\begin{align*}
I_1^{(2)}(e,g|\zeta_1(r))&\ =\sum_{w_1,w_2\in \Gamma_1}G(e,w_1|\zeta_1(r))G(w_1,w_2|\zeta_2(r))G(w_2,g|\zeta_2(r))\\
&\ \geq C_1 \cdot I_1^{(2)}(e,e|\zeta_1(r))G(e,g|\zeta_1(r)).
\end{align*}
Consequently, letting $k$ be such that $i_k=1$, so that $s_k\in \Gamma_1 = \mathbb Z^5$,
we have
$$I_{1}^{(2)}(e,e|\zeta_{1}(r))\lesssim \frac{I_{1}^{(2)}(e,s_k|\zeta_{1}(r))}{G_{1}(e,s_k|\zeta_1(r))}$$
where the implicit constant is independent of $s_k$ and $r$.
Thus, by equation \eqref{e:I2sqrt1-zeta},
$$(1-\zeta_1(r))^{-1/2}\lesssim \frac{I_{1}^{(2)}(e,s_k|\zeta_{1}(r))}{G_{1}(e,s_k|\zeta_1(r))}.$$
Also, by~(\ref{e:I2sqrt1-zetaanyg}),
$$\frac{I_{1}^{(2)}(e,s_k|\zeta_{1}(r))}{G_{1}(e,s_k|\zeta_1(r))}\lesssim (1-\zeta_1(r))^{-1/2}C_k,$$
where $C_k$ only depends on $s_k$.
We conclude that
\begin{equation}\label{e:timektildephisqrt1-zetaZ5}
1\lesssim (1-\zeta_1(r))^{1/2}\frac{I_{i_k}^{(2)}(e,s_k|\zeta_{i_k}(r))}{G_{i_k}(e,s_k|\zeta_{i_k}(r))}\lesssim C_k.
\end{equation}

On the other hand, the random walk is not spectrally degenerate along $\Gamma_2=\mathbb Z$.
Thus, if $i_k=2$, then
$$\frac{I_{2}^{(2)}(e,s_k|\zeta_{2}(R))}{G_{2}(e,s_k|\zeta_2(R))}$$
is finite, hence
\begin{equation}\label{e:timektildephisqrt1-zetaZ}
    (1-\zeta_1(r))^{1/2}\frac{I_{i_k}^{(2)}(e,s_k|\zeta_{i_k}(r))}{G_{i_k}(e,s_k|\zeta_{i_k}(r))}\underset{r\to R}{\longrightarrow}0
\end{equation}

Finally, let $\xi$ be a conical limit point. In our context of free products, this means that $\xi$ is an infinite word, and $y$ converges to $\xi$ if and only if for every $k$, the prefix of length $k$ of $y$ in its normal form eventually agrees with the prefix of length $k$ of $\xi$.
We deduce that for fixed $x$, the largest common prefix $y_{l-1}$ of $x$ and $y$, as well as its successor $y_l$ only depend on $\xi$ and not on $y$, as soon as $|y|$ is large enough.
Combining~(\ref{e:timektildephisqrt1-zetaZ5}) and~(\ref{e:timektildephisqrt1-zetaZ}), we get
$$(1-\zeta_1(r))^{1/2}\big|\Tilde{\Phi}_r^{(2)}(x,y_{l})-\Tilde{\Phi}_r^{(2)}(e,y_{l})\big|\asymp C(x,\xi),$$
where $C(x,\xi)$ is a finite constant only depending on $x$ and $\xi$.

Also, as $y$ converges to $\xi$, we get that $i_k=1$ occurs infinitely many times, that is $s_k\in \Gamma_1$ for arbitrarily many $k$,
hence by~(\ref{e:I2sqrt1-zeta})
$$\liminf_{r\to R}(1-\zeta_1(r))^{1/2}\Tilde{\Phi}_r^{(2)}(e,y)\underset{|y|\to \infty}{\longrightarrow}\infty.$$
Therefore,
$$\lim_{r\to R}\frac{\Tilde{\Phi}_r^{(2)}(x,y_{l})-\Tilde{\Phi}_r^{(2)}(e,y_{l})}{\Tilde \Phi_r^{(2)}(e,y)}=\lim_{r\to R}\frac{(1-\zeta(r))^{1/2}\big (\Tilde{\Phi}_r^{(2)}(x,y_{l})-\Tilde{\Phi}_r^{(2)}(e,y_{l})\big)}{(1-\zeta(r))^{1/2}\Tilde \Phi_r^{(2)}(e,y)}$$
converges to 0 as $|y|$ tends to infinity, which proves~(\ref{e:conclusionproof7.1}) and thus concludes the proof.
\end{proof}

\begin{Exl} \label{ex:non-home-R-mart}
Consider Example~\ref{ex:limitprobmeasures}, where we constructed two probability measures $\mu$ and $\mu'$ on $\Gamma=\mathbb Z^5*\mathbb Z$ such that $\mu$ is convergent and spectrally degenerate along $\mathbb Z^5$ and $\mu'$ is spectrally non-degenerate. Then the $R$-Martin boundaries $\partial_{M,R}(\Gamma,\mu)$ and $\partial_{M,R}(\Gamma,\mu')$ associated with $\mu$ and $\mu'$ are not homeomorphic. 

Note first that since the parabolic subgroups $\mathbb{Z}^5$ and $\mathbb{Z}$ are virtually abelian, by \cite[Theorem~1.4]{DG21} the $R$-Martin boundaries are minimal, so that $\overline{\partial_{M,R}^m\Gamma}=\partial_{M,R}\Gamma$ for both $\mu$ and $\mu'$.

Now, since $\mathbb Z$ is hyperbolic, we have that $\Gamma$ is also hyperbolic relative (only) to $\mathbb Z^5$. In fact, $\Gamma$ is the HNN-extension of $\mathbb Z^5$ over the trivial subgroup. Following \cite{Bow12}, with this relatively hyperbolic structure the Bowditch boundary consists of the set of infinite words with normal form for the HNN-extension structure glued together with one point at infinity for every left coset of $\mathbb Z^5$.
In particular, as $\mathbb Z^5$ is one-ended, we see that the Bowditch boundary coincides with the set of ends.
Thus, by \cite{Hop44} the Bowditch boundary is homeomorphic to the Cantor set and is in particular totally disconnected.

Now, on the one hand, the measure $\mu$ is spectrally degenerate along $\mathbb Z^5$, so by \cite[Theorem~1.2]{DG21} the $R$-Martin boundary is homeomorphic to the Bowditch boundary, hence it is totally disconnected.
On the other hand, the measure $\mu'$ is spectrally non-degenerate, so \cite[Theorem~1.2]{DG21} shows that the $R$-Martin boundary contains embedded 4-spheres. Therefore, the $R$-Martin boundaries for $\mu$ and $\mu'$ cannot be homeomorphic.
\end{Exl}

Next, we show that the ratio-limit boundaries generally depend on the random walk.

\begin{Exl} \label{ex:non-iso-rl-bndry}
Let $\mu$ and $\mu'$ be probability measures on $\Gamma = \mathbb{Z}^5 * \mathbb{Z}$ as in Example \ref{ex:non-home-R-mart} so that $\mu$ is convergent and spectrally degenerate along $\mathbb{Z}^5$ and $\mu'$ is spectrally non-degenerate. Then the ratio-limit boundaries $\partial_{\rho}(\Gamma,\mu)$ and $\partial_{\rho}(\Gamma,\mu')$ associated with $\mu$ and $\mu'$ are not $\Gamma$-equivariantly homeomorphic.

Indeed, by Proposition~\ref{p:conical-in-rlb} and Proposition~\ref{p:conicalconvergentcase} for $\mu$ and $\mu'$ respectively, we know that
$$\frac{H(x,y)}{K_R(x,y)}\to 1$$
as $y$ converges to a conical limit point. Now, as in the proof of Corollary~\ref{cor:embedding}, we deduce for both $\mu$ and $\mu'$ that there is a bi-Lipschitz $\Gamma$-equivariant map
$$\iota : \partial_{M,R}\Gamma \rightarrow \partial_{\rho} \Gamma$$ which is a homeomorphism onto its closed image, such that for every conical limit point $\xi$ we have
$$H(x,\iota(\xi)) = K_R(x,\xi).$$

The proof of Theorem~\ref{t:essent-min} can then be applied to show that the $R$-Martin boundary (which coincides with the closure of minimal points in the $R$-Martin boundary) is the unique smallest closed $\Gamma$-invariant subspace of the ratio limit boundary. Thus, if the ratio-limit boundaries were $\Gamma$-equivariantly homeomorphic, it would follow that so are their unique smallest closed $\Gamma$-invariant subspaces, in contradiction to Example \ref{ex:non-home-R-mart}.
\end{Exl}

\section{Co-universal quotients of Toeplitz C*-algebras.} \label{s:co-univ}

Our goal in this final section is to identify a unique smallest equivariant quotient of Toeplitz C*-algebra of a random walk, when $\Gamma$ is relatively hyperbolic, and $\mu$ is a finitely supported, admissible, aperiodic and symmetric probability measure on $\Gamma$ which is spectrally non-degenerate.

Throughout this section, we will denote Hilbert spaces by $\mathcal{H}$, and by $\mathbb{B}(\mathcal{H})$ and $\mathbb{K}(\mathcal{H})$ the C*-algebras of all bounded and compact operators on $\mathcal{H}$, respectively. When referring to an ideal $\mathcal{I} \lhd \mathcal{A}$ in a C*-algebra $\mathcal{A}$, we will always mean a two-sided norm-closed ideal.

The following notion of co-universality is the one that fits our context. It is similar in spirit to other such notions that appear in the literature \cite{CLSV11, DK20, Seh22}, particularly \cite[Definition 4.7]{DKKLL22}, but it does not require injectivity on some specified subalgebra.

Recall that if $\mathcal{A}$ be a C*-algebra, a $G$-action $\alpha: G \curvearrowright \mathcal{A}$ is a homomorphism $G \rightarrow \Aut(\mathcal{A})$ such that $\alpha_g$ is a $*$-automorphism, and for every element $a\in \mathcal{A}$ the function $g\mapsto \alpha_g(a)$ is norm continuous. When $\mathcal{I} \lhd \mathcal{A}$ is a $G$-invariant ideal, we get an induced action $\alpha^{\mathcal{I}}: G \curvearrowright \mathcal{A} / \mathcal{I}$ given by $\alpha_g^{\mathcal{I}}(a+\mathcal{I}) = \alpha_g(a) + \mathcal{I}$, which is still point-norm continuous. 

\begin{Def}
Let $\mathcal{A}$ be a C*-algebra, and $G$ a locally compact Hausdorff group. Suppose that $\alpha: G \curvearrowright \mathcal{A}$ is an action by $G$. Let $\mathcal{I}$ be an $G$-invariant ideal in $\mathcal{A}$. We say that $\mathcal{C}:=\mathcal{A} / \mathcal{I}$ is \emph{$G$ co-universal} if for every $*$-representation $\pi : \mathcal{A} \rightarrow \mathbb{B}(\mathcal{H})$ such that
\begin{enumerate}
    \item there is a group action $\beta : G \curvearrowright \mathcal{B}$, where $\mathcal{B}:=\pi(\mathcal{A})$ and;
    \item $\pi : \mathcal{A} \rightarrow \mathcal{B}$ is $G$-equivariant,
\end{enumerate}
there is a $G$-equivariant surjective $*$-homomorphism $\tau_{\pi} : \mathcal{B} \rightarrow\mathcal{C}$.
\end{Def}

The following shows that the above notion of co-universality coincides with the existence of a unique smallest $G$-equivariant quotient of $\mathcal{A}$.

\begin{Prop} \label{p:co-univ-ideal}
Let $\mathcal{A}$ be a C*-algebra, and $G$ a locally compact Hausdorff group, with an action $\alpha : G \curvearrowright \mathcal{A}$. Let $\mathcal{I}$ be a $G$-invariant ideal. Then the quotient $\mathcal{C} := \mathcal{A} / \mathcal{I}$ is $G$ co-universal if and only if $\mathcal{I}$ is the unique largest $G$-invariant ideal of $\mathcal{A}$.
\end{Prop}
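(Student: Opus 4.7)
The plan is to handle both implications by exploiting a single structural observation: for any $G$-equivariant surjective $*$-homomorphism $\pi : \mathcal{A} \to \mathcal{B}$ (where $\mathcal{B}$ carries a compatible $G$-action $\beta$), the kernel $\ker(\pi)$ is automatically a $G$-invariant ideal of $\mathcal{A}$, since for $a \in \ker(\pi)$ and $g \in G$ we have $\pi(\alpha_g(a)) = \beta_g(\pi(a)) = 0$. Conversely, every $G$-invariant ideal $\mathcal{J} \lhd \mathcal{A}$ arises as the kernel of the canonical $G$-equivariant quotient $\pi_{\mathcal{J}} : \mathcal{A} \to \mathcal{A}/\mathcal{J}$. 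I will read the co-universal map $\tau_{\pi}$ in the natural way, as the map satisfying $\tau_{\pi} \circ \pi = \pi_{\mathcal{I}}$; this is the standard convention and is the sense in which $\tau_\pi$ is used subsequently in the proof of Theorem~\ref{t:co-universalquotient}.

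For the direction $(\Leftarrow)$ I will assume that $\mathcal{I}$ is the largest $G$-invariant ideal of $\mathcal{A}$. Given any $G$-equivariant $\pi : \mathcal{A} \to \mathbb{B}(\mathcal{H})$ with image $\mathcal{B}$, the observation above gives $\ker(\pi) \subseteq \mathcal{I}$, so the universal property of quotients produces a well-defined $*$-homomorphism $\tau_{\pi} : \mathcal{B} \to \mathcal{A}/\mathcal{I}$ with $\tau_{\pi} \circ \pi = \pi_{\mathcal{I}}$. Surjectivity of $\tau_{\pi}$ is inherited from that of $\pi_{\mathcal{I}}$, and $G$-equivariance follows from the $G$-equivariance of $\pi$ and $\pi_{\mathcal{I}}$ together with surjectivity of $\pi$ onto $\mathcal{B}$.

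For the direction $(\Rightarrow)$ I will assume $\mathcal{C} = \mathcal{A}/\mathcal{I}$ is $G$ co-universal and take an arbitrary $G$-invariant ideal $\mathcal{J} \lhd \mathcal{A}$. Applying co-universality to $\pi_{\mathcal{J}}$ yields a $G$-equivariant surjection $\tau : \mathcal{A}/\mathcal{J} \to \mathcal{A}/\mathcal{I}$ with $\tau \circ \pi_{\mathcal{J}} = \pi_{\mathcal{I}}$, and hence $\mathcal{J} = \ker(\pi_{\mathcal{J}}) \subseteq \ker(\pi_{\mathcal{I}}) = \mathcal{I}$. Uniqueness of a largest $G$-invariant ideal is then automatic, since the closed sum of any family of $G$-invariant ideals is again a $G$-invariant ideal.

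The only delicate point is the convention on $\tau_{\pi}$ noted above. If one interpreted co-universality with merely some non-canonical equivariant surjection of $\mathcal{B}$ onto $\mathcal{C}$, then the forward direction would just produce a surjection $\mathcal{A}/\mathcal{J} \to \mathcal{A}/\mathcal{I}$, which does not on its own force $\mathcal{J} \subseteq \mathcal{I}$, since two distinct $G$-invariant ideals can produce equivariantly isomorphic quotients. Once the natural compatibility $\tau_{\pi} \circ \pi = \pi_{\mathcal{I}}$ is built into the definition, both implications reduce to the first isomorphism theorem applied equivariantly.
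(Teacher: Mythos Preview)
Your proof is correct and follows essentially the same route as the paper: both directions are reduced to the first isomorphism theorem together with the observation that kernels of $G$-equivariant representations are precisely the $G$-invariant ideals. The paper's proof of $(\Rightarrow)$ likewise applies co-universality to the quotient map $q_{\mathcal{J}}$ and concludes $\mathcal{J}\subseteq\mathcal{I}$ from the existence of $\tau_{\mathcal{J}}$.

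Your explicit discussion of the compatibility $\tau_\pi\circ\pi=\pi_{\mathcal{I}}$ is in fact more careful than the paper's own treatment: the paper's definition of co-universality only asserts existence of \emph{some} $G$-equivariant surjection $\tau_\pi:\mathcal{B}\to\mathcal{C}$, and its proof then writes ``which implies that $\mathcal{J}\subseteq\mathcal{I}$'' without further comment. As you correctly note, mere existence of an equivariant surjection $\mathcal{A}/\mathcal{J}\to\mathcal{A}/\mathcal{I}$ does not force $\mathcal{J}\subseteq\mathcal{I}$ (e.g.\ take $\mathcal{A}=C[0,1]\oplus C[0,1]$ with trivial $G$, $\mathcal{J}$ the first summand, $\mathcal{I}$ the second). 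So the paper is tacitly reading the definition exactly as you do, and your final paragraph makes explicit what the paper leaves implicit. Given this reading, both proofs are the same.
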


\begin{proof}
Suppose first that $\mathcal{C}$ is $G$ co-universal, and let $\mathcal{J} \lhd \mathcal{A}$ be some $G$-invariant ideal. Then the surjective $*$-homomorphism $q_{\mathcal{J}} :\mathcal{A} \rightarrow \mathcal{A} / \mathcal{J}$ is $G$-equivariant with the induced action $\alpha^{\mathcal{J}} : G \curvearrowright \mathcal{A} / \mathcal{J}$ on the image, and by Gelfand-Naimark-Segel theorem we may embed $\mathcal{A} / \mathcal{J}$ as a norm-closed subalgebra of $\mathcal{B}(\mathcal{H})$ for some Hilbert space $\mathcal{H}$, making $q_{\mathcal{J}}$ into a genuine $*$-representation. By $G$ co-universality of $\mathcal{C}$ we have a $G$-equivariant surjective $*$-homomorphism $\tau_{\mathcal{J}} : \mathcal{A}/\mathcal{J} \rightarrow \mathcal{A}/\mathcal{I}$, which implies that $\mathcal{J}\subseteq \mathcal{I}$. Hence, $\mathcal{I}$ is the unique largest $G$-invariant ideal of $\mathcal{A}$.

Conversely, if $\mathcal{I}$ is the largest $G$-invariant ideal of $\mathcal{A}$, and $\pi : \mathcal{A} \rightarrow \mathbb{B}(\mathcal{H})$ a $G$-equivariant $*$-representation such that $\mathcal{B}:=\pi(\mathcal{A})$ admits a $G$-action $\beta:G\curvearrowright \mathcal{B}$, then the kernel $\ker \pi$ is a $G$-invariant ideal, and hence $\ker \pi \lhd \mathcal{I}$. By applying the first isomorphism theorem for C*-algebras to $\pi$, by composition we obtain a $G$-equivariant map $\tau_{\pi}: \mathcal{B} \cong \mathcal{A}/\ker \pi \rightarrow \mathcal{A} / \mathcal{I}$.
\end{proof}

Thus, in order to show that with respect to a group action $G \curvearrowright \mathcal{A}$ a $G$ co-universal quotient exists, it will suffice to show that there is a \emph{unique} smallest $G$-equivariant quotient $\mathcal{C}$. The following C*-algebras associated with Markov chains were studied by the first author and his collaborators in a sequence of papers \cite{DOM14, DOM16, CDHLZ21, DO21}. These works form part of a larger program to determine the structure of natural operator algebras arising from subproduct systems, pioneered by Shalit, Solel and Viselter \cite{SS09, Vis11, Vis12}.

Let $P$ be the transition kernel of a random walk on a discrete group $\Gamma$ induced by an admissible probability measure $\mu$ on $\Gamma$. Coarse analogues of Toeplitz C*-algebras for random walks and their subaglebras were first studied in \cite{DOM14, DOM16} (see also \cite{CDHLZ21}), and the following standard version of Toeplitz C*-algebras for random walks was defined and studied in \cite{DO21}. For $m\in \mathbb{N}$ we denote by $E(P^m) := \{ \ (x,y) \ | \ P^m(x,y) > 0 \ \}$.
The Toeplitz C*-algebras that we consider are norm closed $*$-subalgebras of bounded operators on the Hilbert space
$$
\mathcal{H}_P := \oplus_{m=0}^{\infty} \ell^2 (E(P^m))
$$
with the standard orthonormal basis $\{e_{y,z}^{(m)}\}_{(y,z)\in E(P^m)}$. Then, for any $n\in \mathbb{N}$ and $(x,y) \in E(P^n)$ we define an operator $S_{x,y}^{(n)}$ on $\mathcal{H}_P$ by specifying for $(y',z) \in E(P^m)$
$$
S_{x,y}^{(n)}(e_{y',z}^{(m)}) = \delta_{y,y'} \sqrt{\frac{P^n(x,y)P^m(y,z)}{P^{n+m}(x,z)}} e_{x,z}^{(n+m)}.
$$
For a fixed $z\in \Gamma$ we also denote $\mathcal{H}_{P,z} = \overline{\Span} \{ \ e_{y,z}^{(m)} \ | \ (y,z) \in E(P^m) \}$, which is a reducing subspace for all of the operators $S_{x,y}^{(n)}$ defined above. Thus, we see that our operators are all in the direct product $\prod_{z\in \Gamma} \mathbb{B}(\mathcal{H}_{P,z})$.

\begin{Def}
Let $P$ be a random walk on a discrete group $\Gamma$ induced by a probability measure $\mu$. The \emph{Toeplitz C*-algebra} of $(\Gamma,\mu)$ is given by
$$
\mathcal{T}(\Gamma,\mu):= C^*( \ S_{x,y}^{(n)} \ | \ (x,y) \in E(P^n), \ n\in \mathbb{N} \ ).
$$
The \emph{Cuntz C*-algebra} of $(\Gamma,\mu)$ is given by
$$
\mathcal{O}(\Gamma,\mu):= \mathcal{T}(\Gamma,\mu) / \mathcal{J}(\Gamma,\mu),
$$
where $\mathcal{J}(\Gamma,\mu) = \mathcal{T}(\Gamma,\mu) \cap \prod_{z\in \Gamma} \mathbb{K}(H_{P,z})$.
\end{Def}

These C*-algebras come equipped with natural $\Gamma \times \mathbb{T}$ actions. Namely, we have a natural unit circle action $\gamma : \mathbb{T} \curvearrowright \mathcal{T}(\Gamma,\mu)$ given by $\gamma_z(S^{(n)}_{x,y}) = z^n S^{(n)}_{x,y}$, as well as a $\Gamma$-action $\delta : \Gamma \curvearrowright \mathcal{T}(\Gamma,\mu)$ given by $\delta_g(S^{(n)}_{x,y}) = S^{(n)}_{gx,gy}$. As these actions commute, we get an action $\lambda: \Gamma\times \mathbb{T} \curvearrowright \mathcal{T}(\Gamma,\mu)$, and since $\prod_{z\in \Gamma} \mathbb{K}(H_{P,z})$ is $\Gamma\times \mathbb{T}$-invariant, we get an induced action $\overline{\lambda} : \Gamma \times \mathbb{T} \curvearrowright \mathcal{O}(\Gamma,\mu)$ which acts similarly on generators.

Recall that SRLP stands for the strong ratio limit property.
One of the main results of \cite{DO21} is the computation of $\mathcal{O}(\Gamma,\mu)$ under the assumption of SRLP. Namely, in \cite[Theorem 4.10]{DO21} it was shown that by assuming SRLP, we get $\mathcal{O}(\Gamma,\mu) \cong C(\Delta_{\rho}^r \Gamma \times \mathbb{T}) \otimes \mathbb{K}(\ell^2(\Gamma))$. Under this identification (See \cite[Section 5]{DO21}), the induced action $\overline{\lambda} : \Gamma \times \mathbb{T} \curvearrowright \mathcal{O}(\Gamma,\mu)$ is given for $f \otimes K \in C(\Delta_{\rho}^r \Gamma \times \mathbb{T}) \otimes \mathbb{K}(\ell^2(\Gamma))$ by
$$
\overline{\lambda}_{z,g}(f \otimes K) =((g,z) \cdot f) \otimes S_gKS_g^{-1}, 
$$
where $S : \Gamma \rightarrow \mathbb B(\ell^2(\Gamma))$ is the left regular representation of $\Gamma$, and for $(\alpha,w) \in \Delta_{\rho}^r \Gamma \times \mathbb{T}$, $((g,z) \cdot f)(\alpha,w) = f(g^{-1}\alpha,zw)$. %{\color{blue} Adam: fixed}

Now, suppose $\Gamma$ is non-elementary hyperoblic relative to a finite collection of subgroups $\Omega$, and $\mu$ is a finitely supported, admissible, aperiodic and symmetric probability measure on $\Gamma$. By combining Corollary \ref{cor:embedding} and Corollary \ref{cor:reduced} we obtain a $\Gamma$-equivariant injection $\iota : \overline{\partial^m_{M,R} \Gamma} \rightarrow \partial_{\rho}^r \Gamma$. Let $q_P : \mathcal{T}(\Gamma,\mu) \rightarrow \mathcal{O}(\Gamma,\mu)$ be the canonical surjection, and define the $\lambda$ invariant ideal $\mathcal{J}_{\lambda}$ of $\mathcal{T}(\Gamma,\mu)$ by setting
$$
\mathcal{J}_{\lambda} := q_P^{-1} \Big[ C([\Delta_{\rho}^r \Gamma \setminus \iota(\overline{\partial^m_{M,R} \Gamma})] \times \mathbb{T}) \otimes \mathbb{K}(\ell^2(\Gamma)) \Big].
$$

The following result extends \cite[Corollary 5.2]{DO21} to relatively hyperbolic groups with symmetric, aperiodic,  spectrally non-degenerate random walks. This partly answers the first part of \cite[Question 5.6]{DO21} for a large class of random walks on relatively hyperbolic groups in the form of co-universality.

\begin{Thm}\label{t:co-universalquotient}
Suppose $\Gamma$ is non-elementary hyperoblic relative to a finite collection of subgroups $\Omega$, and $\mu$ is aperiodic, symmetric, spectrally non-degenerate probability measure on $\Gamma$ with finite support generating $\Gamma$. Then $\mathcal{T}(\Gamma,\mu)$ admits a $\Gamma\times \mathbb{T}$ co-universal quotient which coincides with
$$
\mathcal{T}(\Gamma,\mu) / \mathcal{J}_{\lambda} \cong C(\overline{\partial^m_{M,R} \Gamma} \times \mathbb{T}) \otimes \mathbb{K}(\ell^2(\Gamma)).
$$
\end{Thm}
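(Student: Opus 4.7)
The plan is to apply Proposition~\ref{p:co-univ-ideal}, reducing the theorem to showing that $\mathcal{J}_\lambda$ is the unique largest $\Gamma \times \mathbb{T}$-invariant proper ideal of $\mathcal{T}(\Gamma,\mu)$. Under our hypotheses the random walk has SRLP by \cite{Dus22b}, so \cite[Theorem~4.10]{DO21} yields a $\Gamma \times \mathbb{T}$-equivariant isomorphism
$$
\mathcal{O}(\Gamma,\mu) \;\cong\; C(\Delta_\rho^r\Gamma \times \mathbb{T}) \otimes \mathbb{K}(\ell^2(\Gamma)).
$$
Combining the $\Gamma$-equivariant embedding $\iota$ from Corollary~\ref{cor:embedding} with the $\Gamma$-equivariant homeomorphism $\theta|_{\partial_\rho \Gamma}$ from Corollary~\ref{cor:reduced} realizes $\overline{\partial^m_{M,R}\Gamma}$ as a closed $\Gamma$-invariant subset of $\Delta_\rho^r\Gamma$, and the identification $\mathcal{T}(\Gamma,\mu)/\mathcal{J}_\lambda \cong C(\overline{\partial^m_{M,R}\Gamma} \times \mathbb{T}) \otimes \mathbb{K}(\ell^2(\Gamma))$ then follows from the third isomorphism theorem together with the Gelfand correspondence for closed ideals of $C(X) \otimes \mathbb{K}(\mathcal{H})$.

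The main step is to show that every $\Gamma \times \mathbb{T}$-invariant proper ideal $\mathcal{I} \lhd \mathcal{T}(\Gamma,\mu)$ satisfies $\mathcal{I} \subseteq \mathcal{J}_\lambda$. First I would dispose of the case $\mathcal{J}(\Gamma,\mu) \subseteq \mathcal{I}$: here $\mathcal{I}/\mathcal{J}(\Gamma,\mu)$ is a proper $\Gamma \times \mathbb{T}$-invariant ideal of $\mathcal{O}(\Gamma,\mu)$. Since $\mathbb{K}(\ell^2(\Gamma))$ is simple and the $\mathbb{T}$-action on $\Delta_\rho^r\Gamma \times \mathbb{T}$ is by translation on the second coordinate, such an ideal corresponds via Gelfand duality to an open set of the form $U \times \mathbb{T}$ for a proper $\Gamma$-invariant open $U \subsetneq \Delta_\rho^r\Gamma$. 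Density of $\Gamma/R_\mu$ in $\Delta_\rho^r\Gamma$ forces the non-empty $\Gamma$-invariant closed set $\Delta_\rho^r\Gamma \setminus U$ to lie inside $\partial_\rho^r\Gamma$, and Theorem~\ref{t:essent-min} combined with Corollary~\ref{cor:reduced} then yields $\iota(\overline{\partial^m_{M,R}\Gamma}) \subseteq \Delta_\rho^r\Gamma \setminus U$, whence $\mathcal{I} \subseteq \mathcal{J}_\lambda$.

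The main obstacle lies in the remaining case $\mathcal{J}(\Gamma,\mu) \not\subseteq \mathcal{I}$, which requires showing that $\mathcal{J}(\Gamma,\mu)$ is the unique smallest non-zero $\Gamma \times \mathbb{T}$-invariant ideal of $\mathcal{T}(\Gamma,\mu)$. I would establish $\Gamma \times \mathbb{T}$-simplicity of $\mathcal{J}(\Gamma,\mu)$ using its concrete description as a $C^*$-algebra of ``diagonal'' compact operators on $\bigoplus_{z\in \Gamma}\mathcal{H}_{P,z}$ under the free left-translation $\Gamma$-action on the index set $\Gamma$, so that any $\Gamma \times \mathbb{T}$-invariant ideal of $\mathcal{J}(\Gamma,\mu)$ corresponds to a $\Gamma$-invariant open subset of the discrete space $\Gamma$ and must therefore equal $0$ or $\mathcal{J}(\Gamma,\mu)$. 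Then if $\mathcal{I} \cap \mathcal{J}(\Gamma,\mu) = 0$, the restriction $q_P|_\mathcal{I}$ is injective and $q_P(\mathcal{I})$ is a $\Gamma \times \mathbb{T}$-invariant ideal of $\mathcal{O}(\Gamma,\mu)$; if $q_P(\mathcal{I})$ is proper, the previous paragraph yields $\mathcal{I} \subseteq \mathcal{J}_\lambda$, while the alternative $q_P(\mathcal{I}) = \mathcal{O}(\Gamma,\mu)$ can be ruled out using irreducibility of $\mathcal{T}(\Gamma,\mu)$ on each reducing subspace $\mathcal{H}_{P,z}$, as the resulting decomposition $\mathcal{T}(\Gamma,\mu) = \mathcal{I} \oplus \mathcal{J}(\Gamma,\mu)$ would force a nontrivial compact central projection in the commutant of an irreducible representation, a contradiction.
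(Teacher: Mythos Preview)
Your first two paragraphs follow the paper's argument closely and are correct: reduce via Proposition~\ref{p:co-univ-ideal} to showing $\mathcal{J}_\lambda$ is the largest proper $\Gamma\times\mathbb{T}$-invariant ideal, and for ideals $\mathcal{I}$ containing $\mathcal{J}(\Gamma,\mu)$ use the Gelfand correspondence on $\mathcal{O}(\Gamma,\mu)\cong C(\Delta_\rho^r\Gamma\times\mathbb{T})\otimes\mathbb{K}(\ell^2(\Gamma))$ together with Theorem~\ref{t:essent-min} and transitivity of $\Gamma$ on $\Gamma/R_\mu$.

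Your third paragraph, however, is considerably more elaborate than the paper's treatment of the remaining case. The paper dispenses with it in two lines: for an \emph{arbitrary} proper $\Gamma\times\mathbb{T}$-invariant ideal $\mathcal{I}$ it simply asserts that $q_P(\mathcal{I})$ is proper in $\mathcal{O}(\Gamma,\mu)$, so that $\mathcal{I}+\mathcal{J}(\Gamma,\mu)$ is a proper $\Gamma\times\mathbb{T}$-invariant ideal containing $\mathcal{J}(\Gamma,\mu)$, and then the first case yields $\mathcal{I}\subseteq\mathcal{I}+\mathcal{J}(\Gamma,\mu)\subseteq\mathcal{J}_\lambda$. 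There is no need to split into the subcases $\mathcal{I}\cap\mathcal{J}(\Gamma,\mu)=0$ versus $\neq 0$, and in particular no need to establish $\Gamma\times\mathbb{T}$-simplicity of $\mathcal{J}(\Gamma,\mu)$. Your route essentially amounts to supplying a proof of the properness assertion the paper leaves implicit: the direct-sum decomposition $\mathcal{T}(\Gamma,\mu)=\mathcal{I}\oplus\mathcal{J}(\Gamma,\mu)$ would yield a central projection whose image under each irreducible representation on the infinite-dimensional $\mathcal{H}_{P,z}$ is a compact scalar, hence zero, forcing $\mathcal{J}(\Gamma,\mu)=0$. That argument is sound and in fact justifies a step the paper glosses over, but once you have it you can skip the simplicity discussion entirely and pass directly to $\mathcal{I}+\mathcal{J}(\Gamma,\mu)$ as the paper does.
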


\begin{proof}
By Proposition~\ref{p:co-univ-ideal} it will suffice to show that $\mathcal{J}_{\lambda}$ is the largest $\Gamma \times \mathbb{T}$-invariant ideal of $\mathcal{T}(\Gamma,\mu)$. Let $\mathcal{J}$ be a proper $\Gamma \times \mathbb{T}$-invariant ideal of $\mathcal{T}(\Gamma,\mu)$, and denote by $\overline{\mathcal{J}} := q_P(\mathcal{J})$. Then there are two possibilities. 

If $\mathcal{J}(\Gamma,\mu) \subseteq \mathcal{J}$, then $\overline{\mathcal{J}}$ is a proper ideal of $\mathcal{O}(\Gamma,\mu) \cong C(\Delta_{\rho}^r \Gamma \times \mathbb{T}) \otimes \mathbb{K}(\ell^2(\Gamma))$. Thus, by Gelfand-Naimark duality, there exists an open $\Gamma \times \mathbb{T}$-invariant subset $Y \subset \Delta_{\rho}^r \Gamma \times \mathbb{T}$ such that $\overline{\mathcal{J}} = C(Y) \otimes \mathbb{K}(\ell^2(\Gamma))$. By Theorem~\ref{t:essent-min} we know that $\iota(\overline{\partial^m_{M,R} \Gamma}) \cong \overline{\partial^m_{M,R} \Gamma}$ is the unique smallest closed $\Gamma$-invariant subspace of $\partial_{\rho} \Gamma \cong \partial^r_{\rho} \Gamma$. Since the action of $\Gamma$ on $\Gamma/R_{\mu}$ is transitive, we see that $\Gamma/R_{\mu}$ has no closed $\Gamma$-invariant subsets, so that any proper closed $\Gamma$-invariant subset of $\Delta_{\rho}^r \Gamma$ must be a subset of $\partial^r_{\rho} \Gamma$. Thus, $\iota(\overline{\partial^m_{M,R} \Gamma})$ is the unique smallest closed $\Gamma$-invariant subspace of $\Delta_{\rho}^r \Gamma$.

Since $\mathbb{T} \curvearrowright \mathbb{T}$ acts minimally, we get that $\iota(\overline{\partial^m_{M,R} \Gamma}) \times \mathbb{T}$ is the unique smallest closed $\Gamma \times \mathbb{T}$-invariant subspace of $\Delta_{\rho}^r \Gamma \times \mathbb{T}$. Since $Y$ is proper, open and $\Gamma\times \mathbb{T}$ invariant, its complement in $\Delta_{\rho}^r \Gamma \times \mathbb{T}$ must contain $\iota(\overline{\partial^m_{M,R} \Gamma}) 
\times \mathbb{T}$. Thus, $Y$ is contained in $[\Delta_{\rho}^r \Gamma \setminus \iota(\overline{\partial^m_{M,R} \Gamma})] \times \mathbb{T}$, and we get that $\mathcal{J} \subseteq \mathcal{J}_{\lambda}$.

Now suppose that $\mathcal{J}$ is a general $\Gamma \times \mathbb{T}$-invariant ideal. Since $\overline{\mathcal{J}}$ is proper in $\mathcal{O}(\Gamma,\mu)$, we get that $\mathcal{J} + \mathcal{J}(\Gamma,\mu)$ is also proper in $\mathcal{T}(\Gamma,\mu)$. Hence, by the previous argument we get that $\mathcal{J} \subseteq \mathcal{J} + \mathcal{J}(\Gamma,\mu) \subseteq \mathcal{J}_{\lambda}$.
\end{proof}

Although it is unknown if all relatively hyperbolic groups admit a spectrally non-degenerate random walk, by Propositions \ref{propconstructionnonspecdeg} and \ref{propsmallhomogeneousdimension}, there are many spectrally non-degenerate adapted random walks on arbitrary free products, and any random walk on relatively hyperbolic groups with virtually nilpotent subgroups of homogeneous dimension at most $4$ is spectrally non-degenerate. Hence, beyond the class of hyperbolic groups, there is a wide class of relatively hyperbolic groups for which there always exists a random walk such that $\mathcal{T}(\Gamma,\mu)$ has a $\Gamma \times \mathbb{T}$ co-universal quotient. 

As a final application, we use of co-universality for two random walks on $\Gamma =\mathbb{Z}^5 *\mathbb{Z}$ as in Example \ref{ex:non-home-R-mart} to come up with examples of Toeplitz C*-algebras $\mathcal{T}(\Gamma,\mu)$ and $\mathcal{T}(\Gamma,\mu')$ for which no two non-trivial $\Gamma \times \mathbb{T}$-equivariant quotients are isomorphic.

\begin{Exl} \label{ex:no-gamma-t-quotient-iso}
Let $\mu$ and $\mu'$ be probability measures on $\Gamma = \mathbb{Z}^5 * \mathbb{Z}$ as in Example \ref{ex:non-home-R-mart} so that $\mu$ is convergent and spectrally degenerate along $\mathbb{Z}^5$ and $\mu'$ is spectrally non-degenerate. Suppose that $\mathcal{C}_{\mu}$ and $\mathcal{C}_{\mu'}$ are non-trivial $\Gamma \times \mathbb{T}$-equivariant quotients of $\mathcal{T}(\Gamma,\mu)$ and $\mathcal{T}(\Gamma,\mu')$ respectively. Then $\mathcal{C}_{\mu}$ and $\mathcal{C}_{\mu'}$ cannot be isomorphic.
In particular, the Toeplitz C*-algebras $\mathcal{T}(\Gamma,\mu)$ and $\mathcal{T}(\Gamma,\mu')$ as well as the Cuntz C*-algebras $\mathcal{O}(\Gamma,\mu)$ and $\mathcal{O}(\Gamma,\mu)$ are not isomorphic.

Recall first that for both $\mu$ and $\mu'$, the closures of minimal points in their $R$-Martin boundaries coincide with their whole $R$-Martin boundary. Suppose now that the quotients $C_\mu$ and $C_{\mu'}$ are $*$-isomorphic via some $*$-isomorphism $\varphi$. Then, by the same proof of Theorem \ref{t:co-universalquotient} for $\mu$ (using the same proof of Theorem \ref{t:essent-min} by appealing to Proposition \ref{p:conicalconvergentcase} instead of Proposition \ref{p:conical-in-rlb}) as well as Theorem \ref{t:co-universalquotient} for $\mu'$ there exist \emph{unique} smallest $\Gamma \times \mathbb{T}$-equiavariant quotients $q : \mathcal{C}_{\mu} \rightarrow C(\partial_{M,R} (\Gamma,\mu) \times \mathbb{T}) \otimes \mathbb{K}(\ell^2(\Gamma))$ and $q': \mathcal{C}_{\mu'} \rightarrow C(\partial_{M,R} (\Gamma,\mu') \times \mathbb{T}) \otimes \mathbb{K}(\ell^2(\Gamma))$. Since these quotients are unique, the ideals $\ker q$ and $\ker q'$ are the unique largest proper $\Gamma \times \mathbb{T}$ invariant ideals in their repsective Toeplitz algebras. Therefore, as $\varphi$ is $\Gamma \times \mathbb{T}$-equivariant, it must map $\ker q$ onto $\ker q'$. Thus, we obtain an induced $*$-isomorphism $\widetilde{\varphi} : C(\partial_{M,R} (\Gamma,\mu) \times \mathbb{T}) \otimes \mathbb{K}(\ell^2(\Gamma)) \rightarrow C(\partial_{M,R} (\Gamma,\mu') \times \mathbb{T}) \otimes \mathbb{K}(\ell^2(\Gamma))$.

By Gelfand-Naimark duality, we must then have that $\partial_{M,R} (\Gamma,\mu) \times \mathbb{T}$ and $\partial_{M,R} (\Gamma,\mu') \times \mathbb{T}$ are homeomorphic. However, in Example \ref{ex:non-home-R-mart} we saw that $\partial_{M,R} (\Gamma,\mu)$ has topological dimension $0$ while $\partial_{M,R} (\Gamma,\mu')$ has topological dimension at least $4$. Hence, we arrive at a contradiction because $\partial_{M,R} (\Gamma,\mu) \times \mathbb{T}$ is at most $1$ dimensional while $\partial_{M,R} (\Gamma,\mu') \times \mathbb{T}$ is at least $4$ dimensional.
\end{Exl}

Let us conclude with a few words about possible ways to generalize our work.
According to Corollary~\ref{c:ratio-limit-iterated-sums}~(2), whenever $\Gamma$ is hyperbolic relative to virtually abelian subgroups, for any finitely supported, admissible, aperiodic, symmetric and convergent random walk, the ratio limit kernel $H(x,y)$ can be expressed as the limit of $I^{(s)}(x,y|r)/I^{(s)}(e,y|r)$ as $r\to R$.
In this situation, beyond the specific case of Example~\ref{ex:no-gamma-t-quotient-iso}, the proof of Proposition~\ref{p:conical-in-rlb} might be adaptable to show that once again that for every $\xi$ in the Martin boundary such that $\pi(\xi)$ is conical and for every sequence $y_n$ converging to $\xi$ we still have that
$H(\cdot,y_n)/K_R(\cdot,y_n)$ converges to 1.
Consequently, the conclusions of Theorem~\ref{t:essent-min} and Theorem~\ref{t:co-universalquotient} might still hold in this situation.
This raises the question how much the assumption of being spectrally non-degenerate can be relaxed for our main results to hold.

However, as illustrated in Example~\ref{exampleallderivativefinite}, we know that there are limitations to the overall strategy we used. Hence, new techniques are necessary to prove a result analogous to Theorem~\ref{t:co-universalquotient} for any admissible, admissible, aperiodic and symmetric random walk on arbitrary relatively hyperbolic group.

%%%%%%%%%%%%%%%%%%%%%%%%%%%%%%%%%%%%%%%%%%%

\end{document}